\title{Framed instanton homology and concordance, II}
\author{John A. Baldwin}
\email{john.baldwin@bc.edu}
\address{Department of Mathematics\\Boston College}
\author{Steven Sivek}
\email{s.sivek@imperial.ac.uk}
\address{Department of Mathematics\\Imperial College London}
\thanks{JAB was supported by NSF CAREER Grant DMS-1454865 and NSF FRG Grant DMS-1952707.}
\newtheorem*{rep@theorem}{\rep@title}
\newcommand{\newreptheorem}[2]{%
\newenvironment{rep#1}[1]{%
 \def\rep@title{#2 \ref{##1}}%
 \begin{rep@theorem}}%
 {\end{rep@theorem}}}
\newtheorem {theorem}{Theorem}
\newtheorem {lemma}[theorem]{Lemma}
\newtheorem {proposition}[theorem]{Proposition}
\newtheorem {corollary}[theorem]{Corollary}
\numberwithin{equation}{section}
\numberwithin{theorem}{section}
\theoremstyle{definition}
\newtheorem{remark}[theorem]{Remark}
\newtheorem*{remark*}{Remark}
\newlist{pcases}{enumerate}{1}
\setlist[pcases]{
  label=\bf{Case~\arabic*:}\protect\thiscase.~,
  ref=\arabic*,
  align=left,
  labelsep=0pt,
  leftmargin=0pt,
  labelwidth=0pt,
  parsep=0pt
}
\newcommand{\case}[1][]{%
  \if\relax\detokenize{#1}\relax
    \def\thiscase{}%
  \else
    \def\thiscase{~#1}%
  \fi
  \item
}
\newcommand{\Z}{\mathbb{Z}}
\newcommand{\R}{\mathbb{R}}
\newcommand{\CP}{\mathbb{CP}}
\newcommand{\Q}{\mathbb{Q}}
\newcommand{\spc}{\operatorname{Spin}^c}
\newcommand{\lsl}{\mathit{sl}}
\newcommand{\maxsl}{\overline{\lsl}}
\newcommand{\rank}{\operatorname{rank}}
\newcommand{\cA}{\mathcal{A}}
\newcommand\cB{\mathcal{B}}
\newcommand\cC{\mathcal{C}}
\newcommand{\sD}{\mathscr{D}}
\newcommand{\cG}{\mathcal{G}}
\newcommand{\cptwo}{\overline{\CP^2}}
\newcommand\hfred{{\mathit{HF}_{\mathrm{red}}}}
\DeclareFontFamily{U}{mathx}{\hyphenchar\font45}
\DeclareFontShape{U}{mathx}{m}{n}{
      <5> <6> <7> <8> <9> <10>
      <10.95> <12> <14.4> <17.28> <20.74> <24.88>
      mathx10
      }{}
\DeclareSymbolFont{mathx}{U}{mathx}{m}{n}
\DeclareMathAccent{\widecheck}{0}{mathx}{"71}
\newcommand{\img}{\operatorname{Im}}
\newcommand{\hfhat}{\widehat{\mathit{HF}}}
\newcommand{\gr}{\operatorname{gr}}
\newcommand{\id}{\operatorname{id}}
\newcommand{\pt}{\mathrm{pt}}
\newcommand{\godd}{\mathrm{odd}}
\newcommand{\mirror}[1]{\overline{#1}}
\newcommand{\cinvt}{\nu^\sharp}
\newcommand{\chominvt}{\tau^\sharp}
\newcommand{\cepsilon}{\epsilon^\sharp}
\newcommand{\Kh}{\mathrm{Kh}}
\newcommand{\Khr}{\overline{\Kh}}
\newcommand{\Khodd}{\Kh'}
\newcommand{\Khoddr}{\overline{\Khodd}}
\newcommand{\dcover}{\Sigma_2}
\newcommand{\bfone}{{\bf 1}}
\tikzset{every picture/.style=thick}
\tikzset{link/.style = { white, double = black, line width = 1.75pt, double distance = 1.25pt, looseness=1.75 }}
\tikzset{crossing/.style = {draw, circle, dotted, minimum size=0.5cm, inner sep=0, outer sep=0}}
\pgfplotsset{compat=1.12}
\begin{document}

\begin{abstract}
We continue our study of the integer-valued knot invariants $\cinvt(K)$ and $r_0(K)$, which together determine the dimensions of the framed instanton homologies of all nonzero Dehn surgeries on $K$.  We  first establish a   ``conjugation" symmetry for the decomposition of cobordism maps  constructed in our earlier work, and  use this to prove,  among many other things, that $\cinvt(K)$ is always either zero or odd. We then  apply these technical results to study linear independence  in the homology cobordism group, to define an instanton Floer analogue $\cepsilon(K)$ of Hom's $\epsilon$-invariant in Heegaard Floer homology,  and to the problem of characterizing a given 3-manifold as  Dehn surgery on a knot in $S^3$.
\end{abstract}

\maketitle

\section{Introduction} \label{sec:intro}

In \cite{bs-concordance}, we defined for any knot $K \subset S^3$ a smooth concordance invariant
\[ \cinvt(K) \in \Z \]
which defines a quasi-morphism on the smooth concordance group.  This integer, together with another invariant $r_0(K) \in \Z$ which satisfies \[r_0(K) \geq |\cinvt(K)|\textrm{ and }r_0(K) \equiv \cinvt(K) \pmod{2},\] governs the framed instanton homology \cite{km-yaft} of surgeries on $K$ as follows:

\begin{theorem}[{\cite[Theorem~1.1]{bs-concordance}}] \label{thm:dim-surgery}
For any knot $K \subset S^3$, we have
\[ \dim_\Q I^\#(S^3_{p/q}(K)) = q\cdot r_0(K) + |p-q\cinvt(K)| \]
where $p$ and $q$ are relatively prime integers with $q > 0$, and where if $\cinvt(K)=0$ then $p\neq 0$.
\end{theorem}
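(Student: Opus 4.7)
The plan is to package the framed instanton homology of all surgeries on $K$ into a single algebraic device and then read off $\cinvt(K)$ and $r_0(K)$ as invariants of that device so that the dimension formula falls out by a dimension count. The starting point is the Kronheimer--Mrowka surgery exact triangle, which relates $I^\#$ of three surgeries with coprime slope labels differing by an elementary move. Iterating this triangle should give a mapping-cone description of $I^\#(S^3_{p/q}(K))$ analogous to the Ozsv\'ath--Szab\'o rational surgery formula: $I^\#(S^3_{p/q}(K))$ is computed by a two-column complex whose columns are $q$ copies of a ``knot-like'' complex $\bA(K)$ and $q$ copies of a fixed auxiliary object, with differentials given by certain cobordism maps that depend on the slope.

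The first concrete step is to understand integer surgeries $S^3_n(K)$ for large $n$. Here I expect a ``large surgery formula'' that identifies $I^\#(S^3_n(K))$ with the homology of a truncation of $\bA(K)$, from which one can extract a splitting $I^\#(S^3_n(K)) \cong T_n \oplus R$ into a ``tower'' part $T_n$ whose dimension is $|n-c|$ for some integer $c$ and a ``reduced'' part $R$ of fixed dimension. Define $\cinvt(K) := c$ and $r_0(K) := \dim R$; by construction these satisfy $r_0(K) \geq |\cinvt(K)|$ and the required mod~$2$ congruence, since the total dimension $\dim I^\#(S^3_n(K))$ has fixed parity. For general integer $n$ (including negative and small values), propagate the formula using the surgery exact triangle and induction, taking care at $n=0$, where $b_1(S^3_0(K))=1$ forces an extra reducible contribution and explains why the case $p=0$ must be excluded when $\cinvt(K)=0$.

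The extension to rational surgeries then comes from feeding the integer-surgery answer back into the rational mapping cone: $q$ copies of the tower contribute $|p - q\cinvt(K)|$ to the total dimension, while $q$ copies of the reduced part contribute $q\cdot r_0(K)$, giving the stated formula. The hardest step, and the one I would expect to occupy most of the work, is justifying the mapping-cone / large-surgery description itself, especially the claim that the ``tower'' summand really takes the form $|n-\cinvt(K)|$ for an integer $\cinvt(K)$ independent of $n$. This requires fine control over the cobordism maps between adjacent surgeries and is where one needs input beyond the bare surgery triangle; the conjugation symmetry established in the present paper is precisely the kind of additional structural constraint on these cobordism maps that the argument would rely on.
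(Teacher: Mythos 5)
The statement you are trying to prove is quoted from the earlier paper \cite{bs-concordance} (as Theorem~1.1 there); the present paper only cites it, so the proof you should compare against is the one in \cite{bs-concordance}. Your proposal takes a genuinely different route, and unfortunately the route runs into a real obstruction.

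Your plan hinges on two pieces of Heegaard-Floer-style machinery: a ``large surgery formula'' identifying $I^\#(S^3_n(K))$ with the homology of a truncated knot complex $\bA(K)$, giving a splitting into a ``tower'' of dimension $|n-c|$ plus a reduced part; and an Ozsv\'ath--Szab\'o-type rational mapping cone assembling $q$ shifted copies of $\bA(K)$. Neither tool exists in framed instanton homology. There is no instanton analogue of $\mathit{CFK}^\infty$ with a $\Z\oplus\Z$ filtration, no large-surgery isomorphism onto a subquotient, and no rational surgery mapping-cone formula for $I^\#$. This is not a detail that can be patched: the absence of exactly this structure is the central difficulty that forced the authors of \cite{bs-concordance} to argue differently, and producing such a formula would itself be a major result. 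Your proposed definitions of $\cinvt$ and $r_0$ therefore have no source from which to be extracted.

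The actual argument in \cite{bs-concordance} works with nothing more than Floer's surgery exact triangle as adapted by Scaduto, at the level of 3-manifolds and cobordism maps. For integer slopes one observes that consecutive dimensions $\dim I^\#(S^3_n(K))$ and $\dim I^\#(S^3_{n+1}(K))$ differ by exactly $\pm 1$, depending on whether the map $I^\#(X_n,\nu_n): I^\#(S^3)\to I^\#(S^3_n(K))$ in the triangle is injective or zero. One defines $N(K)$ as the least $n\ge 0$ with $I^\#(X_n,\nu_n)=0$ and sets $\cinvt(K)=N(K)-N(\mirror{K})$; a separate monotonicity argument shows the sequence $n\mapsto\dim I^\#(S^3_n(K))$ is V-shaped (or W-shaped at $n=0$) with minimum at $n=\cinvt(K)$, and $r_0(K)$ is read off as (essentially) that minimum value minus a correction. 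The rational case is then handled by induction on $q$ via Farey mediants: for $p/q = (a+c)/(b+d)$ with pairwise-distance-one slopes one has an exact triangle
\[ \dots \to I^\#(S^3_{a/b}(K)) \to I^\#(S^3_{p/q}(K)) \to I^\#(S^3_{c/d}(K)) \xrightarrow{F} \dots, \]
and the key step is showing the connecting map $F$ vanishes so that the triangle splits and dimensions add. This is a purely 3-manifold-level induction with no knot-complex bookkeeping. Note also that the conjugation symmetry of the present paper plays no role in proving Theorem~\ref{thm:dim-surgery}: that theorem predates the symmetry, and the symmetry is new input here used to prove further constraints on $\cinvt$ (such as its parity), not the dimension formula itself.
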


\noindent It may be difficult to compute $\cinvt(K)$ in general, but for example if $K$ is an instanton L-space knot then it follows from \cite{bs-lspace,lpcs} that $\cinvt(K)=2g(K)-1$; in particular, $\cinvt(K)$ is an odd integer in this case.

In this paper, we give an alternative formulation of $\cinvt(K)$, in terms of the decomposition of cobordism maps in framed instanton homology  which we developed in  earlier work \cite{bs-lspace}.  After proving a new ``conjugation'' symmetry for this decomposition (Theorem~\ref{thm:conjugation}), named after analogous symmetries in Heegaard Floer homology and  monopole Floer homology, we then show that the ``L-space oddity'' observed above generalizes to nearly all knots:

\begin{theorem}[Theorem~\ref{thm:nu-odd}] \label{thm:main-nu-odd}
For any knot $K \subset S^3$, $\cinvt(K)$ is either zero or odd.
\end{theorem}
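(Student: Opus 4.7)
The plan is to combine the alternative characterization of $\cinvt(K)$ in terms of the decomposition of surgery cobordism maps developed in \cite{bs-lspace} with the conjugation symmetry established earlier in this paper (Theorem~\ref{thm:conjugation}). The overall shape of the argument is: conjugation forces a symmetry $s \leftrightarrow -s$ on the indexing set for this decomposition and preserves the defining property of $\cinvt(K)$, while a parity analysis of the indexing set shows that the only self-paired index is $0$ and every other admissible index is odd.

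First I would fix a large integer $n \geq 2g(K)+1$ and consider the two-handle cobordism $W_n(K)\colon S^3 \to S^3_n(K)$ decorated with a capped-off Seifert surface $\widehat{\Sigma}$. The induced map on framed instanton homology decomposes into generalized $\mu(\widehat\Sigma)$-eigenspaces indexed by integers $s$ lying in a shifted lattice determined by the parity of $n$; the invariant $\cinvt(K)$ is characterized in \cite{bs-lspace} as a threshold index in this decomposition (the extremal $s$ where a certain summand map is non-vanishing or surjective in the appropriate sense). For $n$ odd, the admissible indices $s$ are all odd integers under the natural normalization compatible with Theorem~\ref{thm:dim-surgery}.

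Next I would apply Theorem~\ref{thm:conjugation} to this decomposition: conjugation sends the $s$-summand to the $(-s)$-summand and intertwines the summand maps that are used to define $\cinvt(K)$. Consequently, the defining property holds at index $s$ if and only if it holds at index $-s$, so the set of admissible indices is symmetric about $0$. Combined with the parity constraint, for $n$ odd the extremal admissible index is either paired with its negative (giving an odd value of $\cinvt(K)$) or is itself fixed by $s \mapsto -s$. The only fixed integer is $0$, and this forces the dichotomy in the theorem. One also needs to verify that the answer does not depend on the choice of $n$, which should follow from the stabilization / consistency properties of the decomposition in \cite{bs-lspace} together with the congruence $r_0(K) \equiv \cinvt(K) \pmod 2$ noted in the introduction.

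The main obstacle I anticipate is the careful verification that conjugation truly intertwines the summand maps that enter into the \emph{definition} of $\cinvt(K)$, rather than merely permuting eigenspaces: one must identify the conjugate of the decorated cobordism map on the $s$-summand with the original map on the $(-s)$-summand in a way that preserves the precise non-vanishing condition. A secondary technical point is correctly tracking the shift of the indexing lattice under different normalizations, so that the parity conclusion reads off as ``odd'' in the intended convention and the fixed index really does correspond to $\cinvt(K) = 0$.
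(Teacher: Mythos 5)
Your proposal captures the conjugation symmetry idea (Theorem~\ref{thm:conjugation}) and the broad parity heuristic, but it misidentifies the characterization of $\cinvt(K)$ and, more importantly, omits the step that does essentially all the work in the paper's proof.

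The paper does not characterize $\cinvt(K)$ as an extremal or threshold index in a cobordism decomposition. Instead, Proposition~\ref{prop:cinvt-count-y_i} shows that when $\cinvt(K) > 0$, it equals the \emph{number} of nonzero classes
\[ y_i = I^\#(X_0,\tilde\nu_0; s_i)(\bfone) \in I^\#(S^3_0(K),\mu;t_i), \]
where $X_0$ is the trace of $0$-surgery (not $n$-surgery for large $n$). Lemma~\ref{lem:conjugation-y_i}, the application of conjugation symmetry you anticipated, then says $y_i \neq 0 \iff y_{-i} \neq 0$, so the nonzero $y_i$ pair up except possibly $y_0$. This gives $\cinvt(K) \equiv (\text{0 or 1}) \pmod 2$ depending on whether $y_0$ vanishes. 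Your sketch stops here, treating the ``only fixed index is $0$'' observation as if it closes the argument. It does not: if $y_0 = 0$ while some $y_{\pm i}$ with $i > 0$ are nonzero, then $\cinvt(K)$ would be a positive even number, and nothing in conjugation alone rules this out.

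The missing idea is Proposition~\ref{prop:y0-nonzero}: if $\cinvt(K) > 0$ then $y_0 \neq 0$. This is the heart of the theorem and occupies Subsections~\ref{ssec:reduction} and~\ref{ssec:linalg}. The paper proves it by contradiction: assuming $y_0 = 0$, it produces explicit linear combinations $x_0,\dots,x_{2k-1}$ of the $y_i$ (with $\cinvt(K)=2k$) whose images under the composite maps $G_n\circ\cdots\circ\tilde G_1$ recover each generator $I^\#(X_n,\nu_n)(\bfone)$ up to scalar (Lemma~\ref{lem:X1-in-terms-of-z1} and Proposition~\ref{prop:X1-coefficients}). Nonvanishing of these generators forces the $x_n$ to be linearly independent, hence a certain $2k\times 2k$ coefficient matrix $M_{i_1,\dots,i_k;h}$ to be invertible (Proposition~\ref{prop:M-invertible}). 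The contradiction comes from Lemma~\ref{lem:N-matrix-polynomials} and Proposition~\ref{prop:N-kernel}, which show that the anti-symmetrized matrix $N$ has rows given by evaluating odd polynomials of degree at most $2k-3$ and therefore has rank at most $k-1$, forcing a nontrivial kernel. None of this combinatorial content appears in your proposal, and a parity count on the indexing lattice cannot substitute for it. Your secondary worry about tracking lattice shifts is a real but minor bookkeeping issue by comparison; the essential gap is that you have assumed, rather than proved, that $y_0$ cannot vanish.
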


We use these results to study the question of when the invariant $r_0(K)$ takes small values.  Note that if $r_0(K) \leq 2$ then by definition we must have
\[ (\cinvt(K), r_0(K)) = (0,0) \text{ or } (\pm1,1) \text{ or } (0,2). \]
The first two cases are addressed by \cite[Proposition~7.12]{bs-lspace}, and correspond to the unknot and the trefoils, respectively.  In Proposition~\ref{prop:almost-cinvt-0}, we understand the third case completely, resulting in the following theorem:

\begin{theorem} \label{thm:main-r0-small}
If $r_0(K) \leq 2$ then $K$ is the unknot, a trefoil, or the figure eight knot.
\end{theorem}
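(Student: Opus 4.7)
The plan is to enumerate the possible pairs $(\cinvt(K), r_0(K))$ subject to the three constraints $r_0(K) \geq |\cinvt(K)|$, $r_0(K) \equiv \cinvt(K) \pmod{2}$, and Theorem~\ref{thm:main-nu-odd}, which asserts that $\cinvt(K)$ is either zero or odd. Under the hypothesis $r_0(K) \leq 2$, the only possibilities are
\[ (\cinvt(K), r_0(K)) \in \{(0,0),\ (\pm 1, 1),\ (0, 2)\}. \]

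The first two cases are already handled in \cite[Proposition~7.12]{bs-lspace}: the pair $(0,0)$ characterizes the unknot, while $(\pm 1, 1)$ characterizes the right- and left-handed trefoils as the only instanton L-space knots of genus one. The remaining case $(\cinvt(K), r_0(K)) = (0, 2)$ is the substance of Proposition~\ref{prop:almost-cinvt-0}, whose conclusion is that $K$ must be the figure-eight knot $4_1$; granting that proposition, the theorem follows by case analysis.

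To prove the proposition, I would first use Theorem~\ref{thm:dim-surgery} to record that, under these hypotheses, $\dim_\Q I^\#(S^3_{p/q}(K)) = 2q + |p|$ for every coprime pair $(p,q)$ with $q > 0$ and $p \ne 0$; in particular $S^3_{+1}(K)$ is an integer homology sphere with $\dim_\Q I^\# = 3$, matching $\Sigma(2,3,7) = S^3_{+1}(4_1)$. Next, a genus bound of the form $r_0(K) \geq 2g(K)-1$ for non-trivial, non-L-space knots (available from the framework of \cite{bs-lspace}) should force $g(K) = 1$, and the surgery dimension formula then pins down the instanton knot Floer homology $\KHI(K)$ in each Alexander grading. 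Finally, I would invoke either an instanton-theoretic detection theorem for the figure-eight knot or a characterizing-slope argument for $\Sigma(2,3,7)$ among integer surgeries on knots in $S^3$ to conclude $K = 4_1$.

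The main obstacle is this last identification step. Matching surgery dimensions is a coarse, rank-level statement and does not by itself determine a knot; for the unknot and the trefoils the promotion to a topological identification goes through the detection of instanton L-space knots carried out in \cite{bs-lspace}, but the figure-eight is neither an L-space knot nor the mirror of one, so a genuinely new detection input is needed. I expect the crucial ingredient to be a genus-one variant of figure-eight detection in which the vanishing $\cinvt(K) = 0$ plays an essential role, ruling out every other genus-one candidate with the same $\KHI$ dimension profile.
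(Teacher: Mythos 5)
Your reduction of the theorem to the enumeration $(\cinvt(K), r_0(K)) \in \{(0,0),\ (\pm 1, 1),\ (0, 2)\}$ and the invocation of \cite[Proposition~7.12]{bs-lspace} for the first two cases match the paper exactly. But the entire substance of the theorem is Proposition~\ref{prop:almost-cinvt-0}, which you do not actually prove, and the plan you sketch for it does not line up with the paper or, as far as I can tell, with anything provable by the tools available. In particular you appeal to a ``genus bound of the form $r_0(K) \geq 2g(K)-1$ for non-trivial, non-L-space knots'': no such bound appears in \cite{bs-lspace} or in this paper, and the conclusion $g(K)=1$ in the $(0,2)$ case is instead established by the much more delicate Proposition~\ref{prop:02-13}, whose argument runs through the conjugation symmetry (Lemma~\ref{lem:conjugation-y_i}) and the nonvanishing of $y_0$ (Proposition~\ref{prop:y0-nonzero}), not through a general inequality relating $r_0$ and $g$. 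You also defer the final identification of $K$ with the figure eight to a hypothesized ``figure-eight detection theorem'' or ``characterizing-slope argument for $\Sigma(2,3,7)$,'' explicitly flagging this as the main obstacle without resolving it.

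The paper resolves that obstacle in a quite specific way which you should internalize, because it does not look like a detection theorem in the usual sense. By Corollary~\ref{cor:nu-0-twisted}, $(\cinvt(K), r_0(K)) = (0,2)$ means $\{\dim I^\#(S^3_0(K)),\ \dim I^\#(S^3_0(K),\mu)\} = \{2,4\}$, and the proof splits on which dimension goes where. If $\dim I^\#(S^3_0(K),\mu) = 2$, then the fiberedness criterion Lemma~\ref{lem:fiber-detection} forces $K$ to be fibered of genus one, hence a trefoil or the figure eight, and the trefoils are excluded by their $(\cinvt, r_0)$ values; this is where the identification happens, and the only ``detection'' input is fiber detection, not figure-eight detection. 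In the other case $\dim I^\#(S^3_0(K),\mu) = 4$, the paper derives a contradiction: it pins down the $\Z/4\Z$-graded $I^\#(S^3_{\pm1}(K))$ via the cobordism degree formula, passes through Fukaya's connected sum formula (Theorem~\ref{thm:fukaya-sum}) to compute $\hat{I}(S^3_{-1}(K))$ and the Fr{\o}yshov invariant $h(S^3_{-1}(K))=1$, reads off $\Delta_K(t) = 2t-3+2t^{-1}$ from the Casson invariant, and then considers the $(-1,2)$-cable $C$ together with Gordon's identity $S^3_{-1}(C) \cong S^3_{-1/4}(K)$. The Euler-characteristic lower bound $\dim I(S^3_0(C))_\mu \geq 16$ coming from $\Delta_C(t)=\Delta_K(t^2)$ is incompatible with the value $\dim I^\#(S^3_{-1/4}(K)) = 9$ forced by Theorem~\ref{thm:dim-surgery}. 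None of these ingredients — the W/V-shaped dichotomy, fiber detection, the Fr{\o}yshov/Casson computation, or the cabling-plus-Gordon trick — appear in your sketch, so there is a genuine gap: you have correctly set up the problem but not solved it.
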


We apply the technical results above to various questions related to homology cobordism, Dehn surgery, and the framed instanton homology of surgeries, as described in the remainder of this introduction. In preparation for stating these applications,  recall that in \cite{bs-concordance} we also defined a smooth concordance \emph{homomorphism} $\chominvt(K)$ via the  homogenization of $\cinvt(K)$:
\[ \chominvt(K) = \frac{1}{2} \lim_{n\to\infty} \frac{\cinvt(\#^n K)}{n}. \]  Ghosh, Li, and Wong recently proved  \cite{glw}  that $\chominvt(K)$ is always an integer, a fact which we use in some of the applications discussed below.
\subsection{Homology cobordism}

Let $\Theta^3_\Z$ denote the  homology cobordism group, whose elements are integer homology 3-spheres modulo smooth integer homology cobordism.  Building on recent work of Nozaki, Sato, and Taniguchi \cite{nozaki-sato-taniguchi}, we prove the following:

\begin{theorem}[{Theorem~\ref{thm:homology-cobordism}}] \label{thm:homology-cobordism-main}
Let $K \subset S^3$ be a knot satisfying $\chominvt(K) > 0$, or more generally $\cinvt(K) > 0$. Then the homology spheres
\[ S^3_{1/n}(K), \quad n \geq 1 \]
are linearly independent in $\Theta^3_\Z$.
\end{theorem}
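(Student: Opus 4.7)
The plan is to argue by contradiction using the filtered instanton Floer theory developed by Nozaki--Sato--Taniguchi in \cite{nozaki-sato-taniguchi}, which produces a family of instanton-theoretic homology cobordism invariants engineered precisely for separating elements like our $1/n$-surgeries.  Suppose for contradiction that there exist integers $0 < n_1 < n_2 < \cdots < n_k$ and nonzero $a_1,\dots,a_k \in \Z$ with
\[
\sum_{i=1}^{k} a_i\,[S^3_{1/n_i}(K)] = 0 \in \Theta^3_\Z.
\]
My aim is to extract from the NST framework a homology cobordism invariant $\Phi_{n_k}$ that is additive (or has controlled failure of additivity) under connected sum, is nonzero on $[S^3_{1/n_k}(K)]$, and vanishes on each $[S^3_{1/n_i}(K)]$ with $i<k$; evaluating $\Phi_{n_k}$ on this relation then forces $a_k=0$, a contradiction.

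The role of the hypothesis $\cinvt(K) > 0$ is to feed Theorem~\ref{thm:dim-surgery} the input it needs: with $p=1$ and $q=n\ge 1$, the formula yields
\[
\dim_\Q I^\#(S^3_{1/n}(K)) = n\,r_0(K) + n\cinvt(K) - 1,
\]
so that the framed instanton homology of $1/n$-surgery grows linearly in $n$ with a prescribed leading term.  Combining this growth with the decomposition of cobordism maps from \cite{bs-lspace} and the new conjugation symmetry established in Theorem~\ref{thm:conjugation}, I would pin down the internal grading structure of the instanton complex of $S^3_{1/n}(K)$ finely enough to extract, for each $n$, a critical holonomy parameter $s_n$ at which the corresponding NST invariant becomes nonzero, with the parameters $s_n$ genuinely distinct across different values of $n$ (the naive scaling suggests $s_n \sim 1/(n\cinvt(K))$, a decreasing sequence).

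The hardest step, which I expect to carry most of the technical content, is precisely this matching: translating the $\cinvt$-based dimension and grading data from the present paper into the holonomy-filtered language of \cite{nozaki-sato-taniguchi}, and verifying that the candidate parameter $s_{n_k}$ both realizes a nonzero invariant on $S^3_{1/n_k}(K)$ and lies outside the support window of each $S^3_{1/n_i}(K)$ with $i<k$.  The stronger hypothesis $\chominvt(K) > 0$ is presumably retained for cases where iterated connected sums enter the bookkeeping, since the homogenization defining $\chominvt$ is genuinely additive whereas $\cinvt$ is only a quasi-morphism; for the single linear relation above, the weaker hypothesis $\cinvt(K) > 0$ should be enough, provided the separation of thresholds $s_n$ can be argued directly from Theorem~\ref{thm:dim-surgery} rather than passing through $\chominvt$.
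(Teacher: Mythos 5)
Your plan misses the key structural simplification the paper exploits: you do not need to re-derive linear independence from scratch inside the Nozaki--Sato--Taniguchi framework. Theorem~\ref{thm:nst} (which is \cite[Theorem~1.8]{nozaki-sato-taniguchi}) is already stated as a black box: if $h(S^3_1(K)) < 0$, where $h$ is the Fr{\o}yshov invariant, then the surgeries $S^3_{1/n}(K)$ for $n \geq 1$ are linearly independent in $\Theta^3_\Z$. The entire content of Theorem~\ref{thm:homology-cobordism} is therefore Proposition~\ref{prop:h-1-surgery}, which establishes that $\cinvt(K) > 0 \Rightarrow h(S^3_1(K)) < 0$, plus Lemma~\ref{lem:tau-nonzero}, which upgrades $\chominvt(K) > 0$ to $\cinvt(K) > 0$. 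Your proposal never mentions the Fr{\o}yshov invariant or the task of verifying this hypothesis; instead it aims to extract bespoke homology-cobordism invariants $\Phi_{n_k}$ and match ``holonomy parameters'' $s_n$ across different surgeries, which amounts to re-deriving the NST linear-independence argument in a more complicated and unspecified form.

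Beyond the strategic redirect, the actual technical bridge in the paper uses ingredients absent from your sketch: Floer's exact triangle $I(S^3) \to I(S^3_0(K))_\mu \to I(S^3_1(K))$ and its reduced version, Fr{\o}yshov's theorem that definite-bounding homology spheres have $h \geq 0$ (applied to the trace of $(-1)$-surgery on $\mirror{K}$, forcing $h(S^3_1(K)) \leq 0$), and Fukaya's connected sum formula (Theorem~\ref{thm:fukaya-sum-general}) translating between $I$, $\hat{I}$, and $I^\#$ via $\ker(u^2-64)$. The dimension formula from Theorem~\ref{thm:dim-surgery} enters, but only at the very last step to derive a contradiction with $h(S^3_1(K))=0$, not to generate a sequence of distinct thresholds. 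Your suggested use of conjugation symmetry and the cobordism map decomposition plays no role in the paper's argument here; those tools are used elsewhere (for the parity of $\cinvt$). As written, your plan has no concrete path to a proof and attacks a harder problem than necessary.
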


Most of the real content of Theorem~\ref{thm:homology-cobordism-main} is already in \cite{nozaki-sato-taniguchi}, where they prove the same conclusion under the hypothesis that the Fr{\o}yshov invariant \cite{froyshov} of $S^3_1(K)$ is negative.  Our contribution  is to show that their hypothesis is implied by $\chominvt(K) > 0$, or by $\cinvt(K) > 0$, either of which is often much easier to verify. For example, we have the following:

\begin{corollary} \label{cor:knots-with-tau-positive}
Let $K \subset S^3$ be any of the following:
\begin{itemize}
\item A knot having a transverse representative with positive self-linking number;
\item A  quasi-positive knot which is \emph{not} smoothly slice;
\item An alternating knot with negative signature, under the convention that the right-handed trefoil has signature $-2$.
\end{itemize}
Then the homology spheres $S^3_{1/n}(K)$ for $n \geq 1$ are linearly independent in $\Theta^3_\Z$.
\end{corollary}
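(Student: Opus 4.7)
The plan is to invoke Theorem~\ref{thm:homology-cobordism-main} after verifying, for each of the three classes, that $\cinvt(K) > 0$ or $\chominvt(K) > 0$.

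For a knot admitting a transverse representative $\mathcal{T}$ with $\lsl(\mathcal{T}) > 0$, the approach is to appeal to an instanton slice-Bennequin-type inequality $\cinvt(K) \geq \lsl(\mathcal{T})$. This is the instanton analog of Plamenevskaya's bound $2\tau(K) - 1 \geq \maxsl(K)$ in Heegaard Floer, and I expect it to be available from our earlier paper \cite{bs-concordance}, where $\cinvt$ was defined. Combined with Theorem~\ref{thm:main-nu-odd}, any positive self-linking representative forces $\cinvt(K) \geq 1$, at which point Theorem~\ref{thm:homology-cobordism-main} applies directly.

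For a non-slice quasi-positive knot the plan is to reduce to the first bullet. If $K$ is the closure of a quasi-positive braid on $b$ strands with $c$ positive-band factors, then by Rudolph's work $g_4(K) = g(K) = (c-b+1)/2$, and the braid closure is itself a transverse representative with self-linking number $c - b = 2g_4(K) - 1$. Non-sliceness forces $g_4(K) \geq 1$, hence $\lsl \geq 1$, and the previous bullet applies.

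For an alternating knot with $\sigma(K) < 0$, I would use an instanton analog of the Ozsv\'ath--Szab\'o identity $\tau(K) = -\sigma(K)/2$ for alternating knots, namely an identity or inequality of the form $\chominvt(K) \geq -\sigma(K)/2$. Given this, $\sigma(K) < 0$ immediately yields $\chominvt(K) > 0$, and Theorem~\ref{thm:homology-cobordism-main} applies. The main obstacle is establishing this signature bound: unlike the first two bullets it is not a direct consequence of positivity or of a transverse invariant inequality, and I would expect to prove it by exploiting the conjugation symmetry of Theorem~\ref{thm:conjugation} together with a skein/surgery exact triangle induction on the number of crossings, in parallel with the Heegaard Floer proof that alternating knots are Floer-thin.
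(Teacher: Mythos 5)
Your overall strategy is the same as the paper's: reduce each bullet to the positivity criterion in Theorem~\ref{thm:homology-cobordism-main}. On the first bullet you correctly identify the slice-Bennequin-type bound $\cinvt(K) \geq \maxsl(K)$ from \cite[Theorem~6.1]{bs-concordance}; note that positivity of $\cinvt$ then follows at once from $\cinvt(K) \geq \maxsl(K) \geq 1$ (since $\cinvt$ is an integer), so the appeal to Theorem~\ref{thm:main-nu-odd} here is unnecessary. On the second bullet you take a slightly different route from the paper: you reduce a non-slice quasi-positive knot to the first bullet by observing that the quasi-positive braid closure is a transverse representative of self-linking number $2g_s(K)-1 \geq 1$, whereas the paper simply cites the equality $\chominvt(K) = g_s(K)$ from \cite[Corollary~1.7]{bs-concordance}. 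Both are valid; your route is a nice elementary reduction, while the paper's citation is more direct.

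On the third bullet your proof is conceptually correct but proposes an unnecessary detour. The identity $\chominvt(K) = -\sigma(K)/2$ for alternating knots is already established in \cite[Corollary~1.10]{bs-concordance}, so there is no need to develop a new skein-theoretic induction using Theorem~\ref{thm:conjugation}; one simply cites that result, and $\sigma(K) < 0$ gives $\chominvt(K) > 0$ immediately. If you had to prove the signature identity from scratch, your proposed strategy (conjugation symmetry plus a surgery exact triangle induction) is plausible but would be substantial new work; recognizing that the prerequisite is already in the prior paper closes the gap.
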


This corollary generalizes several previous results in the literature. For instance, it applies to the  torus knots $T(p,q)$ with $p,q>1$, as these are quasi-positive and non-slice, recovering the old  result of Fintushel--Stern \cite{fs-seifert} and Furuta \cite{furuta} that  the positive $1/n$-surgeries on any such knot are linearly independent in $\Theta^3_\Z$. Moreover, Nozaki--Sato--Taniguchi used their recent result in \cite{nozaki-sato-taniguchi} to prove that if $K_k$ is the 2-bridge knot corresponding to the rational number $2/(4k-1)$, with $k$ a non-negative integer, then the positive $1/n$-surgeries on the mirror of $K_k$  are linearly independent in $\Theta^3_\Z$. We note that  such knots are alternating with negative signature, and therefore fall under the hypotheses of Corollary \ref{cor:knots-with-tau-positive}.

\begin{proof}[Proof of Corollary \ref{cor:knots-with-tau-positive}]
In the first case, we apply the bound \[\cinvt(K) \geq \maxsl(K) \geq 1\] of \cite[Theorem~6.1]{bs-concordance}.  For the second case, quasi-positive knots satisfy \[\chominvt(K) = g_s(K) > 0\] by \cite[Corollary~1.7]{bs-concordance}.  And for the third case, alternating knots satisfy \[\chominvt(K) = -\sigma(K)/2 > 0\] by \cite[Corollary~1.10]{bs-concordance}.
\end{proof}

\begin{remark}As another corollary of the work needed to prove Theorem~\ref{thm:homology-cobordism-main}, we also show in Proposition~\ref{prop:rationally-slice} that rationally slice knots $K$ satisfy $\cinvt(K) = \chominvt(K) = 0$.
\end{remark}

\subsection{Bounds on surgery slopes}

Given a rational homology sphere $Y$, it is natural to ask whether $Y$ can be realized as Dehn surgery on a knot  $K$ in $S^3$. If so, then one would  like to determine all such $K$ as well as the corresponding surgery slopes.
We use Theorems~\ref{thm:dim-surgery} and \ref{thm:main-r0-small} to prove the following upper bound on the denominators of such slopes. Note that this bound depends on $Y$ but not on $K$:

\begin{theorem} \label{thm:q-bound}
Suppose that the rational homology sphere $Y$ is $p/q$-surgery on a nontrivial knot $K \subset S^3$, where $\gcd(p,q)=1$ and $q>0$.  If $K$ is not a trefoil or the figure eight, then
\[ q \leq \frac{1}{3} \dim_\Q I^\#(Y), \]
and if equality holds then $\dim_\Q I^\#(Y) = 3$ and $p/q$ is either $\pm1$ or $\pm3$.
\end{theorem}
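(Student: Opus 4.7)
The plan is to derive this as a quick consequence of Theorems~\ref{thm:dim-surgery} and~\ref{thm:main-r0-small}, together with the constraints on the pair $(\cinvt(K), r_0(K))$ stated at the beginning of the introduction. First, because $Y$ is a rational homology sphere we must have $p \neq 0$, so the surgery formula of Theorem~\ref{thm:dim-surgery} applies regardless of whether $\cinvt(K)$ vanishes. Second, because $K$ is nontrivial and is neither a trefoil nor the figure eight, the contrapositive of Theorem~\ref{thm:main-r0-small} yields $r_0(K) \geq 3$.

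Combining these with Theorem~\ref{thm:dim-surgery} gives
\[ \dim_\Q I^\#(Y) \;=\; q\cdot r_0(K) + |p - q\cinvt(K)| \;\geq\; 3q + 0 \;=\; 3q, \]
which is precisely the inequality $q \leq \tfrac{1}{3}\dim_\Q I^\#(Y)$.

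For the equality case, I would argue as follows. Equality in the displayed chain forces both $r_0(K) = 3$ and $|p - q\cinvt(K)| = 0$, so $p = q\cinvt(K)$. The coprimality hypothesis $\gcd(p,q) = 1$ then forces $q = 1$ and $p = \cinvt(K)$, from which $\dim_\Q I^\#(Y) = 3q = 3$ follows immediately. To pin down the possible values of $p$, I would invoke the constraints $r_0(K) \geq |\cinvt(K)|$ and $r_0(K) \equiv \cinvt(K) \pmod{2}$: with $r_0(K) = 3$ these force $|\cinvt(K)| \leq 3$ and $\cinvt(K)$ odd, hence $\cinvt(K) \in \{\pm 1, \pm 3\}$ and therefore $p/q = \cinvt(K) \in \{\pm 1, \pm 3\}$. (Theorem~\ref{thm:main-nu-odd} is consistent with this conclusion but is not strictly required, since the mod-$2$ constraint already delivers oddness once $r_0(K) = 3$.)

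The argument is essentially mechanical and I do not anticipate any serious obstacle; the only point worth flagging is that we must verify Theorem~\ref{thm:dim-surgery}'s side condition (namely that $p \neq 0$ whenever $\cinvt(K) = 0$) is automatic here, which is immediate from the assumption that $Y$ is a rational homology sphere.
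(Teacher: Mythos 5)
Your proof is correct and follows essentially the same route as the paper: apply Theorem~\ref{thm:main-r0-small} to get $r_0(K)\geq 3$, feed that into the surgery formula of Theorem~\ref{thm:dim-surgery}, and in the equality case use $r_0(K)=3$, $p/q=\cinvt(K)$, and the constraints $|\cinvt(K)|\leq r_0(K)$ and $r_0(K)\equiv\cinvt(K)\pmod 2$ to pin down the slope. You spell out the coprimality step forcing $q=1$ a bit more explicitly than the paper does, but the argument is the same.
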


\begin{proof}
Theorem~\ref{thm:main-r0-small} says that $r_0(K) \geq 3$, so then
\[ \dim I^\#(Y) = q\cdot r_0(K) + |p-q\cinvt(K)| \geq 3q + |p-q\cinvt(K)| \geq 3q. \]
If equality holds then we must have $r_0(K)=3$ and $p/q = \cinvt(K)$, which satisfies \[|\cinvt(K)| \leq r_0(K) = 3\] and which is an odd integer since it has the same parity as $r_0(K)$.  Thus, $|p/q| \in \{1,3\}$ and $\dim I^\#(Y) = 3q = 3$.
\end{proof}

\begin{remark} \label{rem:r0-equals-3}
Theorem~\ref{thm:q-bound} could be improved by including the cases where $r_0(K)=3$.  These knots were also classified some time after the initial version of this paper appeared, as \cite[Theorem~3.13]{bs-characterizing-5_2} in Heegaard Floer homology and \cite[\S8]{li-ye-2} for the instanton Floer analogue (both of which use the fact that $T_{2,5}$ is the only genus-2 L-space knot \cite{frw-cinquefoil}).  In either setting, if $r_0(K)=3$, then $K$ must be either $T_{2,5}$ or $5_2$ up to mirroring.  Thus $r_0(K) \leq 3$ if and only if $K$ has crossing number at most $5$; otherwise, the proof of Theorem~\ref{thm:q-bound} gives us the stronger $q \leq \frac{1}{4} \dim_\Q I^\#(Y)$, with equality only if $r_0(K)=4$ (hence $\cinvt(K)=0$, by Theorem~\ref{thm:main-nu-odd} and the evenness of $r_0(K)-\cinvt(K)$) and $\frac{p}{q}=\cinvt(K)=0$.
\end{remark}

\begin{remark}
Gainullin \cite[Theorem~7]{gainullin} gave a similar bound using Heegaard Floer homology; namely, that if $Y$ is $p/q$-surgery on a nontrivial knot in $S^3$, then 
\[ q \leq |H_1(Y)| + \dim_{\Z/2\Z} \hfred(Y). \]
One can show straightforwardly that the right side is greater than or equal to 
\[ \frac{1}{2}(\dim_{\Z/2\Z}\hfhat(Y) + |H_1(Y)|). \]
Given that $\hfhat(Y)$ and $I^\#(Y)$ are expected to have the same dimension, at least over a field of characteristic zero, Theorem~\ref{thm:q-bound} should provide a strictly smaller bound on $q$. \end{remark}

Given a link $L \subset S^3$, Scaduto \cite{scaduto} constructed a spectral sequence
\begin{equation} \label{eq:scaduto-ss}
\Khoddr(L) \Longrightarrow I^\#(-\dcover(L))
\end{equation}
from the reduced odd Khovanov homology of $L$ to the branched double cover of $L$ with orientation reversed.  Since $\dim I^\#$ does not depend on orientation, we can combine this with Theorem~\ref{thm:q-bound} to get the following purely combinatorial bound on surgery slopes.

\begin{corollary} \label{cor:main-bdc}
Let $Y = \dcover(L)$ be a rational homology sphere which can be realized as $p/q$-surgery on a nontrivial knot $K \subset S^3$ with $\gcd(p,q)=1$ and $q>0$.  If $K$ is not a trefoil or the figure eight, then
\[ q \leq \frac{1}{3} \dim_\Q \Khoddr(L). \]
If equality holds then $p/q$ is either $\pm 1$ or $\pm 3$,  and $L$ is a knot with $\dim_\Q \Khoddr(L) = 3$.
\end{corollary}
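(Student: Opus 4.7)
My plan is to combine Theorem~\ref{thm:q-bound} with Scaduto's spectral sequence~\eqref{eq:scaduto-ss}.  Since a convergent spectral sequence bounds the rank of its target, it yields
\[ \dim_\Q I^\#(-\dcover(L)) \leq \dim_\Q \Khoddr(L), \]
and since $\dim_\Q I^\#$ is invariant under orientation reversal, the left side equals $\dim_\Q I^\#(Y)$.  Chaining with Theorem~\ref{thm:q-bound} then gives $q \leq \tfrac{1}{3} \dim_\Q I^\#(Y) \leq \tfrac{1}{3} \dim_\Q \Khoddr(L)$, which is the main bound.

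For the equality clause, both inequalities must be equalities.  The equality case of Theorem~\ref{thm:q-bound} provides $\dim_\Q I^\#(Y) = 3$ and $p/q \in \{\pm 1,\pm 3\}$, while equality in the spectral-sequence estimate forces $\dim_\Q \Khoddr(L) = 3$.  In particular $p/q$ is an integer, so $q = 1$ and $p$ is odd.

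The one remaining point---and where I expect the main (if small) obstacle---is that $L$ has only one component.  The plan is to apply Smith's inequality to the covering involution $\tau$ on $Y = \dcover(L)$, whose fixed set is $L$.  If $L$ has $n$ circle components, then
\[ 2n \;=\; \sum_i \dim_{\F_2} H_i(L;\F_2) \;\leq\; \sum_i \dim_{\F_2} H_i(Y;\F_2) \;=\; 2 + 2\dim_{\F_2} H_1(Y;\F_2). \]
Since $H_1(Y;\Z) \cong \Z/p\Z$ with $p$ odd, the right-hand side equals $2$, so $n \leq 1$ and $L$ is a knot.
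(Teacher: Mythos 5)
Your proof is correct and follows the paper's argument exactly: combine Scaduto's spectral sequence with the orientation-independence of $\dim I^\#$ to reduce the bound to Theorem~\ref{thm:q-bound}, then read off the equality case.  The only difference is in the very last step: where the paper simply observes that $\det(L)=|H_1(Y)|=|p|$ is odd (hence $L$ is a knot), you supply a short Smith-theoretic proof of that implication, which is fine but not a substantively different route.
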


The last claim that $L$ must be a knot follows because $\det(L) = |H_1(Y)| = |p|$ is odd.

\begin{remark}
Corollary~\ref{cor:main-bdc} is stronger than anything similar which might be proved with Heegaard Floer homology.  The reason for this is that the analogous spectral sequence $\Khr(L) \Longrightarrow \hfhat(-\dcover(L))$ \cite{osz-branched} is only proved with $\Z/2\Z$ coefficients, so it would yield a bound of the form
\[ q \leq \frac{1}{3} \dim_{\Z/2\Z} \Khr(L) = \frac{1}{3} \dim_{\Z/2\Z} \Khoddr(L). \]
This upper bound is larger than the one in Corollary~\ref{cor:main-bdc} whenever $\Khoddr(L)$ has $2$-torsion.
\end{remark}

\subsection{Framed instanton homology for different bundles}

One can define the framed instanton homology $I^\#(Y,\lambda)$ for any embedded multicurve $\lambda \subset Y$, where $\lambda$ represents the Poincar\'e dual of the first Chern class of a $U(2)$-bundle $E\to Y$.  This generalizes the usual $I^\#(Y)$, which refers to the case where $\lambda$ is nullhomologous.

The isomorphism class of $I^\#(Y,\lambda)$ depends only on that of the $SO(3)$-bundle $\mathrm{ad}(E)$, which is in turn determined by the homology class
\[ [\lambda] \in H_1(Y;\Z/2\Z). \]
However, as a corollary of Theorem~\ref{thm:main-nu-odd}, we can show that $I^\#(Y,\lambda)$ often does not even depend on this class, at least up to isomorphism of $\Z/2\Z$-graded vector spaces.

\begin{theorem}[{Theorem~\ref{thm:surgery-lambda}}] \label{thm:main-surgery-lambda}
Let $Y$ be a rational homology sphere which can be constructed by Dehn surgery on a knot in $S^3$.  Then
\[ \dim I^\#(Y,\lambda) \]
is independent of $\lambda$.
\end{theorem}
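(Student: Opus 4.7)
The plan is as follows. First, I would observe that $H_1(Y;\Z/2\Z)$ is trivial unless $Y = S^3_{p/q}(K)$ with $p$ even, so the theorem is vacuous unless $p$ is even. Since $\gcd(p,q)=1$, in the nontrivial case we have $q$ odd and $H_1(Y;\Z/2\Z) \cong \Z/2\Z$, with the two classes being $0$ and $[\mu]$, where $\mu$ denotes a meridian of $K$ viewed inside $Y$. The task reduces to showing
\[ \dim I^\#(Y, 0) = \dim I^\#(Y, [\mu]). \]

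By Theorem~\ref{thm:dim-surgery}, the left-hand side equals $q \cdot r_0(K) + |p - q\cinvt(K)|$. My approach would be to establish a parallel surgery formula for the right-hand side, by adapting the decomposition of cobordism maps from \cite{bs-lspace} to the case of a bundle supported near $\mu$, with the conjugation symmetry of Theorem~\ref{thm:conjugation} playing the role of the new technical input. A priori the resulting formula could differ from the trivial-bundle formula by a shift in the linear term $p - q\cinvt(K)$, reflecting the extra twist present in the nontrivial $U(2)$-bundle.

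The second step is to compare the two formulas using parity. Theorem~\ref{thm:main-nu-odd} says that $\cinvt(K)$ is either $0$ or odd, while $p$ is even and $q$ is odd; combining these, the parities of the various quantities appearing in the putative surgery formula for $\dim I^\#(Y,[\mu])$ are tightly constrained. In particular, any even shift in the argument of the absolute value should either be absorbed by $r_0(K)$ or else cancel out, yielding the same expression $q \cdot r_0(K) + |p - q\cinvt(K)|$ as in the trivial-bundle case.

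The main obstacle is the first step: extending the earlier surgery-formula machinery to the twisted setting. The decomposition of cobordism maps in \cite{bs-lspace,bs-concordance} was developed for the trivial bundle, and carrying it out when the bundle is nontrivial along $\mu$ requires tracking how the bundle interacts with the various pieces of the decomposition on the knot surgery cobordisms. This is exactly the setting in which the conjugation symmetry of Theorem~\ref{thm:conjugation} applies, and I expect it to be the essential tool that controls the bundle-swapping ambiguity and thereby forces the dimension to be independent of $\lambda$.
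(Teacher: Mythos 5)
Your reduction to the case where $p$ is even (and hence $q$ odd, with $H_1(Y;\Z/2\Z)\cong\Z/2\Z$ generated by the meridian class) matches the paper, and you have correctly identified Theorem~\ref{thm:main-nu-odd} as the decisive new input. However, the route you propose thereafter is both heavier and, in its crucial step, left unsubstantiated, and it differs substantially from what the paper actually does.

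Your step 1 --- building out a full ``twisted'' analogue of the surgery formula by adapting the decomposition of cobordism maps to a nontrivial bundle supported near $\mu$, with Theorem~\ref{thm:conjugation} controlling the ``bundle-swapping ambiguity'' --- is a large undertaking that is essentially gestured at rather than executed. You say you ``expect'' conjugation symmetry to be the essential tool, but you do not give a mechanism by which it controls the hypothetical shift in the argument of the absolute value, nor is it clear that such a shifted formula is even the right shape to aim for. This is the genuine gap. The paper never does anything like this: it does not re-derive a surgery formula for the twisted bundle, and it does not invoke Theorem~\ref{thm:conjugation} in this proof at all (its only role is indirect, as the engine behind Theorem~\ref{thm:main-nu-odd}).

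What the paper does instead is much lighter, and splits into two cases. For $q>1$, it arranges an exact triangle $I^\#(S^3_{a/b}(K))\to I^\#(S^3_{p/q}(K),\lambda)\to I^\#(S^3_{c/d}(K))\to\cdots$ with slopes at pairwise distance one and $p=a+c$, $q=b+d$; since $p$ is even and the slopes have distance one, $a$ and $c$ are forced to be odd, so the flanking terms are bundle-independent, and the connecting map is shown to vanish by a purely topological argument (an embedded sphere with nontrivial bundle), exactly as in \cite[Proposition~4.5]{bs-concordance}. This case requires no parity of $\cinvt(K)$. For $q=1$, so $p$ a nonzero even integer, the paper sandwiches $\dim I^\#(S^3_p(K),\lambda)$ between $\dim I^\#(S^3_{p-1}(K))\pm 1$ and $\dim I^\#(S^3_{p+1}(K))\pm 1$ using the surgery triangle twice; Theorem~\ref{thm:main-nu-odd} guarantees $p\neq\cinvt(K)$ because $p$ is even and nonzero, so Theorem~\ref{thm:dim-surgery} gives $\dim I^\#(S^3_{p+1}(K))-\dim I^\#(S^3_{p-1}(K))=\pm 2$ with a definite sign, and that resolves both $\pm 1$ ambiguities at once, independently of $\lambda$. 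In short, only dimension bounds via exact triangles are needed --- no twisted decomposition, no twisted structure theorem. I would encourage you to try the triangle-counting argument rather than rebuilding the entire machinery in the twisted setting.
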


This can not be improved to a $\Z/4$-graded isomorphism, even in the case $Y=\mathbb{RP}^3$: Scaduto and Stoffregen \cite[\S5]{scaduto-stoffregen} showed that $I^\#(\mathbb{RP}^3,\lambda)$ is supported in grading $0 \pmod{4}$ when $[\lambda]=0$, but has rank 1 in gradings $0,2 \pmod{4}$ when $[\lambda]$ is nontrivial.

For zero-surgery, the dependence on the bundle is a  bit more subtle:

\begin{theorem}[{Corollary~\ref{cor:nu-0-twisted}}] \label{thm:main-zero-surgery}
Let $K \subset S^3$ be a knot with meridian $\mu$.  Then \[ \dim I^\#(S^3_0(K)) = \dim I^\#(S^3_0(K),\mu) \]
if and only if $\cinvt(K) \neq 0$.  If $\cinvt(K)=0$ then \[\dim I^\#(S^3_0(K))\neq \dim I^\#(S^3_0(K),\mu),\] in which case these dimensions are  $r_0(K)$ and $r_0(K)+2$ in some order.
\end{theorem}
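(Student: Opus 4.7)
The plan is to derive the corollary by separately computing $\dim I^\#(S^3_0(K))$ and $\dim I^\#(S^3_0(K),\mu)$ in terms of $r_0(K)$ and $\cinvt(K)$, then comparing.

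For $\dim I^\#(S^3_0(K),\mu)$ I would use the two Floer surgery exact triangles
\[ I^\#(S^3) \to I^\#(S^3_0(K),\mu) \to I^\#(S^3_{\pm 1}(K)) \to I^\#(S^3)[1] \]
coming from the $(0,\pm1,\infty)$ triads on $K$; the nontrivial bundle $\mu$ appears on the $0$-surgery because the cocore of the $0$-framed 2-handle cobordism represents the meridian. Combining $\dim I^\#(S^3_{\pm 1}(K)) = r_0(K)+|1\mp\cinvt(K)|$ from Theorem~\ref{thm:dim-surgery} with the intersection of the two triangle bounds (each of which has slack $\dim I^\#(S^3)=1$) forces $\dim I^\#(S^3_0(K),\mu) = r_0(K)+|\cinvt(K)|$ whenever $\cinvt(K)\neq 0$. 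When $\cinvt(K) = 0$ the two triangle bounds coincide and leave $\dim I^\#(S^3_0(K),\mu) \in \{r_0(K), r_0(K)+2\}$ after using the parity constraint $r_0(K)\equiv \cinvt(K) \pmod 2$ and Theorem~\ref{thm:main-nu-odd}; I would pin down $\dim I^\#(S^3_0(K),\mu)=r_0(K)$ by passing to the $\spc$-refinement of the triangle from \cite{bs-lspace} and applying the conjugation symmetry of Theorem~\ref{thm:conjugation} to pair up summands at $s$ and $-s$.

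A parallel argument using a surgery exact triangle in which the $0$-surgery carries the trivial bundle yields $\dim I^\#(S^3_0(K)) = r_0(K)+|\cinvt(K)|$ when $\cinvt(K)\neq 0$ and $\dim I^\#(S^3_0(K)) = r_0(K)+2$ when $\cinvt(K)=0$. The extra $2$ in the second case arises from a central, ``anomalous'' contribution to the $\spc$-decomposition of cobordism maps into $I^\#(S^3_0(K))$ that is fixed by the conjugation symmetry; this contribution is nonzero precisely when $\cinvt(K)=0$, again by Theorem~\ref{thm:main-nu-odd}, and it lies entirely in the trivial-bundle piece rather than the $\mu$-piece. Comparing the two formulas then gives exactly the corollary: the two dimensions coincide iff $\cinvt(K) \neq 0$, and otherwise they are $r_0(K)$ and $r_0(K)+2$.

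The main obstacle will be the $\cinvt(K)=0$ case: the surgery triangles alone cannot distinguish $\dim I^\#(S^3_0(K),\mu)=r_0(K)$ from $r_0(K)+2$, and ruling out the wrong value requires the more delicate combination of the $\spc$-graded refinement of the triangle, the conjugation symmetry from Theorem~\ref{thm:conjugation}, and the parity restriction from Theorem~\ref{thm:main-nu-odd} to identify the central fixed summand and show it appears only in the trivial-bundle part.
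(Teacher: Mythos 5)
Your reduction to the two exact triangles linking $I^\#(S^3_0(K))$ (or $I^\#(S^3_0(K),\mu)$) with $I^\#(S^3_{\pm 1}(K))$ is correct and handles the case $\cinvt(K) \neq 0$ by the usual sandwich argument, matching the proof of the corresponding statement in \cite[Proposition~3.3]{bs-concordance}. The problem is the case $\cinvt(K)=0$, which is the actual content of this theorem, and there your plan has a genuine gap and also claims something stronger than what is known.

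First, the gap. Once the triangles leave each of $\dim I^\#(S^3_0(K))$ and $\dim I^\#(S^3_0(K),\mu)$ in $\{r_0(K), r_0(K)+2\}$ independently, the key fact you need is that the two dimensions \emph{differ}, i.e.\ that they are never both $r_0(K)$ or both $r_0(K)+2$. Your proposal does not contain an argument for this; instead you aim to compute each of the two numbers separately, which would of course settle the comparison as a byproduct, but you do not give a viable mechanism for either computation. ``Pairing up summands at $s$ and $-s$ via conjugation'' only tells you the eigenspaces at $\pm s$ have equal dimension; it does not produce a number, and in particular it cannot tell you whether the fixed (central) eigenspace contributes $0$ or $2$ to either bundle. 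The ``anomalous central contribution appearing only in the trivial-bundle piece'' is the crux of your plan but is asserted, not proved, and I do not see a reason it should hold.

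Second, you are claiming a stronger result than is actually true (or at least than is known): you conclude $\dim I^\#(S^3_0(K),\mu)=r_0(K)$ and $\dim I^\#(S^3_0(K))=r_0(K)+2$ in all cases with $\cinvt(K)=0$, i.e.\ that every such knot is ``W-shaped.'' The paper is deliberately agnostic about this: it only proves the two dimensions differ, and later (see the discussion of W-shaped vs.\ V-shaped knots following Corollary~\ref{cor:nu-0-twisted} and Proposition~\ref{prop:ker-nu-group}) explicitly leaves open whether V-shaped knots with $\cinvt=0$ exist. So an argument that pins down the specific assignment would have to contain a new idea not present in your sketch.

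What the paper actually does (Theorem~\ref{thm:nu-zero-surgery}) is prove that the two dimensions cannot both be $d+1$ and cannot both be $d-1$, where $d=\dim I^\#(S^3_{\pm 1}(K))=r_0(K)+1$. Ruling out ``both $d+1$'' uses that the composites of the relevant $2$-handle cobordisms contain an embedded sphere carrying a nontrivial bundle and hence induce the zero map (Lemma~\ref{lem:nu-zero-surgery-not-d+1}). Ruling out ``both $d-1$'' is where Theorem~\ref{thm:conjugation} enters, but in a different way than you propose: assuming $\dim I^\#(Y_0(K))=d-1$ forces all the summands $I^\#(X_0,\nu_0;s_i)(\bfone)$ to vanish (Lemma~\ref{lem:nu-zero-surgery-yi-zero}), then conjugation symmetry shows $z_{-1,i}=z_{-1,-1-i}$ (Lemma~\ref{lem:nu-zero-surgery-conjugate}), and combining this with the sign-change formula of \cite[Theorem~1.16]{bs-lspace} forces $I^\#(X_{-1},\tilde\nu_{-1})=0$, hence $\dim I^\#(Y_0(K),\mu)=d+1$. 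The argument is symmetric in the two bundles, so it cannot, and does not try to, determine which bundle gives $d-1$. Your sketch invokes conjugation and the parity of $\cinvt$ in roughly the right spirit, but it skips both of these ruling-out steps and replaces them with an unsupported determination of the individual values, so it would not close the argument.
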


\begin{remark} \label{rmk:splice}Theorem \ref{thm:main-zero-surgery} (or, more precisely, Theorem \ref{thm:nu-zero-surgery} from which it follows), was critical for our proof in \cite{bs-splice} that the splice of two nontrivial knots in  homology sphere instanton L-spaces is never a homology sphere instanton L-space.
\end{remark}

\subsection{An instanton Floer analogue of Hom's epsilon invariant}

Studying the framed instanton homology of zero-surgeries also allows us to better understand the knots $K$ with $\cinvt(K) = 0$, as in the following:

\begin{theorem}[Proposition~\ref{prop:add-nu-zero}] \label{thm:main-add-nu-zero}
If $K$ and $L$ are knots in $S^3$ such that $\cinvt(K) = 0$, then $\cinvt(K\#L) = \cinvt(L)$.  In particular, $\ker(\cinvt)$ is a subgroup of the smooth concordance group $\cC$.
\end{theorem}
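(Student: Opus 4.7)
The plan is to combine the detection result of Theorem~\ref{thm:main-zero-surgery} with a splicing formula for the zero-surgery framed instanton homology of $K \# L$. By Theorem~\ref{thm:main-zero-surgery}, the hypothesis $\cinvt(K)=0$ is equivalent to the strict inequality $\dim I^\#(S^3_0(K)) \neq \dim I^\#(S^3_0(K),\mu_K)$, with the two sides being $r_0(K)$ and $r_0(K)+2$ in some order. Applied in the other direction to $K \# L$, the same theorem reduces the desired conclusion to two sub-problems: (a) deciding whether $\cinvt(K \# L)=0$ via the analogous comparison of $\dim I^\#(S^3_0(K \# L),\lambda)$ for $\lambda$ trivial versus $\lambda = \mu_{K \# L}$; and (b) in the case $\cinvt(K \# L) \neq 0$, pinning down its precise value by computing $\dim I^\#(S^3_{p/q}(K \# L))$ for well-chosen slopes $p/q$ and invoking Theorem~\ref{thm:dim-surgery}.

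The first and hardest step is to establish a formula expressing $\dim I^\#(S^3_0(K \# L),\lambda)$ in terms of the corresponding bundle-twisted invariants of $K$ and $L$ separately, for both bundle choices. The natural route is via the splicing decomposition $S^3_0(K \# L) = X_K \cup_{T^2} X_L$, in which the knot exteriors are glued along their boundary tori and then longitudinally Dehn-filled; the dimensions should then be accessible through the sutured instanton invariants $\KHI(K)$ and $\KHI(L)$, combined with a Künneth-type identification or the instanton surgery exact triangle applied along meridional slopes on each side. The qualitative output one hopes for is that the $\pm 2$ discrepancy between the two bundle-twisted dimensions of $S^3_0(K)$ propagates to a $\pm 2$ discrepancy of the two bundle-twisted dimensions of $S^3_0(K \# L)$ precisely when $L$ contributes no cancelling discrepancy, i.e.\ exactly when $\cinvt(L) = 0$.

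With such a formula in hand, Step 2 reads off $\cinvt(K \# L)$. When $\cinvt(L) = 0$, the discrepancy coming from $K$ persists, forcing $\dim I^\#(S^3_0(K \# L)) \neq \dim I^\#(S^3_0(K \# L),\mu_{K\#L})$, so by Theorem~\ref{thm:main-zero-surgery} we conclude $\cinvt(K \# L) = 0 = \cinvt(L)$. When $\cinvt(L) \neq 0$, the discrepancies cancel and the dimensions agree, so $\cinvt(K \# L) \neq 0$; the value is then identified with $\cinvt(L)$ by applying the splicing formula to $S^3_{p/q}(K \# L)$ at a few well-chosen slopes, matching the resulting dimensions against the formula $q r_0 + |p - q \cinvt|$ from Theorem~\ref{thm:dim-surgery}, and using the parity constraint $\cinvt(K \# L) \equiv r_0(K \# L) \pmod 2$ from Theorem~\ref{thm:main-nu-odd}.

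The main obstacle will be Step 1: obtaining a sufficiently precise splicing formula for $I^\#$ of $S^3_0(K \# L)$ that cleanly tracks the bundle dependence across the essential torus separating the two knot exteriors. Once this is in place, the subgroup claim follows immediately: closure of $\ker(\cinvt)$ under connected sum is the main statement applied with $\cinvt(L) = 0$, while closure under concordance inverses follows from the relation $\cinvt(-K) = -\cinvt(K)$ established in \cite{bs-concordance}.
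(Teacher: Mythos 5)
Your proposal hinges entirely on Step~1, a ``splicing formula'' computing $\dim I^\#(S^3_0(K\#L),\lambda)$ in terms of invariants of $K$ and $L$ separately, and you rightly flag it as the main obstacle --- but such a formula is not available and is not what the paper uses. Zero-surgery on $K\#L$ is indeed a splice of the two zero-surgeries along a torus, but instanton Floer homology has no clean K\"unneth or gluing formula along a torus that tracks bundle data with the precision you need; indeed, a closely related splice problem is the subject of a separate paper (\cite{bs-splice}, cited here at Remark~\ref{rmk:splice}), and even there the authors do not extract dimensions directly from a splice decomposition. So as written, Step~1 is a genuine gap, not merely a technical hurdle.

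The paper sidesteps it by proving something much weaker and much easier: Lemma~\ref{lem:zero-surgery-sum}, which transfers \emph{injectivity} of the 2-handle cobordism maps $I^\#(X_0(K),\nu_K)$ and $I^\#(X_0(L),\nu_L)$ to the corresponding map for $K\#L$. This follows from the composition law by embedding $X_0(K\#L)$ into the boundary connected sum $X_0(K)\natural X_0(L)$, so that $I^\#(X_0(K),\nu_K)\otimes I^\#(X_0(L),\nu_L)$ factors through $I^\#(X_0(K\#L),\nu_{K\#L})$. No knowledge of the dimension of $I^\#(S^3_0(K\#L),\lambda)$ is required --- only whether the map $I^\#(X_0(K\#L),\nu)$ is injective or zero, which together with the surgery exact triangles \eqref{eq:d+1-1}, \eqref{eq:d+1-3} and Corollary~\ref{cor:nu-0-twisted} determines the W/V shape and hence that $\cinvt(K\#L)=0$ when $\cinvt(L)=0$ (this is Proposition~\ref{prop:ker-nu-group}). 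The second structural tool you are missing is the quasi-morphism bound \eqref{eq:quasi-morphism}, $|\cinvt(K\#L)-\cinvt(K)-\cinvt(L)|\leq 1$: the paper handles the remaining case $\cinvt(L)\neq 0$ without any surgery dimension computations at all. Since $\cinvt(K)=0$ this bound gives $|\cinvt(K\#L)-\cinvt(L)|\leq 1$; if they differed, $\cinvt(K\#L)=\cinvt(L)\pm1$ would be even and hence zero by Theorem~\ref{thm:nu-odd}, and then applying Proposition~\ref{prop:ker-nu-group} to $(K\#L)\#\mirror{K}\sim L$ (both summands now have vanishing $\cinvt$) forces $\cinvt(L)=0$, a contradiction. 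Your proposal never invokes the quasi-morphism property, and your plan to pin down $\cinvt(K\#L)$ by computing $\dim I^\#(S^3_{p/q}(K\#L))$ for other slopes faces the same missing-splicing-formula problem as Step~1. In short: replace the hoped-for dimension formula with the injectivity-transfer lemma, and replace the surgery-slope hunt with the quasi-morphism bound plus the parity theorem.
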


Theorem~\ref{thm:main-add-nu-zero} then enables us to define an analogue
\[ \cepsilon(K) = 2\chominvt(K)-\cinvt(K) \]
of Hom's epsilon invariant $\epsilon$ in Heegaard Floer homology \cite{hom-epsilon,hom-ordering}.  We show in Proposition~\ref{prop:epsilon} that $\cepsilon$ shares many of the  properties that $\epsilon$ enjoys, and in particular leads to a total ordering on $\cC/\ker(\cinvt)$ which may be interesting.  We see it as more immediately useful for computing $\cinvt$ of connected sums: for example, Proposition~\ref{prop:epsilon} implies that $\cepsilon(K\#K)=\cepsilon(K)$ for all $K$, which quickly leads to the identity
\[ \chominvt(K) = \tfrac{1}{2}(\cinvt(K\#K)-\cinvt(K)). \]
We remark that this identity, and  Proposition~\ref{prop:epsilon} more generally, rely on the recent result of Ghosh, Li, and Wong \cite{glw} that $\chominvt(K)$ is always an integer.

\subsection*{Organization}

In Section~\ref{sec:conjugation}, we prove the promised conjugation symmetry, Theorem~\ref{thm:conjugation}, for the decomposition of cobordism maps in framed instanton homology.  In Section~\ref{sec:zero-surgery-maps}, we establish some notation and recall various results  about these cobordism maps. In Section~\ref{sec:nu-characterization}, we  reinterpret $\cinvt(K)$ in terms of the decomposition of the map associated to the trace of zero-surgery on $K$ (Proposition~\ref{prop:cinvt-count-y_i}), and refine the slice genus bound for $\cinvt(K)$ (Proposition~\ref{prop:max-y_j-bound}). We then use these results in Section~\ref{sec:parity} to prove our main result, Theorem~\ref{thm:main-nu-odd}, regarding the parity of $\cinvt(K)$. In Sections~\ref{sec:zero-surgery} and \ref{sec:bundles}, we apply these results to technical questions about the framed instanton homology of surgeries, proving Theorems   \ref{thm:main-surgery-lambda} and \ref{thm:main-zero-surgery}.  We use this apparatus in Section~\ref{sec:kernel-nu} to prove Theorem~\ref{thm:main-add-nu-zero} and to define our instanton analogue of Hom's epsilon invariant. The last two sections give broader topological applications of all of this technical work. In Section~\ref{sec:homology-cobordism}, we prove that either $\chominvt(K) > 0$ or $\cinvt(K) > 0$ implies $h(S^3_{-1}(K)) < 0$, and deduce Theorem~\ref{thm:homology-cobordism-main}. Finally, in Section~\ref{sec:almost-knots}, we study knots with $r_0(K)$  small, proving Theorem~\ref{thm:main-r0-small}; recall from the discussion above that this theorem then implies our bound on denominators of surgery slopes in Theorem \ref{thm:q-bound}.

\subsection*{Acknowledgments}

We thank Matt Hedden, Zhenkun Li, Tye Lidman,  Chris Scaduto, and Fan Ye for helpful conversations, and the referees for useful feedback on earlier versions of this paper. We particularly thank Zhenkun and Fan for telling us about their independent recent proof that if $\chominvt(K)$ is nonzero then $\cinvt(K)$ is odd \cite{li-ye}.

\section{Conjugation symmetry} \label{sec:conjugation}

Since this paper is a continuation of \cite{bs-concordance}, we will refer to \cite[\S2]{bs-concordance} for the necessary background on framed instanton homology.  See also \cite{km-yaft,scaduto} for more details.

The invariant $I^\#(Y,\lambda)$ is equipped with an absolute $\Z/2\Z$ grading \cite{froyshov,donaldson-book,scaduto} such that
\begin{equation} \label{eq:chi}
\chi(I^\#(Y,\lambda)) = \begin{cases} |H_1(Y;\Z)|, & b_1(Y)=0 \\ 0, & b_1(Y) > 0 \end{cases}
\end{equation}
\cite[Corollary~1.4]{scaduto}.  This can be lifted to a relative $\Z/4\Z$ grading on $I^\#(Y,\lambda)$, and if $[\lambda]=0$ in $H_1(Y;\Z/2\Z)$ then this lift can be made absolute \cite[\S7.3]{scaduto}.  If $[\lambda]$ is nonzero we can still make it absolute by choosing a Spin structure on $Y$, as described in \cite[\S2.2]{froyshov} or \cite[\S4]{scaduto-stoffregen}.  

There is also a natural \emph{eigenspace decomposition}
\[ I^\#(Y,\lambda) = \bigoplus_{s:H_2(Y)\to 2\Z} I^\#(Y,\lambda;s) \]
\cite{km-excision,bs-concordance}, in which each summand $I^\#(Y,\lambda;s)$ is the simultaneous generalized $s(h)$-eigenspace of a degree-$(-2)$ operator $\mu(h)$ as $h$ ranges over $H_2(Y;\Z)$.  Since each $\mu(h)$ has even degree, the $\Z/2\Z$ grading on $I^\#(Y,\lambda)$ descends to an absolute $\Z/2\Z$ grading on each eigenspace $I^\#(Y,\lambda;s)$.

With this setup in mind, we can define ``conjugation'' symmetries
\begin{equation} \label{eq:conjugation-y}
c_0,c_1,c_2,c_3: I^\#(Y,\lambda) \xrightarrow{\sim} I^\#(Y,\lambda)
\end{equation}
such that $c_i$ changes the signs in gradings $i+2$ and $i+3$ modulo 4, i.e.,
\begin{align*}
c_0(x_0,x_1,x_2,x_3) &= (x_0,x_1,-x_2,-x_3) \\
c_1(x_0,x_1,x_2,x_3) &= (-x_0,x_1,x_2,-x_3) \\
c_2(x_0,x_1,x_2,x_3) &= (-x_0,-x_1,x_2,x_3) \\
c_3(x_0,x_1,x_2,x_3) &= (x_0,-x_1,-x_2,x_3).
\end{align*}
(We note that $c_{i+2} = -c_i$ for all $i\in\Z/4\Z$.)  The fact that $\deg(\mu(h))=-2$ for all $h$ implies that each of these sends a generalized $\mu(h)$-eigenspace with eigenvalue $s(h)$ to one with eigenvalue $-s(h)$, giving an isomorphism
\begin{equation} \label{eq:conjugation-c_i}
c_i: I^\#(Y,\lambda;s) \xrightarrow{\sim} I^\#(Y,\lambda;-s)
\end{equation}
for all $i \in \Z/4\Z$ and all $s:H_2(Y;\Z)\to 2\Z$, with $c_i^2 = \id$, as in \cite[Theorem~2.25]{bs-lspace}.

Given a cobordism $(X,\nu): (Y_0,\lambda_0) \to (Y_1,\lambda_1)$ with absolute $\Z/4\Z$ gradings on each $I^\#(Y_i,\lambda_i)$, the map $I^\#(X,\nu)$ has a well-defined degree $d\in\Z/4\Z$, such that for any homogeneous elements
\begin{align*}
x &\in I^\#(Y_0,\lambda_0), &
y &\in I^\#(Y_1,\lambda_1),
\end{align*}
the matrix coefficient $\langle I^\#(X,\nu)(x), y\rangle$ is zero unless
\[ \gr(y) - \gr(x) \equiv d\pmod{4}. \]
An explicit formula for $d$ is given in \cite[\S7.3]{scaduto}, assuming that $[\lambda_j] = 0$ for each $j=0,1$, though we will not need it here.

Assuming that $b_1(X) = 0$, in \cite[Theorem~1.16]{bs-lspace} we defined a decomposition
\begin{equation*} 
I^\#(X,\nu) = \sum_{s: H_2(X)\to\Z} I^\#(X,\nu;s),
\end{equation*}
in which only finitely many summands are nonzero and each nonzero summand is a map
\[ I^\#(X,\nu;s): I^\#(Y_0,\lambda_0;s|_{Y_0}) \to I^\#(Y_1,\lambda_1;s|_{Y_1}) \]
between specific eigenspaces on either side.  Here $s|_{Y_i}$ denotes the composition
\[ H_2(Y_i) \to H_2(X) \xrightarrow{s} \Z, \]
where the first map is induced by inclusion.  The main result of this section is that these maps have the following ``conjugation symmetry'' property, comparable to the $\spc$ conjugation symmetry in other Floer homologies.

\begin{theorem} \label{thm:conjugation}
Let $(X,\nu): (Y_0,\lambda_0) \to (Y_1,\lambda_1)$ be a cobordism with $b_1(X)=0$, and suppose that we have fixed an absolute $\Z/4\Z$ grading on each $I^\#(Y_i,\lambda_i)$ which agrees with both the absolute $\Z/2\Z$ grading and relative $\Z/4\Z$ grading.  If $I^\#(X,\nu)$ has degree $d\in \Z/4\Z$, then the diagram
\[ \begin{tikzcd}
I^\#(Y_0,\lambda_0;s|_{Y_0}) \arrow[rr,"{I^\#(X,\nu;s)}"] \arrow[d,"\cong","c_k"'] &&
I^\#(Y_1,\lambda_1;s|_{Y_1}) \arrow[d,"c_{k+d}","\cong"'] \\
I^\#(Y_0,\lambda_0;-s|_{Y_0}) \arrow[rr,"{I^\#(X,\nu;-s)}"] &&
I^\#(Y_1,\lambda_1;-s|_{Y_1})
\end{tikzcd} \]
commutes for any $s: H_2(X;\Z) \to \Z$ and any $k\in\Z/4\Z$.
\end{theorem}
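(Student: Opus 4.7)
The plan is to establish the commutative diagram in four steps, the key insight being to interpret the decomposition $I^\#(X,\nu) = \sum_s I^\#(X,\nu;s)$ as a joint generalized eigenspace decomposition of a natural $H_2(X;\Z)$-action on $\Hom(I^\#(Y_0,\lambda_0), I^\#(Y_1,\lambda_1))$, and to show that the conjugation maps $c_k, c_{k+d}$ implement the eigenvalue reflection $s \leftrightarrow -s$ in this decomposition.

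First, I would verify the boundary anti-commutation $c_k \circ \mu(h) = -\mu(h) \circ c_k$ on $I^\#(Y,\lambda)$ for every $h \in H_2(Y;\Z)$. Writing $\epsilon_{k,i}$ for the sign by which $c_k$ acts on grading $i$ (so $\epsilon_{k,i} = -1$ precisely when $i \equiv k+2, k+3 \pmod 4$), the two compositions differ on a homogeneous element of grading $i$ by the ratio $\epsilon_{k,i-2}/\epsilon_{k,i}$, which equals $-1$ since the sets $\{k+2,k+3\}$ and $\{k,k+1\} \pmod 4$ are complementary. This immediately recovers the isomorphism $c_k: I^\#(Y,\lambda;s) \xrightarrow{\sim} I^\#(Y,\lambda;-s)$ from the discussion preceding the theorem.

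Next, for any $h \in H_2(X;\Z)$ the cobordism map satisfies the intertwining relation $\mu(h|_{Y_1}) \circ I^\#(X,\nu) = I^\#(X,\nu) \circ \mu(h|_{Y_0})$, reflecting the fact that $\mu(h)$ is realized as a cup product with a cohomology class that extends across $X$. This endows $\Hom(I^\#(Y_0,\lambda_0), I^\#(Y_1,\lambda_1))$ with a commuting family of endomorphisms indexed by $H_2(X;\Z)$, and the construction in \cite[\S3]{bs-lspace} identifies each summand $I^\#(X,\nu;s)$ as the joint generalized $s$-eigencomponent of $I^\#(X,\nu)$ under this action. The crux is then a short algebraic computation using the boundary anti-commutation: for each $h \in H_2(X;\Z)$, $\lambda \in \Z$, $f \in \Hom(I^\#(Y_0), I^\#(Y_1))$, and $N \geq 1$,
\[ (\mu(h|_{Y_1}) - \lambda)^N \circ (c_{k+d} \circ f \circ c_k) = (-1)^N \, c_{k+d} \circ (\mu(h|_{Y_1}) + \lambda)^N \circ f \circ c_k, \]
so that conjugating $f$ to $c_{k+d} \circ f \circ c_k$ flips the sign of its $\mu$-eigenvalue; applied to $f = I^\#(X,\nu;-s)$, this places $c_{k+d} \circ I^\#(X,\nu;-s) \circ c_k$ in the $s$-eigencomponent.

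Finally, a separate sign check using $\epsilon_{k+d,i+d} = \epsilon_{k,i}$ and the degree $d$ of the cobordism map gives the global identity $c_{k+d} \circ I^\#(X,\nu) \circ c_k = I^\#(X,\nu)$. Summing the previous step over $s$ yields $\sum_s c_{k+d} \circ I^\#(X,\nu;-s) \circ c_k = I^\#(X,\nu) = \sum_s I^\#(X,\nu;s)$, and uniqueness of the joint generalized eigenspace decomposition forces $c_{k+d} \circ I^\#(X,\nu;-s) \circ c_k = I^\#(X,\nu;s)$ for each $s$; relabeling $s \to -s$ and using $c_k^2 = \id$ delivers the asserted commutativity. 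The main obstacle is the intertwining step: making precise the $H_2(X;\Z)$-action on $\Hom$ via the $\mu$-operators, and identifying the decomposition of \cite{bs-lspace} with the resulting joint generalized eigenspace decomposition. Once this identification is in place, the conjugation symmetry follows formally from the boundary anti-commutation together with uniqueness of generalized eigenspace decompositions.
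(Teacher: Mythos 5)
The first, fourth, and fifth steps of your outline are sound: the anti-commutation $c_k\circ\mu(h)=-\mu(h)\circ c_k$ follows from the degree count you give, the sign check $\epsilon_{k+d,i+d}=\epsilon_{k,i}$ is correct, and the global identity $c_{k+d}\circ I^\#(X,\nu)\circ c_k=I^\#(X,\nu)$ holds for any map homogeneous of $\Z/4\Z$-degree $d$. The problem is the step you yourself flag as the main obstacle, and it is not a technicality but a genuine obstruction: the decomposition $I^\#(X,\nu)=\sum_s I^\#(X,\nu;s)$ of \cite[Theorem~1.16]{bs-lspace} is \emph{not} the joint generalized eigenspace decomposition of $I^\#(X,\nu)$ under the boundary $\mu$-operators acting on $\Hom(I^\#(Y_0,\lambda_0),I^\#(Y_1,\lambda_1))$, and in general cannot be recovered from them.

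The decomposition is indexed by homomorphisms $s\colon H_2(X;\Z)\to\Z$, and it arises from the structure theorem for the Donaldson series $\sD_X^\nu(h)$ as a function of $h\in H_2(X;\Z)$: the basic classes $s_j$ appear as exponents in a finite sum of exponentials $e^{s_j(h)}$, and the summands $I^\#(X,\nu;s_j)=\tfrac12 a_{\nu,s_j}$ are the corresponding coefficients. By contrast, the operators you propose to use, $\mu(h|_{Y_1})\circ(-)$ and $(-)\circ\mu(h|_{Y_0})$, depend on $h$ only through its restrictions to $H_2(Y_0)$ and $H_2(Y_1)$. If the restriction map $H_2(X;\Z)\to H_2(Y_0;\Z)\oplus H_2(Y_1;\Z)$ is not injective — and since $b_1(X)=0$ is the only hypothesis, this will typically fail, e.g.\ whenever $X$ has $b_2(X)>0$ and $Y_0,Y_1$ are rational homology spheres so that $H_2(Y_i)=0$ — then the operators you construct cannot distinguish two homomorphisms $s,s'$ with $s\neq s'$ but $s|_{Y_i}=s'|_{Y_i}$ for $i=0,1$. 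In that situation the joint generalized eigenspace decomposition you build on $\Hom$ is strictly coarser than the decomposition from the structure theorem, and your uniqueness argument in the final step only yields $c_{k+d}\circ I^\#(X,\nu;-s)\circ c_k=I^\#(X,\nu;s)$ modulo contributions from the $s'$ sharing the same boundary restrictions. This is not the theorem, and I do not see how to fill the gap along these lines: the ability of the Donaldson series to detect second-homology classes in the interior of $X$ invisible to the boundary is precisely what makes the finer decomposition nontrivial.

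The actual argument bypasses the boundary $\mu$-operators altogether and works directly with the structure theorem. One substitutes $h\mapsto -h$ in $\sD_X^\nu(h)$; on the one hand this flips each exponent $s_j\mapsto -s_j$ in \eqref{eq:structure-theorem}, and on the other hand the matrix coefficient $\langle\sD_X^\nu(-h)(x),y\rangle$ equals $\pm\langle\sD_X^\nu(h)(x),y\rangle$ with a sign determined only by $\gr(y)-\gr(x)-d\pmod 4$, because the $\mu$-classes contributing in degree $n$ come from moduli components of dimension $n\pmod 4$. Comparing coefficients of the (linearly independent) exponentials then gives the exact identity $\langle a_{\nu,-s}(x),y\rangle=\epsilon_{\delta-d}\langle a_{\nu,s}(x),y\rangle$ for every basic class $s$, from which $a_{\nu,-s}\circ c_k=c_{k+d}\circ a_{\nu,s}$ follows. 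The anti-commutation of $c_k$ with $\mu(h)$ that you establish in your first step then explains why the vertical arrows land in the correct eigenspaces, but it is not a substitute for the structure-theorem input.
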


\begin{proof}
We recall the construction of the decomposition
\[ I^\#(X,\nu) = \sum_{s:H_2(X;\Z)\to\Z} I^\#(X,\nu;s) \]
from \cite{bs-lspace}.  In \cite[Theorem~5.1]{bs-lspace} we proved an analogue of Kronheimer and Mrowka's structure theorem \cite{km-structure}, asserting that the Donaldson series
\[ \sD_X^\nu(h) = D_{X,\nu}\left({-} \otimes \left(e^h + \frac{[\pt]}{2}e^h\right)\right) \]
is equal to a finite sum of exponentials of the form
\begin{equation} \label{eq:structure-theorem}
\sD_X^\nu(h) = e^{Q(h)/2} \sum_{j=1}^r a_{\nu,s_j}e^{s_j(h)},
\end{equation}
for some basic classes $s_j: H_2(X;\Z) \to \Z$ and maps
\[ a_{\nu,s_j}: I^\#(Y_0,\lambda_0) \to I^\#(Y_1,\lambda_1) \]
with rational coefficients.  We then defined
\[ I^\#(X,\nu;s) = \frac{1}{2}a_{\nu,s}, \]
which is understood to be zero if $s$ is not a basic class.

With this in mind, we now consider the series
\begin{align*}
\sD_X^\nu(-h) &= D_{X,\nu}\left({-} \otimes \left(e^{-h} + \frac{[\pt]}{2}e^{-h}\right)\right) \\
&= \sum_{n=0}^\infty D_{X,\nu}\left({-} \otimes \left(\frac{(-h)^n}{n!} + \frac{[\pt]}{2}\frac{(-h)^n}{n!}\right) \right) \\
&= \sum_{n\text{ even}} D_{X,\nu}\left({-} \otimes \left(\frac{h^n}{n!} + \frac{[\pt]}{2}\frac{h^n}{n!}\right) \right) -
\sum_{n\text{ odd}} D_{X,\nu}\left({-} \otimes \left(\frac{h^n}{n!} + \frac{[\pt]}{2}\frac{h^n}{n!}\right) \right).
\end{align*}
Since the classes $h\in H_2(X)$ and $[\pt] \in H_0(X)$ correspond to classes in $H^2(\cB)$ and $H^4(\cB)$ respectively, where $\cB = \cA/\cG$ is the configuration space of connections mod gauge on $X$, the terms with $n$ even and $n$ odd come from components of the ASD moduli space on $X$ with dimension $0\pmod{4}$ and $2\pmod{4}$, respectively.

Given a pair of flat connections $\alpha$ on $Y_0$ and $\beta$ on $Y_1$, each component of the ASD moduli space $\mathcal{M}(\alpha,\beta)$ which defines the corresponding matrix coefficient of $I^\#(X,\nu)$ has dimension $\gr(\beta)-\gr(\alpha) - d \pmod{4}$.  So for any pair of generators
\begin{align*}
x &\in I^\#(Y_0,\lambda_0), &
y &\in I^\#(Y_1,\lambda_1)
\end{align*}
which are each homogeneous with respect to the $\Z/4\Z$ grading, if we set $\delta = \gr(y)-\gr(x)$, then we have
\begin{equation} \label{eq:conjugation-cases}
\langle\sD_X^\nu(-h)\big(x\big), y\rangle = \begin{cases}
\langle\sD_X^\nu(h)\big(x\big), y\rangle, & \delta\equiv d \hphantom{+2\ }\pmod{4} \\
-\langle\sD_X^\nu(h)\big(x\big), y\rangle, & \delta\equiv d+2\pmod{4} \\
0, & \text{otherwise}.
\end{cases}
\end{equation}

Replacing $h$ with $-h$ in \eqref{eq:structure-theorem}, and noting that $Q(h)=Q(-h)$, now gives us
\[ \sD_X^\nu(-h) = e^{Q(h)/2} \sum_{j=1}^r a_{\nu,s_j}e^{-s_j(h)} \]
and hence
\[ \langle\sD_X^\nu(-h)(x), y\rangle = e^{Q(h)/2} \sum_{j=1}^r \langle a_{\nu,s_j}(x),y\rangle e^{-s_j(h)} \]
as a function of $h\in H_2(X;\Z)$.  By \eqref{eq:conjugation-cases} this is equal to some $\epsilon_{\delta-d} \in \{\pm 1\}$ times
\[ \langle\sD_X^\nu(h)(x), y\rangle = e^{Q(h)/2} \sum_{j=1}^r \langle a_{\nu,s_j}(x),y\rangle e^{s_j(h)}, \]
where
\[ \epsilon_n = \begin{cases} \hphantom{-}1, & n\equiv 0\pmod{4} \\ -1, & n\equiv 2\pmod{4}, \end{cases} \]
noting that both sides are identically zero if $\delta-d$ is odd.  Thus by comparing coefficients we have
\[ \langle a_{\nu,-s}(x),y \rangle = \epsilon_{\delta-d} \langle a_{\nu,s}(x),y \rangle. \]
Since this holds for any homogeneous $x$ and $y$, where $\epsilon_{\delta-d}$ is determined by $\delta = \gr(y)-\gr(x)$ as above, we can now check that
\[ a_{\nu,-s} \circ c_k = c_{k+d} \circ a_{\nu,s} \]
for any $k\in\Z/4\Z$ and so the theorem follows.
\end{proof}

\section{Cobordism maps associated to surgeries} \label{sec:zero-surgery-maps}

In this section we define some notation and recall some results which will be useful in understanding the invariant $\cinvt(K)$ in the following sections.

We begin by introducing Floer's exact triangle \cite{floer-surgery}, as developed for $I^\#$ by Scaduto \cite[\S7.5]{scaduto}.  Let $K$ be a framed knot in $Y$, with meridian $\mu$, and fix a multicurve $\lambda \subset Y\setminus K$.  Then there is a surgery exact triangle of the form
\begin{equation} \label{eq:surgery-triangle-general}
\dots \to I^\#(Y,\lambda) \xrightarrow{I^\#(X,\nu)} I^\#(Y_0(K),\lambda \cup \mu) \xrightarrow{I^\#(W,\omega)} I^\#(Y_1(K),\lambda) \to \dots.
\end{equation}
Here $X$ is the result of attaching a 2-handle to $[0,1] \times Y$ along $\{1\}\times K$, and the properly embedded surface $\nu \subset X$ is the union of $[0,1]\times\lambda$ and a meridional disk bounded by $\{1\}\times \mu$.  Similarly, $W$ is the trace of $(-1)$-framed surgery on a meridian $\mu_K$ of $K$, or more precisely on its image in $Y_0(K)$, and then $\omega$ is the union of $[0,1] \times (\lambda\cup\mu)$ and a disk bounded by a meridian $\mu'$ of $\{1\}\times\mu_K$.  (The target of $I^\#(W,\omega)$ is thus more accurately described as $I^\#(Y_1,\lambda\cup\mu\cup\mu')$, but $\mu\cup\mu'$ is zero in $H_1(Y_1;\Z/2\Z)$ and so this group is isomorphic to $I^\#(Y_1,\lambda)$.)

Now given a knot $K \subset S^3$, we define cobordisms
\[ X_n: S^3 \to S^3_n(K) \]
as the trace of $n$-surgery on $K$, for all $n \in \Z$.  Then the exact triangle \eqref{eq:surgery-triangle-general} specializes to
\begin{equation} \label{eq:surgery-triangle}
\dots \to I^\#(S^3) \xrightarrow{I^\#(X_n,\nu_n)} I^\#(S^3_n(K)) \xrightarrow{I^\#(W_{n+1},\omega_{n+1})} I^\#(S^3_{n+1}(K)) \to \dots,
\end{equation}
by taking $\lambda = \mu$ if $n$ is even and $\lambda = 0$ if $n$ is odd.  In the case $n=0$, we will also take $\lambda=0$ to get the triangle
\begin{equation} \label{eq:surgery-triangle-0}
\dots \to I^\#(S^3) \xrightarrow{I^\#(X_0,\tilde\nu_0)} I^\#(S^3_0(K),\mu) \xrightarrow{I^\#(W_1,\tilde\omega_1)} I^\#(S^3_1(K)) \to \dots,
\end{equation}
where $\mu$ is the image in $S^3_0(K)$ of a meridian of $K$ and $\tilde\nu_0$ is a properly embedded disk with boundary $\{1\}\times\mu$.

We will be especially interested in the decomposition of $I^\#(X_0,\tilde\nu_0)$, and of the maps $I^\#(X_n,\nu_n)$ for $n \geq 1$, into summands indexed by homomorphisms $H_2(X_n;\Z) \to \Z$, as in \cite{bs-lspace}.  For all $i \in \Z$, we thus define a homomorphism
\begin{equation} \label{eq:def-si}
s_i: H_2(X_0;\Z) \to \Z \qquad\text{by}\qquad s_i([\hat\Sigma]) = 2i,
\end{equation}
where $\hat\Sigma \subset X_0$ is the surface formed by gluing a Seifert surface for $\Sigma$ to the core of the $0$-framed 2-handle; and we let
\begin{align}
t_i = s_i|_{S^3_0(K)}: H_2(S^3_0(K);\Z) &\to 2\Z \label{eq:def-ti} \\
[\hat\Sigma] &\mapsto 2i. \nonumber
\end{align}
We can then define elements
\begin{align}
y_i = I^\#(X_0,\tilde{\nu}_0; s_i)\big(\bfone) &\in I^\#_\godd(S^3_0(K),\mu;t_i),  \label{eq:y_i} \\
y'_i = I^\#(X_0,\nu_0; s_i)\big(\bfone) &\in I^\#_\godd(S^3_0(K);t_i), \label{eq:tilde-y_i}
\end{align}
where $\bfone$ is a generator of $I^\#(S^3)$; if $K$ is nontrivial, then by \cite[Corollary~7.6]{km-excision}, these can only be nonzero when $|i| \leq g(K)-1$.

The following lemma will be our main application of Theorem~\ref{thm:conjugation}.

\begin{lemma} \label{lem:conjugation-y_i}
For any $i\in\Z$, we have $y_i \neq 0$ if and only if $y_{-i} \neq 0$.
\end{lemma}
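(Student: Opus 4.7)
The plan is to apply Theorem~\ref{thm:conjugation} directly to the cobordism $(X_0, \tilde\nu_0): (S^3, 0) \to (S^3_0(K), \mu)$, which satisfies $b_1(X_0)=0$.  First I would fix absolute $\Z/4\Z$ gradings on $I^\#(S^3)$ and on $I^\#(S^3_0(K),\mu)$ that are compatible with the $\Z/2\Z$ and relative $\Z/4\Z$ gradings; for the latter, this requires choosing a Spin structure on $S^3_0(K)$, since $[\mu]$ generates $H_1(S^3_0(K);\Z/2\Z)$.  Let $d \in \Z/4\Z$ denote the degree of $I^\#(X_0, \tilde\nu_0)$ with respect to these gradings.

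Next I would record three elementary observations about how the involution $s\mapsto -s$ interacts with the homomorphisms $s_i$ from \eqref{eq:def-si}.  Because $H_2(S^3) = 0$, the restriction $s_i|_{S^3}$ is the zero map, so trivially $s_i|_{S^3} = -s_i|_{S^3}$.  Because $-s_i([\hat\Sigma]) = -2i = s_{-i}([\hat\Sigma])$, we have $-s_i = s_{-i}$ as homomorphisms $H_2(X_0;\Z)\to\Z$, and consequently $-t_i = t_{-i}$.  Thus Theorem~\ref{thm:conjugation} specializes to a commuting square whose horizontal arrows are the eigenspace summands $I^\#(X_0,\tilde\nu_0;s_i)$ and $I^\#(X_0,\tilde\nu_0;s_{-i})$, and whose vertical arrows are $c_k$ on the source $I^\#(S^3)$ and $c_{k+d}$ on the target (carrying $I^\#(S^3_0(K),\mu;t_i)$ isomorphically to $I^\#(S^3_0(K),\mu;t_{-i})$ by \eqref{eq:conjugation-c_i}), for every $k \in \Z/4\Z$.

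Finally, $I^\#(S^3) \cong \Q$ is one-dimensional and supported in a single $\Z/4\Z$ grading $g$, so one can choose $k \in \Z/4\Z$ with $g \notin \{k+2,\, k+3\}$; then $c_k$ acts on $I^\#(S^3)$ as the identity, so $c_k(\bfone) = \bfone$.  For this choice of $k$, commutativity of the square gives
\[ y_{-i} \;=\; I^\#(X_0,\tilde\nu_0;s_{-i})(\bfone) \;=\; c_{k+d}\bigl(I^\#(X_0,\tilde\nu_0;s_i)(\bfone)\bigr) \;=\; c_{k+d}(y_i), \]
and because $c_{k+d}$ is an isomorphism between the relevant eigenspaces, the desired equivalence $y_i \neq 0 \iff y_{-i} \neq 0$ drops out.

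No substantive obstacle is expected: this is essentially a direct unpacking of Theorem~\ref{thm:conjugation} in the special case where the incoming end of the cobordism is $S^3$, and the whole point is that one-dimensionality of $I^\#(S^3)$ lets one choose $k$ annihilating all the sign ambiguity on the source.  The only mild bookkeeping concern is the absolute $\Z/4\Z$ lift on the $\mu$-twisted side, which is handled by the Spin-structure construction recalled at the start of Section~\ref{sec:conjugation}.
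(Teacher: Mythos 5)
Your proposal is correct and follows essentially the same argument as the paper: apply Theorem~\ref{thm:conjugation} to $(X_0,\tilde\nu_0)$, observe that $-s_i = s_{-i}$, use that $\bfone \in I^\#(S^3)$ is fixed by an appropriate $c_k$, and conclude that $c_{k+d}(y_i) = y_{-i}$ with $c_{k+d}$ an isomorphism. The only cosmetic difference is that the paper simply notes $\bfone$ has grading $0$ (so $c_0(\bfone)=\bfone$) and takes $k=0$, whereas you allow $k$ to vary and pick one fixing $\bfone$; this is the same idea with a bit more generality.
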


\begin{proof}
Fixing an absolute $\Z/4\Z$ grading on $I^\#(S^3_0(K),\mu)$ and letting $d\in\Z/4\Z$ denote the resulting degree of the cobordism map $I^\#(X_0,\tilde\nu_0)$, Theorem~\ref{thm:conjugation} says that
\[ c_d(y_i) = c_d(I^\#(X_0,\tilde\nu_0;s_i)(\bfone)) = I^\#(X_0,\tilde\nu_0;s_{-i})(c_0(\bfone)) = y_{-i}, \]
since the generator $\bfone \in I^\#(S^3)$ has grading $0$ and is thus fixed by $c_0$.  Since $c_d$ is an isomorphism, the lemma follows.
\end{proof}

Following \cite[\S7]{bs-lspace}, we let
\[ \Sigma_n \subset X_n \]
be the generator of $H_2(X_n;\Z) \cong \Z$, built by gluing a Seifert surface for $K$ to a core of the $n$-framed 2-handle.  We observed in \cite[\S7.1]{bs-lspace} that $H_2(W_{n+1};\Z) \cong \Z$ is generated by a surface $F_{n+1}$, and that the diffeomorphism
\begin{equation} \label{eq:triangle-blowup}
X_n \cup_{S^3_n(K)} W_{n+1} \cong X_{n+1} \# \cptwo
\end{equation}
identifies
\begin{align} \label{eq:X-W-gens}
[\Sigma_n] &\longleftrightarrow [\Sigma_{n+1}] - [E], &
[F_{n+1}] &\longleftrightarrow [\Sigma_{n+1}] - (n+1)[E]
\end{align}
where $E \subset H_2(\cptwo;\Z)$ is the exceptional sphere.  We define homomorphisms
\begin{align}
s_{n,i}: H_2(X_n;\Z) &\to \Z &
u_{n+1,j}: H_2(W_{n+1};\Z) &\to \Z \label{eq:ts-homomorphisms} \\
[\Sigma_n] &\mapsto 2i-n &
[F_{n+1}] &\mapsto 2j, \nonumber
\end{align}
noting that the homomorphisms labeled $s_i$ and $t_i$ in \eqref{eq:def-si} and \eqref{eq:def-ti} coincide with $s_{0,i}$ and $s_{0,i}|_{S^3_0(K)}$, so that in this notation
\[ y_i = I^\#(X_0,\tilde\nu_0; s_{0,i})(\bfone); \]
and more generally, for $n \geq 1$ we define
\[ z_{n,i} = I^\#(X_n,\nu_n;s_{n,i})\big(\bfone\big) \in I^\#(S^3_n(K)). \]
For $n \geq 1$, the adjunction inequality says that
\begin{equation} \label{eq:z-adjunction-bounds}
z_{n,i} \neq 0 \ \Longrightarrow\ 1-g(K)+n \leq i \leq g(K)-1,
\end{equation}
as observed in \cite[Equation~(7.5)]{bs-lspace}.

\begin{lemma}[{\cite[Lemma~7.3]{bs-lspace}}] \label{lem:X-W-composition}
There are universal constants $\epsilon_n \in \{\pm 1\}$ such that
\begin{multline*}
I^\#(W_1,\tilde\omega_1;u_{1,j}) \circ I^\#(X_0,\tilde\nu_0;s_{0,i}) \\
= \frac{\epsilon_0}{2} \cdot (-1)^i \begin{cases}
I^\#(X_1,\nu_1;s_{1,i}) + I^\#(X_1,\nu_1;s_{1,i+1}), & j=i \\
0 & \text{otherwise}, \end{cases}
\end{multline*}
and
\begin{multline*}
I^\#(W_{n+1},\omega_{n+1};u_{n+1,j}) \circ I^\#(X_n,\nu_n;s_{n,i}) \\
= \frac{\epsilon_n}{2} \cdot \begin{cases}
\hphantom{-}I^\#(X_{n+1},\nu_{n+1};s_{n+1,i}), & j=i \\
-I^\#(X_{n+1},\nu_{n+1};s_{n+1,i+1}), & j=i-n \\
\hphantom{-}0 & \text{otherwise}
\end{cases}
\end{multline*}
for all $n\geq 1$ and all $i,j$.
\end{lemma}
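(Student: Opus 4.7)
The plan is to reduce the composition to a single cobordism map via functoriality, identify it via the diffeomorphism \eqref{eq:triangle-blowup} with a map on $X_{n+1}\#\cptwo$, and then apply the blow-up formula for the framed instanton Donaldson series.

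First I would invoke functoriality to rewrite the left-hand side as
\[
I^\#\bigl(X_n \cup_{S^3_n(K)} W_{n+1},\ \nu_n \cup \omega_{n+1}\bigr).
\]
Using the diffeomorphism \eqref{eq:triangle-blowup}, this becomes a cobordism map on $X_{n+1}\#\cptwo$, and inspection of the 2-handle picture shows that the combined surface corresponds to $\nu_{n+1}$ (together with material supported in the $\cptwo$-summand that does not alter the underlying $U(2)$-bundle). Passing to the structure theorem \eqref{eq:structure-theorem}, I would then expand the Donaldson series $\sD^{\nu_{n+1}}_{X_{n+1}\#\cptwo}$ by means of the Fintushel--Stern blow-up formula: its basic classes are $\tilde s \pm E^\ast$ as $\tilde s$ ranges over basic classes of $(X_{n+1},\nu_{n+1})$, with coefficients that are $\pm\tfrac12\,a_{\nu_{n+1},\tilde s}$ up to a universal sign $\epsilon_n \in \{\pm 1\}$ coming from the choice of orientation data.

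Using the identifications \eqref{eq:X-W-gens}, if a basic class $s$ of $X_{n+1}\#\cptwo$ has $s([\Sigma_{n+1}]) = 2i' - (n+1)$ and $s([E]) = \epsilon \in \{\pm 1\}$, then
\[
s([\Sigma_n]) = 2i' - (n+1) - \epsilon, \qquad s([F_{n+1}]) = 2i' - (n+1)(1+\epsilon).
\]
Demanding $s([\Sigma_n]) = 2i-n$ forces exactly two solutions: $(i',\epsilon) = (i,-1)$, which gives $2j = s([F_{n+1}]) = 2i$; and $(i',\epsilon) = (i+1,+1)$, which gives $2j = 2(i-n)$. For $n\geq 1$ these two solutions contribute to different values of $j$, producing the two cases in the lemma, with the opposite signs from the blow-up formula accounting for the relative sign between the $s_{n+1,i}$ and $s_{n+1,i+1}$ terms. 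For $n = 0$ both solutions contribute to the same $j = i$, so the two terms sum; the common factor $(-1)^i$ arises from the parity of $i'$ running over $\{i, i+1\}$ in the alternating sign pattern of the blow-up coefficients, once the two $\epsilon$-contributions have been combined.

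The main obstacle is keeping careful track of the various signs, especially the $(-1)^i$ prefactor in the $n = 0$ case and the universal $\epsilon_n$. This reduces to a mechanical bookkeeping with the blow-up formula for the $\mu$-extended Donaldson series, once the correspondence between $\nu_n \cup \omega_{n+1}$ and $\nu_{n+1}$ inside $X_{n+1}\#\cptwo$ is pinned down; no new ideas are needed beyond those used to set up the decomposition \eqref{eq:structure-theorem} in the first place.
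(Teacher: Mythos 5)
Your overall strategy is exactly what the paper points to (the paper itself does not reprove this---it cites \cite[Lemma~7.3]{bs-lspace} and just remarks that the result follows from \eqref{eq:X-W-gens} together with the blow-up formula for cobordism maps). The reduction via functoriality to a single cobordism on $X_n \cup_{S^3_n(K)} W_{n+1}$, the identification with $X_{n+1}\#\cptwo$, and the application of the blow-up formula are all the right moves, and your homology bookkeeping is correct: imposing $s|_{X_n}=s_{n,i}$ and $s([E])=\pm 1$ does force exactly the two basic classes $s_{n+1,i}$ (with $s([E])=-1$, giving $u_{n+1,i}$ on $W_{n+1}$) and $s_{n+1,i+1}$ (with $s([E])=+1$, giving $u_{n+1,i-n}$), and these land on different $j$ values precisely when $n\geq 1$.

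Where the proposal is fuzzy is the sign bookkeeping, and in two specific places. First, you write that the material in the $\cptwo$-summand ``does not alter the underlying $U(2)$-bundle''; but the disk component of $\omega_{n+1}$ (resp.\ $\tilde\omega_1$) meets the exceptional sphere $E$, and this nontriviality of the bundle along $E$ is exactly what forces the $\sinh$-type (odd) blow-up formula rather than the $\cosh$-type one. The factor $\tfrac{1}{2}$ and the relative sign between the $s([E])=\pm1$ contributions come from $\sinh(E)=\tfrac12(e^E - e^{-E})$, so this is not an ignorable feature of the setup. Second, the explanation of the $(-1)^i$ prefactor when $n=0$ is not really correct as stated: it does not come from ``the parity of $i'$ running over $\{i,i+1\}$ in the alternating sign pattern of the blow-up coefficients.'' The blow-up formula alone would hand you coefficients of opposite sign, i.e.\ something like $\tfrac12\bigl(I^\#(X_1,\nu_1;s_{1,i}) - I^\#(X_1,\nu_1;s_{1,i+1})\bigr)$, not a common $(-1)^i$ sitting in front of a \emph{sum}. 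The extra $(-1)^{i'}$ that converts this into $\tfrac12(-1)^i\bigl(\cdots + \cdots\bigr)$ comes from the sign-change formula of \cite[Theorem~1.16]{bs-lspace}: after blowing down, the pushed-forward surface $\tilde\nu_0\cup\tilde\omega_1$ differs from $\nu_1$ (plus $E$) by a class $\alpha\in H_2(X_1)$, and the resulting sign $(-1)^{\frac12(s(\alpha)+\alpha\cdot\alpha)+\nu_1\cdot\alpha}$ depends on $s(\alpha)$, hence on $i'$. This is the mechanism that both flips the relative minus sign and produces the overall $(-1)^i$; without invoking it your $n=0$ case does not close. Beyond that the proposal follows the intended proof and the remaining work is indeed mechanical, as you say.
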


The proof of Lemma~\ref{lem:X-W-composition} follows from careful application of \eqref{eq:X-W-gens} and the blow-up formula for cobordism maps.  Applying it to the generator $\bfone \in I^\#(S^3)$ and summing over all $j$ gives the following.

\begin{lemma} \label{lem:image-zni}
There are universal constants $\epsilon_n \in \{\pm 1\}$ such that
\[ I^\#(W_1,\tilde\omega_1)(y_i) = \frac{\epsilon_0}{2}\cdot (-1)^i(z_{1,i} + z_{1,i+1}) \]
for all $i$, and
\[ I^\#(W_{n+1},\omega_{n+1})(z_{n,i}) = \frac{\epsilon_n}{2}(z_{n+1,i}-z_{n+1,i+1}) \]
for all $n \geq 1$ and all $i$.
\end{lemma}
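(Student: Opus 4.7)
The plan is to observe that this lemma is essentially just the result of evaluating both sides of Lemma~\ref{lem:X-W-composition} on the generator $\bfone \in I^\#(S^3)$ and summing the resulting identities over the index $j$ labeling the eigenspace decomposition of the map induced by the 2-handle cobordism $W_{n+1}$.

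The key preliminary is to recall that the decomposition theorem from \cite[Theorem~1.16]{bs-lspace} expresses the full cobordism map as the finite sum of its components; in particular,
\[ I^\#(W_1,\tilde\omega_1) = \sum_{j\in\Z} I^\#(W_1,\tilde\omega_1;u_{1,j}) \qquad\text{and}\qquad I^\#(W_{n+1},\omega_{n+1}) = \sum_{j\in\Z} I^\#(W_{n+1},\omega_{n+1};u_{n+1,j}) \]
for $n\geq 1$, with all but finitely many summands zero. Combined with the definitions
\[ y_i = I^\#(X_0,\tilde\nu_0;s_{0,i})(\bfone), \qquad z_{n,i} = I^\#(X_n,\nu_n;s_{n,i})(\bfone), \]
this rewrites each left-hand side of the claimed identities as a sum over $j$ of terms of the form appearing in Lemma~\ref{lem:X-W-composition}, each evaluated on $\bfone$.

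For the $n=0$ case I would then quote the first part of Lemma~\ref{lem:X-W-composition}: only the $j=i$ summand is nonzero, and the substitution yields
\[ I^\#(W_1,\tilde\omega_1)(y_i) = \tfrac{\epsilon_0}{2}(-1)^i\bigl( I^\#(X_1,\nu_1;s_{1,i})(\bfone) + I^\#(X_1,\nu_1;s_{1,i+1})(\bfone) \bigr) = \tfrac{\epsilon_0}{2}(-1)^i(z_{1,i} + z_{1,i+1}). \]
For $n \geq 1$, the second part of Lemma~\ref{lem:X-W-composition} says that only the $j=i$ and $j=i-n$ summands contribute, and substituting gives
\[ I^\#(W_{n+1},\omega_{n+1})(z_{n,i}) = \tfrac{\epsilon_n}{2}\bigl( I^\#(X_{n+1},\nu_{n+1};s_{n+1,i})(\bfone) - I^\#(X_{n+1},\nu_{n+1};s_{n+1,i+1})(\bfone) \bigr), \]
which is $\tfrac{\epsilon_n}{2}(z_{n+1,i} - z_{n+1,i+1})$ by definition of the $z_{n+1,\bullet}$. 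There is no real obstacle once Lemma~\ref{lem:X-W-composition} is available: the argument is a purely formal bookkeeping step, and the only subtlety — that the sum of eigenspace components genuinely recovers the total cobordism map — is already encoded in the structure theorem underlying the decomposition \cite[Theorem~1.16]{bs-lspace}.
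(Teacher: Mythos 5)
Your proposal is correct and matches the paper's own argument, which the paper states explicitly right after Lemma~\ref{lem:X-W-composition}: one applies that lemma to the generator $\bfone \in I^\#(S^3)$ and sums over the index $j$, using the fact that the cobordism map is the (finite) sum of its components $I^\#(\cdot;\cdot)$ and that $z_{n+1,\bullet}$ is defined by evaluating those components on $\bfone$.
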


\section{A characterization of the $\cinvt$ invariant} \label{sec:nu-characterization}

In \cite{bs-concordance}, we defined the invariant $\cinvt(K)$ in terms of a simpler invariant,
\[ \cinvt(K) = N(K) - N(\mirror{K}), \]
where $N(K)$ is a nonnegative integer.  We defined $N(K)$ to be the least nonnegative integer $n$ such that $I^\#(X_n,\nu_n) = 0$, when taken with coefficients in a field of characteristic zero.  In this section we will give a new characterization of $N(K)$, and thus (in Proposition~\ref{prop:cinvt-count-y_i}) a useful description of $\cinvt(K)$ whenever $\cinvt(K)$ is positive.

Here and throughout this section we use the same notation as in Section~\ref{sec:zero-surgery-maps}.  For convenience we will use
\begin{align}
F_n &= I^\#(X_n,\nu_n) &
\tilde{F}_0 &= I^\#(X_0,\tilde\nu_0) \label{eq:def-F} \\
G_{n+1} &= I^\#(W_{n+1},\omega_{n+1}) &
\tilde{G}_1 &= I^\#(W_1,\tilde\omega_1)  \label{eq:def-G}
\end{align}
to denote the maps in the exact triangles \eqref{eq:surgery-triangle} and \eqref{eq:surgery-triangle-0} respectively.

\begin{proposition}[{\cite[Proposition~3.3]{bs-concordance}}] \label{prop:N-mirror}
Either $N(K) = N(\mirror{K}) = 1$, or at least one of $N(K)$ and $N(\mirror{K})$ is zero.  Moreover, if $\cinvt(K) \neq 0$ (so that $N(K) \neq N(\mirror{K})$) then
\[ \dim I^\#(S^3_0(K)) = \dim I^\#(S^3_0(K),\mu). \]
\end{proposition}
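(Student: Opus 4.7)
The plan is to use dimension counting in the surgery exact triangles combined with orientation-reversal symmetry and the uniqueness of the basic-class decomposition of cobordism maps.

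From the triangle \eqref{eq:surgery-triangle}, writing $d_n := \dim I^\#(S^3_n(K))$, we have $d_{n+1} - d_n = 1 - 2\dim \img F_n \in \{-1, +1\}$, so $F_n = 0$ iff $d_{n+1} - d_n = +1$.  The core of part (a) is a propagation of vanishing: \emph{if $F_n^K = 0$ for some $n \geq 1$, then $F_{n+1}^K = 0$.}  By uniqueness of the basic-class decomposition of the Donaldson series underlying \cite[Theorem~1.16]{bs-lspace}, the vanishing $F_n = \sum_i I^\#(X_n, \nu_n; s_{n,i}) = 0$ forces each summand, and hence each $z_{n,i}$, to vanish.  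Applying $G_{n+1}$ and using Lemma~\ref{lem:image-zni}, the relation $G_{n+1}(z_{n,i}) = \frac{\epsilon_n}{2}(z_{n+1,i} - z_{n+1,i+1}) = 0$ gives $z_{n+1,i} = z_{n+1,i+1}$ for every $i$; combined with the adjunction vanishing $z_{n+1,i} = 0$ for $|i| \gg 0$ from \eqref{eq:z-adjunction-bounds}, this forces every $z_{n+1,i} = 0$, hence $F_{n+1}^K = 0$.  An analogous argument using the untwisted variant of Lemma~\ref{lem:image-zni} handles the step from $F_0$ to $F_1$.  Thus $\{n \geq 0 : F_n^K = 0\}$ is a tail $[N(K), \infty)$, and symmetrically for $\mirror{K}$.

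Given this tail structure, if both $N(K), N(\mirror{K}) \geq 1$, the sequence $d_n$ strictly decreases on $[0, N(K)]$ and strictly increases beyond, and by the orientation-reversal identity $d_{-n}^K = d_n^{\mirror{K}}$ exhibits mirror behavior on the negative side, producing a local maximum at $n = 0$.  Matching the linear asymptotics $d_n \sim |n| + r_0(K)$ on both sides forces $d_0 = r_0(K) + 2N(K) = r_0(K) + 2N(\mirror{K})$, so $N(K) = N(\mirror{K})$.  The main obstacle is to rule out this common value being $\geq 2$; I expect this to require a finer bound of the form $\dim I^\#(S^3_0(K)) \leq r_0(K) + 2$ when $\cinvt(K) = 0$, or an equivalent grading/parity constraint on the cobordism-map summands, which excludes the ``peak-of-height-$\geq 2$'' pattern.

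For part (b), subtracting the dimension identities arising from \eqref{eq:surgery-triangle} (at $n = 0$, $\lambda = \mu$) and \eqref{eq:surgery-triangle-0} (at $n = 0$, $\lambda = 0$), both of which target $I^\#(S^3_1(K))$, yields
\[ \dim I^\#(S^3_0(K), \mu) - \dim I^\#(S^3_0(K)) = 2(\tilde f - f), \]
where $f, \tilde f \in \{0, 1\}$ are the dimensions of $\img F_0$ and $\img \tilde F_0$.  Applying the same identity to $\mirror{K}$ and using the orientation-reversal equalities $\dim I^\#(S^3_0(K), \mu) = \dim I^\#(S^3_0(\mirror{K}), \mu)$ and $\dim I^\#(S^3_0(K)) = \dim I^\#(S^3_0(\mirror{K}))$ gives $\tilde f^K - f^K = \tilde f^{\mirror{K}} - f^{\mirror{K}}$.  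If $\cinvt(K) > 0$, then $N(K) \geq 1$ and, by part (a), $N(\mirror{K}) = 0$, so $f^K = 1$ and $f^{\mirror{K}} = 0$; the only solution with $\tilde f^K, \tilde f^{\mirror{K}} \in \{0, 1\}$ is $\tilde f^K = 1 = f^K$ and $\tilde f^{\mirror{K}} = 0 = f^{\mirror{K}}$.  The case $\cinvt(K) < 0$ is symmetric, completing part (b).
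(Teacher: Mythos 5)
The proposition is quoted from \cite[Proposition~3.3]{bs-concordance}; the present paper gives no proof, but a hint at the actual mechanism appears in the proof of Lemma~\ref{lem:nu-zero-surgery-not-d+1}, where the authors write ``Exactly as in [Proposition~3.3, bs-concordance], the composite cobordisms $I^\#(X_0,\tilde\nu_0)\circ I^\#(Z_0,\zeta_0)$ and $I^\#(X_0,\nu_0)\circ I^\#(Z_0,\tilde\zeta_0)$ are both identically zero, because they represent cobordisms containing an embedded 2-sphere equipped with a nontrivial bundle.''  The real argument juggles the four exact triangles through $I^\#(S^3_0(K))$ and $I^\#(S^3_0(K),\mu)$ with the $\pm1$-surgeries and exploits this sphere-vanishing of the cross composites.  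Your approach is genuinely different, and it has gaps.

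The central problem is the ``propagation of vanishing'' step.  You assert that $F_n = \sum_i I^\#(X_n,\nu_n;s_{n,i}) = 0$ forces each summand (and hence each $z_{n,i}$) to vanish ``by uniqueness of the basic-class decomposition.''  This is false.  Uniqueness of the decomposition means that if the full Donaldson series $\sD_{X_n}^{\nu_n}(h)$ vanishes identically as a function of $h$ then each $a_{\nu,s_j}$ vanishes.  But $F_n=0$ is the vanishing of a single specialization of that series, not the whole series, so it is only one linear relation among the summands.  Moreover, for $n \geq 1$ the target $I^\#(S^3_n(K))$ is a rational homology sphere with no nontrivial eigenspace decomposition to separate the $I^\#(X_n,\nu_n;s_{n,i})$; they all land in the same space, and there is nothing preventing cancellation.  (Contrast with the maps into $I^\#(S^3_0(K),\mu)$, whose targets \emph{are} separated by eigenspaces, which is exactly why the $y_i$ are useful and the $z_{n,i}$ for $n\geq1$ are not.)  Without the tail structure, the ``W-shape'' picture on which the rest of your part~(a) rests does not follow.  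Two further issues: your asymptotic-matching step implicitly uses $r_0(K)=r_0(\mirror{K})$, which is not available at this stage of the development and is itself downstream of this proposition; and you explicitly acknowledge a gap in ruling out $N(K)=N(\mirror{K})\geq 2$, which is precisely where the sphere-vanishing argument does the work.  Your part~(b) is fine as a deduction from part~(a), but since part~(a) is not established, the proposal as a whole does not constitute a proof.
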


\begin{proposition}[{\cite[Lemma~7.5]{bs-lspace}}] \label{prop:ker-G}
Let $G_{n+1}$ and $\tilde{G}_1$ be the maps indicated in \eqref{eq:def-G}, which have even degree with respect to the $\Z/2\Z$ grading on $I^\#$.  For all $n > 0$, the kernel of the map
\[ G_n \circ G_{n-1} \circ \dots \circ G_2 \circ \tilde{G}_1: I^\#(S^3_0(K),\mu) \to I^\#(S^3_n(K)) \]
is a subspace of the span of the elements $y_i$, and likewise the kernel of
\[ G_n \circ G_{n-1} \circ \dots \circ G_2 \circ G_1: I^\#(S^3_0(K)) \to I^\#(S^3_n(K)) \]
lies in the span of the elements $y'_i$.  In both cases, equality holds for all $n \geq 2g(K)-1$.
\end{proposition}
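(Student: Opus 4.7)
Write $H_n := G_n \circ \cdots \circ G_2 \circ \tilde G_1$ and $V := \Span\{y_i\}$. The two assertions are parallel, and the plan is to establish the first; the second follows by the same argument, using the $n=0$ case of \eqref{eq:surgery-triangle} in place of \eqref{eq:surgery-triangle-0} and $y'_i$ in place of $y_i$.

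The inclusion $V \subseteq \ker H_n$ for $n \geq 2g(K)-1$ reduces to a direct computation. Iterating Lemma~\ref{lem:image-zni} expresses $H_n(y_i)$ as an explicit rational combination of $z_{n,i}, z_{n,i+1}, \ldots, z_{n,i+n}$, obtained by convolving the pattern $(1,1)$ from $\tilde G_1(y_i) \propto z_{1,i} + z_{1,i+1}$ with $n-1$ copies of the pattern $(1,-1)$ from $G_{m+1}(z_{m,j}) \propto z_{m+1,j} - z_{m+1,j+1}$. Each $z_{n,j}$ appearing vanishes by the adjunction bound \eqref{eq:z-adjunction-bounds} unless $1 - g(K) + n \leq j \leq g(K) - 1$, and for $n \geq 2g(K)-1$ this range is empty. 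The same computation shows that $\tau_n := F_n(\bfone) = \sum_i z_{n,i} = 0$ for such $n$, so by exactness of \eqref{eq:surgery-triangle} each $G_{n+1}$ is injective, and the nested kernels $\ker H_1 \subseteq \ker H_2 \subseteq \cdots$ stabilize at $n_0 := 2g(K)-1$.

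I would establish the reverse containment $\ker H_n \subseteq V$ for every $n \geq 1$ by induction on $n$. The base case $n=1$ is exactness of \eqref{eq:surgery-triangle-0}: $\ker \tilde G_1 = \img(\tilde F_0) = \Q \cdot \tilde F_0(\bfone) = \Q \cdot \sum_i y_i \subseteq V$. For the inductive step, take $x \in \ker H_{n+1}$; exactness of \eqref{eq:surgery-triangle} at $I^\#(S^3_n(K))$ places $H_n(x)$ in $\img(F_n) = \Q \cdot \tau_n$. If $\tau_n = 0$, then $H_n(x) = 0$ and the inductive hypothesis gives $x \in V$ directly. Otherwise, it suffices to find $v \in V$ with $H_n(v) = H_n(x)$, for then $x - v \in \ker H_n \subseteq V$ and hence $x \in V$.

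The main obstacle is producing this $v$ in the inductive step when $\tau_n \neq 0$. In a free polynomial model $z_{n,j} \leftrightarrow u^j$, the calculation above places $H_n(V)$ inside the $\Q$-span of $\{u^i(1+u)(1-u)^{n-1}\}_i$, while $\tau_n$ formally equals $(u^g - u^{1-g+n})/(u-1)$, and these are incompatible in $\Q[u, u^{-1}]$ for $n < n_0$. The resolution must come from the genuine linear relations among the $z_{n,j}$ that hold in the finite-dimensional space $I^\#(S^3_n(K))$, which arise from exactness of the earlier surgery triangles in the iterated sequence \eqref{eq:surgery-triangle} together with the adjunction-forced vanishing of $z_{n,j}$ outside $[1-g(K)+n, g(K)-1]$. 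I would complete the argument via a dimension-tracking bookkeeping: the inclusion $\ker H_{n+1}/\ker H_n \hookrightarrow \Q \cdot \tau_n$ forces the kernels to grow by at most one dimension per step, and matching this growth step-by-step against the successive inclusions $y_i \in V$ that exactness produces at each level is what pins down the containment $\ker H_n \subseteq V$.
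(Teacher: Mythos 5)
Your structure is the right one---proving the forward inclusion via Lemma~\ref{lem:image-zni} plus adjunction, and attempting the reverse containment $\ker H_n \subseteq V$ (writing $H_n = G_n\circ\cdots\circ\tilde G_1$ as you do) by induction on $n$ using exactness of the surgery triangles. The forward inclusion, the stabilization of the nested kernels for $n\geq 2g(K)-1$, and the base case $\ker\tilde G_1 = \Q\cdot\sum_i y_i$ are all handled correctly. The paper itself gives no proof of this proposition---it simply cites \cite[Lemma~7.5]{bs-lspace}---so I am comparing your plan against the argument that reference must contain, whose shape is visible in the computations of Section~\ref{sec:parity}.

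The genuine gap is exactly where you flag it: in the inductive step, when $\tau_n := F_n(\bfone)\neq 0$ you must exhibit some $v\in V$ with $H_n(v)$ a nonzero multiple of $\tau_n$, and you do not. The ``dimension-tracking bookkeeping'' you propose cannot close this hole: the bound $\dim(\ker H_{n+1}/\ker H_n)\leq 1$ is equally compatible with $\ker H_{n+1}$ acquiring a new vector outside $V$, so counting dimensions alone never forces the containment. Moreover, your ``free polynomial model'' objection---that $H_n(V)$ lies in the span of $u^i(1+u)(1-u)^{n-1}$ and therefore cannot reach $\tau_n$---is actively misleading, because it ignores the adjunction-forced vanishing of $z_{m,j}$ for $j$ outside $[1-g(K)+m,\,g(K)-1]$; that vanishing is precisely what lets the telescoping sums close up and makes the preimage exist. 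The construction your argument needs is the content of Lemma~\ref{lem:X1-in-terms-of-z1} and Proposition~\ref{prop:X1-coefficients}: for each $n$ one builds an explicit $x_n = \sum_i c_{n,i}(-1)^i y_i\in V$ with binomial-coefficient weights and verifies, via a telescoping computation that uses $z_{m,1-h}=0$, that $H_n(x_n) = \pm\tfrac{1}{2^n}\tau_n$. (The hypothesis $\cinvt(K)>0$ stated in those results is not actually used in the computation.) With that element in hand your inductive step goes through verbatim; without it the proof is incomplete.
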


Proposition~\ref{prop:ker-G} is only stated explicitly for the elements $y_i$ in \cite{bs-lspace}, but the proof in either case is the same after deciding whether to use the triangle \eqref{eq:surgery-triangle-0} (for the $y_i$) or the $n=0$ case of \eqref{eq:surgery-triangle} (for the $y'_i$).

\begin{proposition} \label{prop:cinvt-count-y_i}
If $\cinvt(K)$ is positive, then it is equal to
\[ \#\{i \mid y_i \neq 0\} = \#\{i \mid y'_i \neq 0 \}, \]
where the $y_i$ and $y'_i$ are the elements defined in \eqref{eq:y_i} and \eqref{eq:tilde-y_i} respectively.
\end{proposition}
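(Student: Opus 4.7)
The plan is to show $\dim Y = \dim Y' = \cinvt(K)$, where $Y := \Span\{y_i\}$ and $Y' := \Span\{y'_i\}$. The nonzero $y_i$ (respectively $y'_i$) lie in the distinct generalized eigenspaces $I^\#(S^3_0(K),\mu;t_i)$ (respectively $I^\#(S^3_0(K);t_i)$), so they are linearly independent, and $\dim Y = \#\{i : y_i \neq 0\}$, $\dim Y' = \#\{i : y'_i \neq 0\}$. By Proposition~\ref{prop:N-mirror}, the hypothesis $\cinvt(K) > 0$ forces $N(\mirror{K}) = 0$, hence $\cinvt(K) = N(K)$; combining with the dimension formula of Theorem~\ref{thm:dim-surgery}, one sees that $F_n$ is nonzero for $0 \leq n < N(K)$ and vanishes for $n \geq N(K)$.

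I would then filter by $K_0 := 0$ and $K_n := \ker(G_n \circ \cdots \circ G_2 \circ \tilde G_1) \subseteq I^\#(S^3_0(K),\mu)$ for $n \geq 1$. By Proposition~\ref{prop:ker-G}, $K_n \subseteq Y$ for every $n$, with $K_n = Y$ once $n \geq 2g(K) - 1$; so $\dim Y = \dim K_n$ for $n$ large. For the upper bound, the quotient $K_{n+1}/K_n$ embeds via $H_n := G_n \circ \cdots \circ \tilde G_1$ into $\ker(G_{n+1}) = \img(F_n)$ for $n \geq 1$, while $K_1 = \ker(\tilde G_1) = \img(\tilde F_0)$; in each case the ambient space has dimension at most one because the source $I^\#(S^3)$ is one-dimensional. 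Therefore $\dim K_n \leq n$.

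The matching lower bound comes from rank-nullity for $H_n$:
\[
\dim K_n \;\geq\; \dim I^\#(S^3_0(K),\mu) - \dim I^\#(S^3_n(K)).
\]
Using $\cinvt(K) > 0$, Proposition~\ref{prop:N-mirror} gives $\dim I^\#(S^3_0(K),\mu) = \dim I^\#(S^3_0(K)) = r_0(K) + \cinvt(K)$, and Theorem~\ref{thm:dim-surgery} gives $\dim I^\#(S^3_n(K)) = r_0(K) + |n - \cinvt(K)|$ for $n \geq 1$; so the right-hand side equals $\cinvt(K) - |n - \cinvt(K)|$, which is $n$ for $0 \leq n \leq \cinvt(K)$. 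The two bounds pinch, so $\dim K_n = n$ for $n \leq N(K)$. For $n \geq N(K)$, $F_n = 0$ forces $\ker(G_{n+1}) = 0$ and hence $K_{n+1} = K_n$; the filtration stabilizes at $\dim K_n = N(K) = \cinvt(K)$. Thus $\dim Y = \cinvt(K)$, and the identical argument with the ``likewise'' part of Proposition~\ref{prop:ker-G} yields $\dim Y' = \cinvt(K)$.

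The main obstacle is finding a matching two-sided bound on $\dim K_n$: the upper bound ``at most one new dimension per step'' is formal from the surgery triangle and $\dim I^\#(S^3) = 1$, but the lower bound requires both Theorem~\ref{thm:dim-surgery} and the coincidence $\dim I^\#(S^3_0(K),\mu) = \dim I^\#(S^3_0(K))$ from Proposition~\ref{prop:N-mirror}, the latter of which is only guaranteed when $\cinvt(K) \neq 0$.
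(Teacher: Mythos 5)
Your proof is correct and takes essentially the same route as the paper: rank--nullity gives the lower bound $\dim K_n \geq n$ for $n \leq \cinvt(K)$, the surgery exact triangle gives the upper bound via ``at most one new kernel dimension per step,'' and Proposition~\ref{prop:ker-G} identifies the stable kernel with $\Span\{y_i\}$ (with linear independence of the nonzero $y_i$ coming from distinct eigenspaces). The paper argues at the single value $n = N(K)$ and then observes that later $G_j$ are injective, whereas you phrase it as a filtration $K_0 \subset K_1 \subset \cdots$ and pinch $\dim K_n = n$ for all $n \leq N(K)$; this is a cosmetic difference, not a distinct argument.
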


\begin{proof}
Since $\cinvt(K)$ is positive, Proposition~\ref{prop:N-mirror} says that $N(\mirror{K}) = 0$ and that
\[ \cinvt(K) = N(K) \geq 1. \]
Let $N = N(K) \geq 1$.  Theorem~\ref{thm:dim-surgery} and Proposition~\ref{prop:N-mirror} tell us that
\[ \dim I^\#(S^3_N(K)) = \dim I^\#(S^3_0(K),\mu) - N = \dim I^\#(S^3_0(K)) - N, \]
so that the kernel of
\begin{equation} \label{eq:compose-G}
G_N \circ G_{N-1} \circ \dots \circ G_2 \circ \tilde{G}_1: I^\#(S^3_0(K),\mu) \to I^\#(S^3_N(K))
\end{equation}
has dimension at least $N$.  On the other hand, we have
\begin{align*}
\dim \ker(\tilde{G}_1) &= \dim \img(\tilde{F}_0) \leq 1, \\
\dim \ker(G_i) &= \dim \img(F_{i-1}) \leq 1 \text{ for all }i \geq 2
\end{align*}
by the exactness of \eqref{eq:surgery-triangle} and \eqref{eq:surgery-triangle-0}, so the kernel of \eqref{eq:compose-G} has dimension at most $N$ as well, and hence this dimension must be exactly $N$.  Likewise, we have
\[ \dim \ker\big( G_N \circ G_{N-1} \circ \dots \circ G_2 \circ G_1: I^\#(S^3_0(K)) \to I^\#(S^3_N(K))\big) = N \]
by an identical argument.

Now all of the maps $F_j$ in \eqref{eq:def-F} are zero for $j \geq N$ by definition, and hence the maps
\[ G_{N+1}, G_{N+2}, G_{N+3}, \dots \]
are all injective.  It follows for all $j > 2g(K)-1 \geq N$ that
\[ \dim \ker(G_j \circ G_{j-1} \circ \dots \circ G_2 \circ \tilde{G}_1) = \dim \ker(G_N \circ G_{N-1} \circ \dots \circ G_2 \circ \tilde{G}_1) = N \]
and likewise
\[ \dim \ker(G_j \circ G_{j-1} \circ \dots \circ G_2 \circ G_1) = \dim \ker(G_N \circ G_{N-1} \circ \dots \circ G_2 \circ G_1) = N. \]
According to Proposition~\ref{prop:ker-G}, the kernels on the left are precisely the spans of the elements $y_i$ and $y'_i$ respectively, and the nonzero $y_i$ and $y'_i$ are linearly independent because they lie in different eigenspaces
\[ I^\#(S^3_0(K),\mu; t_i) \subset I^\#(S^3_0(K),\mu) \quad\text{and}\quad I^\#(S^3_0(K); t_i) \subset I^\#(S^3_0(K)). \]
We conclude that there must be exactly $N = N(K) = \cinvt(K)$ nonzero elements $y_i$ and $N = N(K) = \cinvt(K)$ nonzero elements $y'_i$.
\end{proof}

\begin{remark} \label{rmk:W-shaped}
Suppose that $\cinvt(K) = 0$ but $\tilde{F}_0 = I^\#(X_0,\tilde\nu_0)$ is injective.  Then $\tilde{G}_1$ is surjective with 1-dimensional kernel, by the exactness of \eqref{eq:surgery-triangle-0}; and $G_j$ is injective for all $j\geq 2$, because the definition of $\cinvt$ and Proposition~\ref{prop:N-mirror} imply that $N(K) \leq 1$ and hence $F_{j-1} = 0$ for all $j \geq 2$.  Moreover, some of the $y_i$ must be nonzero, since their sum is $\tilde{F}_0(\bfone) \neq 0$.  Repeating the argument of Proposition~\ref{prop:cinvt-count-y_i}, we must have
\[ \dim \ker (G_n \circ \dots \circ G_2 \circ \tilde{G}_1) = \dim \ker(\tilde{G}_1) = 1 \]
for all $n \geq 2$.  Proposition~\ref{prop:ker-G} then says that there is exactly one nonzero $y_i$.
\end{remark}

As a first application of our new description of $\cinvt(K)$, we refine the slice genus bound $\cinvt(K) \leq \max(2g_s(K)-1,0)$ proved in \cite{bs-concordance}.

\begin{proposition} \label{prop:max-y_j-bound}
Suppose that $\cinvt(K) \geq 1$, and define
\[ m = \max \{ j \mid y_j \neq 0 \}. \]
Then $\cinvt(K) \leq 2m+1 \leq 2g_s(K)-1$, where $g_s(K)$ is the smooth slice genus of $K$.
\end{proposition}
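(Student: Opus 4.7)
The plan is to combine the conjugation symmetry established in Lemma \ref{lem:conjugation-y_i} with an adjunction-type inequality applied to a surface in $X_0$ built from a minimal-genus slice surface for $K$.

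First I would invoke Lemma \ref{lem:conjugation-y_i}, which says that $y_j \neq 0$ if and only if $y_{-j} \neq 0$. In particular, if $m = \max\{j \mid y_j \neq 0\}$, then $y_{-m} \neq 0$ as well, so the set of indices $j$ with $y_j \neq 0$ is contained in $\{-m, -m+1, \ldots, m-1, m\}$, a set of cardinality $2m+1$. Since $\cinvt(K) \geq 1$, Proposition \ref{prop:cinvt-count-y_i} applies and yields
\[ \cinvt(K) = \#\{j \mid y_j \neq 0\} \leq 2m+1. \]

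Next, to bound $m$ in terms of $g_s(K)$, I would construct an embedded closed surface $\hat F \subset X_0$ of genus $g_s(K)$ in the homology class $[\hat\Sigma] \in H_2(X_0;\Z)$ and then apply an adjunction inequality to the basic class $s_{0,m}$. Such a surface is built by taking a properly embedded genus-$g_s(K)$ surface $F$ in a collar $[0,1]\times S^3 \subset X_0$ with $\partial F = K \subset \{1\}\times S^3$, and capping $F$ off by gluing it along its boundary to the core disk of the $0$-framed 2-handle. Since $H_2(X_0;\Z) \cong \Z$ is generated by a class of self-intersection $0$, one automatically has $[\hat F]^2 = 0$ and $[\hat F] = [\hat\Sigma]$.

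Because $y_m \neq 0$, the homomorphism $s_{0,m}$ appears as a basic class in the decomposition \eqref{eq:structure-theorem} for $(X_0,\tilde\nu_0)$. The adjunction inequality applied to $\hat F$ should then give
\[ |s_{0,m}([\hat F])| + [\hat F]\cdot[\hat F] \leq 2g(\hat F) - 2, \]
that is, $2m \leq 2g_s(K) - 2$, whence $2m+1 \leq 2g_s(K)-1$.

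The main obstacle is justifying the adjunction inequality in the precise form needed for the basic classes in this decomposition. This should follow from the structure theorem \cite[Theorem~5.1]{bs-lspace} together with the standard Kronheimer--Mrowka adjunction argument involving the ASD moduli space on a tubular neighborhood of an embedded surface of positive genus; note that $\cinvt(K) \geq 1$ already implies $g_s(K) \geq 1$ via the previously known inequality $\cinvt(K) \leq 2g_s(K)-1$, so the genus hypothesis for adjunction is automatically satisfied.
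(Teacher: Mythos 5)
Your first inequality $\cinvt(K) \leq 2m+1$ is proved correctly and is essentially the paper's argument (conjugation symmetry via Lemma~\ref{lem:conjugation-y_i} forcing the nonzero $y_j$ to lie in $\{-m,\dots,m\}$, then Proposition~\ref{prop:cinvt-count-y_i}).  For the second inequality, you take a genuinely different route.  The paper does not apply adjunction directly in $X_0$: instead it first uses Lemma~\ref{lem:image-zni}, the adjunction bound \eqref{eq:z-adjunction-bounds}, and the exactness of \eqref{eq:surgery-triangle-0} to show that $z_{1,m} = I^\#(X_1,\nu_1;s_{1,m})(\bfone)$ is nonzero (and also that this forces $m \geq 1$, which is why the paper treats $\cinvt(K)=1$ separately), and then applies adjunction in $X_1$ to a capped-off slicing surface of genus $g_s(K)$ and self-intersection $1$, with $s_{1,m}([F]) = 2m-1 > 0$.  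You instead apply adjunction in $X_0$ itself, where $y_m \neq 0$ already tells you $s_{0,m}$ is a basic class, so you skip the passage to $X_1$ entirely and also avoid the special case $m=0$.  Your surface construction is fine (remove a ball from $B^4$ disjoint from a minimal slice surface, then cap off with the $0$-framed 2-handle core; since slice surfaces in $B^4$ induce the Seifert framing on $K$, this does produce a closed genus-$g_s(K)$ surface of square zero generating $H_2(X_0)$).

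The one point you should nail down, and which I suspect is precisely why the authors took the longer route through $X_1$, is the adjunction inequality for a surface of \emph{square zero}.  All of the explicit 4-dimensional adjunction uses in this paper and in \cite[Eq.~(7.5)]{bs-lspace} are for surfaces $\Sigma_n \subset X_n$ with $\Sigma_n\cdot\Sigma_n = n \geq 1 > 0$, and for the $X_0$ bound on the $y_i$ they cite the separate 3-dimensional statement \cite[Corollary~7.6]{km-excision} (which only controls the eigenspaces via $g(K)$, not $g_s(K)$).  The square-zero case of the 4-dimensional adjunction inequality for the decomposition $I^\#(X,\nu;s)$ is a degenerate boundary case, and you should verify that the version proved in \cite{bs-lspace} (or the Kronheimer--Mrowka structure-theorem adjunction it rests on) is actually stated and proved for $\Sigma\cdot\Sigma = 0$ rather than only $\Sigma\cdot\Sigma > 0$.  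If it is, your argument is correct and shorter than the paper's; if not, you need the paper's detour to $X_1$ to replace a square-zero surface with a square-one surface before invoking adjunction.
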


\begin{proof}
Both $y_m$ and $y_{-m}$ are nonzero by Lemma~\ref{lem:conjugation-y_i}, and $g_s(K) \geq 1$ because if $K$ were slice then we would have had $\cinvt(K)=0$.  Thus if $\cinvt(K)=1$ then by Proposition~\ref{prop:cinvt-count-y_i} we must have $m=0$, and the desired inequalities become $1 \leq 1 \leq 2g_s(K)-1$, which is certainly true.  From now on we can assume that $\cinvt(K) > 1$.

Since $\cinvt(K) > 1$, we know from Proposition~\ref{prop:cinvt-count-y_i} that some $y_j$ other than $y_0$ is nonzero, so now Lemma~\ref{lem:conjugation-y_i} guarantees that $m \geq 1$.  It also implies that any given $y_j$ can only be nonzero if $-m \leq j \leq m$, so we invoke Proposition~\ref{prop:cinvt-count-y_i} again to show that $\cinvt(K) \leq 2m+1$.

We now repeat the argument of \cite[Proposition~7.7]{bs-lspace} with some minor modifications: both $y_m$ and $y_{-m}$ are nonzero, and the nonzero $y_i$ are linearly independent while their sum is the generator $I^\#(X_0,\tilde\nu_0)({\bf 1})$ of
\[ \ker\big(\tilde{G}_1: I^\#(S^3_0(K),\mu) \to I^\#(S^3_1(K))\big), \]
by the exactness of \eqref{eq:surgery-triangle-0}.  Thus neither $y_m$ nor $y_{-m}$ is in $\ker(\tilde{G}_1)$.  At the same time, since $y_j = 0$ for all $j > m$, Lemma~\ref{lem:image-zni} says that
\[ z_{1,j} + z_{1,j+1} = \pm 2\cdot \tilde{G}_1(y_j) = 0 \text{ for all } j > m, \]
and \eqref{eq:z-adjunction-bounds} says that $z_{1,j} = 0$ for all large enough $j$, so in fact we have $z_{1,j} = 0$ for all $j > m$.  But then
\[ z_{1,m} = z_{1,m} + z_{1,m+1} = \pm 2\cdot \tilde{G}_1(y_m) \neq 0, \]
so the map $I^\#(X_1,\nu_1;s_{1,m})$ which sends $\bfone \in I^\#(S^3)$ to $z_{1,m}$ is also nonzero.

Now $H_2(X_1)$ is generated by a surface $F$ of self-intersection 1 and genus $g_s(K) > 0$, built by capping off a minimal-genus slicing surface with the core of the $2$-handle, and by definition we have
\[ s_{1,m}([F]) = 2m-1 > 0. \]
Thus the adjunction inequality applied to $F$ and $I^\#(X_1,\nu_1;s_{1,m})$ says that
\[ (2m-1) + F\cdot F \leq 2g(F) - 2, \]
which is equivalent to the desired $2m+1 \leq 2g_s(K)-1$.
\end{proof}

\section{The parity of $\cinvt$} \label{sec:parity}

Our goal in this section is to prove the following theorem, stated in the introduction as Theorem \ref{thm:main-nu-odd}, using the characterization of $\cinvt(K)$ provided by Proposition~\ref{prop:cinvt-count-y_i}.

\begin{theorem} \label{thm:nu-odd}
For any knot $K \subset S^3$, the invariant $\cinvt(K) \in \Z$ is either zero or odd.
\end{theorem}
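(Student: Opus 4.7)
The plan is to combine Proposition~\ref{prop:cinvt-count-y_i} with the conjugation symmetry of Lemma~\ref{lem:conjugation-y_i}. Since $\cinvt(\mirror{K}) = -\cinvt(K)$, we may replace $K$ with $\mirror{K}$ if necessary to reduce to $\cinvt(K) \geq 0$; the case $\cinvt(K) = 0$ is immediate, so assume $\cinvt(K) > 0$. Proposition~\ref{prop:cinvt-count-y_i} then gives $\cinvt(K) = \#\{i \in \Z : y_i \neq 0\}$, and Lemma~\ref{lem:conjugation-y_i} shows that nonzero $y_i$ with $i \neq 0$ pair up under $i \leftrightarrow -i$. Hence the parity of $\cinvt(K)$ coincides with whether $y_0$ is nonzero, and the entire theorem reduces to proving that $y_0 \neq 0$.

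Suppose for contradiction that $y_0 = 0$. Lemma~\ref{lem:image-zni} at $i = 0$ then gives $z_{1,0} + z_{1,1} = 0$. At the same time, I would apply Theorem~\ref{thm:conjugation} to the cobordism $(X_1,\nu_1): S^3 \to S^3_1(K)$ with source $\bfone$, using the identity $-s_{1,i} = s_{1,1-i}$, to obtain $c_{d_1}(z_{1,i}) = z_{1,1-i}$, where $d_1 \in \Z/4\Z$ is the degree of $I^\#(X_1,\nu_1)$. Each summand $I^\#(X_1,\nu_1;s_{1,i})$ carries the same $\Z/4$-degree as the full cobordism map, so $z_{1,i}$ lies in $\Z/4$-grading $d_1$, which $c_{d_1}$ fixes by the formulas defining the operators $c_i$. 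Therefore $z_{1,i} = z_{1,1-i}$; specialising to $i=0$ gives $z_{1,0} = z_{1,1}$, which combined with $z_{1,0} + z_{1,1} = 0$ forces $z_{1,0} = z_{1,1} = 0$.

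The main obstacle is to convert this central vanishing into a genuine contradiction. The natural strategy is to iterate the same interplay at higher levels: for each $n \geq 1$, applying Theorem~\ref{thm:conjugation} to $(X_n,\nu_n)$ yields the analogous conjugation symmetry $z_{n,i} = z_{n,n-i}$, while the telescoping identity $G_{n+1}(z_{n,i}) = \tfrac{\epsilon_n}{2}(z_{n+1,i} - z_{n+1,i+1})$ of Lemma~\ref{lem:image-zni} propagates the equalities of the $z_{1,\cdot}$ upward to constraints on the $z_{n,i}$ near the symmetric center $i = n/2$. One would then combine these constraints with the definitional identity $F_{\cinvt(K)}(\bfone) = \sum_i z_{\cinvt(K),i} = 0$ coming from $N(K) = \cinvt(K)$ and the lower bound on the number of nonzero $y_j$ afforded by Proposition~\ref{prop:cinvt-count-y_i}, so as to rule out the scenario $y_0 = 0$. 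Executing this final step cleanly, and in particular identifying precisely which composition of cobordism maps must be nonzero yet is forced by the vanishings to be zero, is the principal technical challenge.
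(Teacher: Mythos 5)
Your reduction to showing $y_0 \neq 0$ when $\cinvt(K) > 0$ is correct and matches the paper's Proposition~\ref{prop:y0-nonzero}, and the relation $c_{d_1}(z_{1,i}) = z_{1,1-i}$ from Theorem~\ref{thm:conjugation} is correct. However, the next step is not: you assert that each summand $I^\#(X_1,\nu_1;s_{1,i})$ ``carries the same $\Z/4$-degree as the full cobordism map,'' so that $z_{1,i}$ is homogeneous of degree $d_1$ and hence fixed by $c_{d_1}$. This is unjustified. The decomposition $I^\#(X,\nu) = \sum_s I^\#(X,\nu;s)$ comes from the Donaldson series, whose terms $D_{X,\nu}(-\otimes h^n)$ involve moduli components of dimension $\equiv 0$ and $\equiv 2 \pmod 4$ as $n$ varies; a fixed $a_{\nu,s}$ is a linear combination of such contributions and is therefore homogeneous of degree $d_1$ only modulo $2$, not modulo $4$. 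The element $z_{1,i}$ may thus have a nonzero component in grading $d_1+2$, which $c_{d_1}$ negates rather than fixes, and the conclusion $z_{1,i} = z_{1,1-i}$ does not follow. This is exactly the point the paper is careful about in Lemma~\ref{lem:nu-zero-surgery-conjugate}: there, $z_{-1,i}$ is shown to be fixed by the conjugation operator only after first arguing, via exactness, that it lies in the $1$-dimensional image of the full cobordism map and is therefore a scalar multiple of the genuinely homogeneous element $I^\#(X_{-1},\nu_{-1})(\bfone)$. No such argument is available for the $z_{1,i}$ in your setting.

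Beyond this gap, you acknowledge that the final contradiction is not executed, so the proposal is incomplete even granting the claimed symmetry. The paper's actual proof of Proposition~\ref{prop:y0-nonzero} takes an entirely different route: it writes $I^\#(X_n,\nu_n)(\bfone)$ as the image under $G_n\circ\cdots\circ\tilde{G}_1$ of explicit $\Z$-linear combinations $x_n$ of the $y_i$ with binomial-coefficient weights (Lemma~\ref{lem:X1-in-terms-of-z1} and Proposition~\ref{prop:X1-coefficients}), shows that $y_0 = 0$ together with $\cinvt(K) > 0$ would force $x_0,\dots,x_{\cinvt(K)-1}$ to be linearly independent, packaged as the invertibility of a matrix $M_{i_1,\dots,i_k;h}$ (Proposition~\ref{prop:M-invertible}), and then contradicts this by exhibiting a nontrivial kernel vector via the observation that the relevant coefficient differences are values of odd polynomials of low degree (Lemma~\ref{lem:N-matrix-polynomials} and Proposition~\ref{prop:N-kernel}). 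In the paper's argument the conjugation symmetry enters only through Lemma~\ref{lem:conjugation-y_i}, applied to the $y_i$, not to the $z_{n,i}$.
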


Lemma~\ref{lem:conjugation-y_i}, which asserts that $y_i$ is nonzero if and only if $y_{-i}$ is nonzero, allows us to reduce Theorem~\ref{thm:nu-odd} to the following proposition.  (Here and throughout this section we continue to use the notation of Section~\ref{sec:zero-surgery-maps}.)

\begin{proposition} \label{prop:y0-nonzero}
Let $K \subset S^3$ be a knot with $\cinvt(K) > 0$, or with $\cinvt(K) = 0$ and $I^\#(X_0,\tilde\nu_0)$ injective.  Then $y_0 \neq 0$.
\end{proposition}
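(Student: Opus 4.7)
The plan is to prove the two cases of the hypothesis separately.

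For the first case $\cinvt(K) = 0$ with $\tilde{F}_0$ injective, Remark~\ref{rmk:W-shaped} states that exactly one of the $y_i$ is nonzero.  Lemma~\ref{lem:conjugation-y_i} forces $y_i \neq 0 \iff y_{-i} \neq 0$, so if the unique nonzero $y_i$ had $i \neq 0$, then $y_{-i}$ would provide a second nonzero element, contradicting uniqueness.  Hence the unique nonzero $y_i$ must be $y_0$.

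For the main case $\cinvt(K) > 0$, I will argue by contradiction: assume $y_0 = 0$.  Lemma~\ref{lem:conjugation-y_i} then pairs the nonzero $y_i$'s symmetrically about zero, so Proposition~\ref{prop:cinvt-count-y_i} gives $N := \cinvt(K) = 2k$ for some $k \geq 1$.  My plan is to trace the generator $\bfone \in I^\#(S^3)$ through the surgery cobordism maps $F_n = I^\#(X_n, \nu_n)$ for $n = 1, \ldots, N$ and contradict the defining property of $N = N(K)$, namely that $F_n \neq 0$ for $n < N$ while $F_N = 0$.  The key inputs are: the translation of $y_0 = 0$ via Lemma~\ref{lem:image-zni} into $z_{1,0} + z_{1,1} = 0$, and more generally $y_i = 0 \Rightarrow z_{1,i+1} = -z_{1,i}$; the adjunction vanishing $z_{n,i} = 0$ for $i$ outside $[1 - g(K) + n,\, g(K) - 1]$ from \eqref{eq:z-adjunction-bounds}; and the recursive relations $G_{n+1}(z_{n,i}) = \tfrac{\epsilon_n}{2}(z_{n+1,i} - z_{n+1,i+1})$ from Lemma~\ref{lem:image-zni}, which propagate information up the surgery tower.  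Starting from the "boundary" vanishing and working inward, the hypothesis $y_0 = 0$ imposes enough rigid linear relations on the $z_{n,i}$'s that some $F_n(\bfone) = \sum_i z_{n,i}$ must either vanish for $n < N$ or fail to vanish for $n = N$.

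The hard part will be identifying, in each configuration of nonzero $y_j$'s compatible with $\cinvt(K) = 2k$ and $y_0 = 0$, the specific step $n$ at which the contradiction surfaces.  I would first check the argument in small cases: for $\cinvt(K) = 2$, a short direct computation using only the $y_i = 0$ recursions and adjunction shows $F_1(\bfone) = 0$ regardless of $g(K)$ or the location of the symmetric pair $\{y_j, y_{-j}\}$, contradicting $F_1 \neq 0$; for $\cinvt(K) = 4$ the obstruction tends to emerge higher in the tower, after tracking linear-algebraic relations through $F_1, F_2, F_3$ to $F_N$.  The main technical effort is to extract from such case analyses a uniform inductive argument that handles every admissible configuration at once.
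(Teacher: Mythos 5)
Your handling of the first case ($\cinvt(K)=0$ with $\tilde{F}_0$ injective) is correct and matches the paper exactly: Remark~\ref{rmk:W-shaped} gives a unique nonzero $y_i$, and Lemma~\ref{lem:conjugation-y_i} forces $i=0$.

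For the main case ($\cinvt(K)>0$), however, what you have is a strategy sketch, not a proof. You have correctly assembled the right ingredients — Lemma~\ref{lem:image-zni}, the adjunction bound \eqref{eq:z-adjunction-bounds}, and the exactness of \eqref{eq:surgery-triangle} — and your $\cinvt(K)=2$ computation is sound: if $y_i=0$ for all $i\neq\pm i_1$, the recursion $z_{1,i+1}=-z_{1,i}$ together with the boundary vanishing of the $z_{1,i}$ forces $z_{1,j}$ to alternate on a window of even length $2i_1$, so $F_1(\bfone)=\sum_j z_{1,j}=0$, contradicting $N(K)=2$. But you then say, in effect, that for larger $k$ the contradiction "emerges higher in the tower" and that "the main technical effort is to extract from such case analyses a uniform inductive argument." That sentence is the entire difficulty of the theorem, and the proposal does not supply it. The bookkeeping of which $z_{n,i}$ survive at level $n$ depends intricately on the relative positions of $i_1<\cdots<i_k$ with respect to $g(K)$ and to each other, and a case analysis does not obviously close up into an induction.

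The paper avoids forward-tracing altogether. It instead constructs (Proposition~\ref{prop:X1-coefficients}) explicit linear combinations $x_n=\sum_i c_{n,i}(-1)^i y_i$, with binomial-sum coefficients $c_{n,i}$, satisfying $G_n\circ\cdots\circ\tilde{G}_1(x_n)=\pm 2^{-n}I^\#(X_n,\nu_n)(\bfone)$. Since the right-hand sides are nonzero for $n=0,\dots,2k-1$, the $x_n$ are linearly independent (Proposition~\ref{prop:M-invertible}), so the $2k\times 2k$ coefficient matrix $M_{i_1,\dots,i_k;h}$ is invertible. The contradiction then comes from a clean structural fact: Lemma~\ref{lem:N-matrix-polynomials} shows the antisymmetrized entries $d_{j,i}=c_{j,-i}-c_{j,i}$ are values of odd polynomials in $i$ of degree at most $2k-2$, so the derived matrix $N_{i_1,\dots,i_k;h}$ has rank at most $k-1$ (Proposition~\ref{prop:N-kernel}), giving a kernel vector for $M$ via \eqref{eq:N-kernel-implication}. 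This polynomial observation is what makes the argument uniform in $k$ and in the configuration of the $i_j$, and it is precisely the ingredient your plan is missing. Without it, the proposal is incomplete.
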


We explain here how Proposition~\ref{prop:y0-nonzero} implies Theorem~\ref{thm:nu-odd} and give a quick application of Theorem~\ref{thm:nu-odd} to the invariant $\chominvt$, and then we will devote the remainder of this section to proving Proposition~\ref{prop:y0-nonzero}.

\begin{proof}[Proof of Theorem~\ref{thm:nu-odd}]
Supposing that $\cinvt(K)$ is nonzero, we may assume without loss of generality that $\cinvt(K) > 0$, because otherwise we use the identity $\cinvt(\mirror{K}) = -\cinvt(K)$ and replace $K$ with its mirror.  Proposition~\ref{prop:cinvt-count-y_i} then says that $\cinvt(K)$ is equal to the number of nonzero elements $y_i$.  But by Lemma~\ref{lem:conjugation-y_i}, the nonzero $y_i$ come in pairs $\{y_i,y_{-i}\}$, with the possible exception of $y_0$.  Since Proposition~\ref{prop:y0-nonzero} guarantees that $y_0$ is indeed nonzero, we conclude that $\cinvt(K)$ is odd.
\end{proof}

The following easy application of Theorem~\ref{thm:nu-odd} will be useful later.  In order to state it, we recall from \cite{bs-concordance} and the introduction that $\cinvt$ can be turned into a smooth concordance homomorphism $\cC \to \R$ by taking its homogenization
\[ \chominvt(K) = \frac{1}{2} \lim_{n\to\infty} \frac{\cinvt(\#^nK)}{n}. \]

\begin{lemma} \label{lem:tau-nonzero}
If $\chominvt(K) > 0$, then $\cinvt(K) > 0$.  Similarly, if $\chominvt(K) < 0$, then $\cinvt(K) < 0$.
\end{lemma}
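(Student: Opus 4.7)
The plan is to argue by contrapositive for the positive statement, and then obtain the negative statement by applying the result to the mirror of $K$, using the identities $\cinvt(\mirror K) = -\cinvt(K)$ and $\chominvt(\mirror K) = -\chominvt(K)$.  So I will focus on showing that $\cinvt(K) \leq 0$ implies $\chominvt(K) \leq 0$.

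The key input from \cite{bs-concordance} is that $\cinvt$ is a quasi-morphism on the smooth concordance group, with defect at most $1$; that is, $|\cinvt(\#^n K) - n\cinvt(K)| \leq n-1$ for all $n \geq 1$.  Dividing by $n$ and passing to the limit gives the global bound
\[ |2\chominvt(K) - \cinvt(K)| \leq 1. \]
I would combine this bound with Theorem~\ref{thm:nu-odd}, which restricts the possible values of $\cinvt(K)$ to $\{0\} \cup (2\Z+1)$, and with the integrality of $\chominvt$ established by Ghosh--Li--Wong~\cite{glw}.

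With these in hand, suppose $\cinvt(K) \leq 0$.  By Theorem~\ref{thm:nu-odd} there are two cases.  If $\cinvt(K) \leq -1$, then the one-sided quasi-morphism bound gives $\cinvt(\#^n K) \leq n\cinvt(K) + (n-1) \leq -1$ for every $n \geq 1$, so $2\chominvt(K) = \lim_n \cinvt(\#^n K)/n \leq 0$ and hence $\chominvt(K) \leq 0$.  If instead $\cinvt(K) = 0$, then $|\cinvt(\#^n K)| \leq n-1$, so $|2\chominvt(K)| \leq 1$; since $\chominvt(K)$ is an integer, this forces $\chominvt(K) = 0$.  In either case $\chominvt(K) \leq 0$, completing the contrapositive.

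The main obstacle is verifying that the defect bound for $\cinvt$ is really $1$ rather than some larger constant, since the $\cinvt(K)=0$ case is exactly the borderline where integrality of $\chominvt$ cuts off the bound $|2\chominvt(K)| \leq 1$.  A defect bound of $2$, for instance, would allow the integer $\chominvt(K)$ to equal $1$ in that case and the argument would collapse.  Everything else is routine bookkeeping once the correct inequality from \cite{bs-concordance} is cited.
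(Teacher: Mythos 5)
Your proof is correct, but it takes a genuinely different route from the paper's, and it is worth noticing exactly where the two arguments diverge. The paper's proof uses Theorem~\ref{thm:nu-odd} (the parity of $\cinvt$) in an essential way and does \emph{not} invoke the Ghosh--Li--Wong integrality of $\chominvt$: assuming $\chominvt(K)>0$, it picks $m$ with $\cinvt(\#^{m+1}K) > \cinvt(\#^m K) \geq 1$, notes both are odd integers so the difference is at least $2$, and then compares this jump against the quasi-morphism bound $\cinvt(\#^{m+1}K) - \cinvt(\#^m K) \leq \cinvt(K)+1$ to force $\cinvt(K)\geq 1$. Your contrapositive argument, by contrast, actually makes no real use of Theorem~\ref{thm:nu-odd} despite invoking it: the case split $\cinvt(K) \leq -1$ vs.\ $\cinvt(K)=0$ is just the statement that $\cinvt$ is integer-valued, and in the borderline case the heavy lifting is done entirely by the external fact that $\chominvt(K)\in\Z$ (from \cite{glw}), which converts $|2\chominvt(K)|\leq 1$ into $\chominvt(K)=0$. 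Both proofs rest on the same defect-$1$ quasi-morphism inequality from \cite[Theorem~5.1]{bs-concordance}, and your worry that a defect of $2$ would break the argument is well-founded but moot since the defect really is $1$. The trade-off: your route replaces the parity input by the integrality of $\chominvt$, which is a deeper external result; the paper's route is more self-contained within the instanton framework and, in particular, is the only place in Section~\ref{sec:homology-cobordism} where Theorem~\ref{thm:nu-odd} is used at all (the paper flags this explicitly), so substituting \cite{glw} here would slightly alter the dependency structure. Either way the statement is proved.
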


\begin{proof}
Taking $\chominvt(K) > 0$ without loss of generality, by the definition of $\chominvt$ we must have
\[ \lim_{n\to\infty} \cinvt(\#^n K) = \infty. \]
Then there must be some integer $m$ such that
\[ \cinvt(\#^{m+1} K) > \cinvt(\#^m K) \geq 1, \]
and by Theorem~\ref{thm:main-nu-odd} both $\cinvt(\#^{m+1} K)$ and $\cinvt(\#^m K)$ are odd integers, so that
\begin{equation} \label{eq:cinvt-mp1-m}
\cinvt(\#^{m+1} K) - \cinvt(\#^m K) \geq 2.
\end{equation}
But $\cinvt(K)$ is a quasi-morphism \cite[Theorem~5.1]{bs-concordance} which satisfies the inequality
\begin{equation} \label{eq:quasi-morphism}
\left|\cinvt(L_1\#L_2) - \cinvt(L_1) - \cinvt(L_2) \right| \leq 1,
\end{equation}
so letting $L_1=\#^m K$ and $L_2=K$, we have
\begin{equation} \label{eq:cinvt-mp1-m-2}
\cinvt(\#^{m+1}K) - \cinvt(\#^mK) \leq \cinvt(K) + 1.
\end{equation}
We combine \eqref{eq:cinvt-mp1-m} and \eqref{eq:cinvt-mp1-m-2} to get $2 \leq \cinvt(K) + 1$, or $\cinvt(K) \geq 1$.
\end{proof}

\begin{remark}
Lemma~\ref{lem:tau-nonzero} also follows from the integrality of $\chominvt(K)$, proved by Ghosh, Li, and Wong \cite[Corollary~1.3]{glw}, together with the inequality $|2\chominvt(K)-\cinvt(K)| \leq 1$ of \cite[Proposition~5.4]{bs-concordance}.  Our proof requires fewer prerequisites, however, as it does not need to pass through any versions of instanton knot homology.
\end{remark}

\noindent With this application out of the way, we now turn to the proof of Proposition~\ref{prop:y0-nonzero}.

\subsection{Reduction to linear algebra} \label{ssec:reduction}

We wish to show that if $\cinvt(K)$ is positive, then the element
\[ y_0 \in I^\#(S^3_0(K),\mu;t_0) \]
must be nonzero.  The first step, carried out in this subsection, is to find a linear combination $x_n$ of the various $y_i$ whose image under $I^\#(W_n,\omega_n)\circ\dots\circ I^\#(W_1,\tilde{\omega}_1)$ is a nonzero multiple of the generator
\[ I^\#(X_n,\nu_n)(\bfone) \in I^\#(S^3_n(K)) \]
of $\ker I^\#(W_{n+1},\omega_{n+1})$.  This elaborates on a key step in our proof that instanton L-space knots are fibered \cite[Lemma~7.5]{bs-lspace}, where we showed that such linear combinations exist but did not find them explicitly.

Having found the various $x_n$, we will suppose that $y_0=0$ and package the $y_i$-coefficients of the elements $x_0,x_1,\dots,x_{\cinvt(K)-1}$ into a square matrix.  We will argue that this matrix must be invertible, because the $x_n$ will have to be linearly independent if their respective images in the various $I^\#(S^3_n(K))$ are to all be nonzero.  Then in Subsection~\ref{ssec:linalg} we will prove by a long, somewhat tedious computation that this matrix is not invertible after all, giving the desired contradiction.

In what follows, we will continue to write
\begin{align*}
\tilde{G}_1 &= I^\#(W_1,\tilde\omega_1), \\
G_n &= I^\#(W_n,\omega_n) \quad (n\geq 2)
\end{align*}
for simplicity, just as in \S\ref{sec:nu-characterization}.

\begin{lemma} \label{lem:X1-in-terms-of-z1}
Fix a knot $K \subset S^3$ with $\cinvt(K) > 0$ and an integer $h \geq g(K)-1$.  For $1 \leq n \leq 2h+1$, we define
\[ v_n = \sum_{i=-h}^h \binom{h-i}{n-1} z_{1,i} = \sum_{i=1-h}^{h-(n-1)} \binom{h-i}{n-1} z_{1,i} \]
as an element of $I^\#(S^3_1(K))$.  Then
\[ \sigma_n I^\#(X_n,\nu_n)(\bfone) = G_n\circ \dots \circ G_2 (v_n) \]
where $\sigma_n = \pm \frac{1}{2^{n-1}}$.
\end{lemma}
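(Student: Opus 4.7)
The plan is to exploit Lemma~\ref{lem:image-zni}, which says that $G_{m+1}(z_{m,i}) = \tfrac{\epsilon_m}{2}(z_{m+1,i}-z_{m+1,i+1})$ for $m\ge 1$. Consequently, if $w=\sum_i a_i z_{m,i}$ then
\[ G_{m+1}(w) \;=\; \frac{\epsilon_m}{2}\sum_i (a_i - a_{i-1})\,z_{m+1,i}, \]
so one application of $G_{m+1}$ acts on the coefficient sequence $(a_i)$ as (up to the scalar $\epsilon_m/2$) a first-order backward difference. I intend to iterate this $n-1$ times starting from $v_n$ and explicitly track the resulting coefficient sequence in $I^\#(S^3_n(K))$.

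The combinatorial backbone is Pascal's identity, in the form
\[ \binom{h-i}{k} - \binom{h-i+1}{k} \;=\; -\binom{h-i}{k-1}. \]
Starting from the initial coefficient $a_i^{(1)} := \binom{h-i}{n-1}$ (understood to be zero for $i<-h$ or $i>h$), a straightforward induction on $m$ using this identity will show that, in the ``interior'' range $m-1-h\le i\le h$ where no truncation effects arise, the coefficient of $z_{m,i}$ in $G_m\circ\cdots\circ G_2(v_n)$ equals
\[ a_i^{(m)} \;=\; (-1)^{m-1}\frac{\epsilon_1\cdots\epsilon_{m-1}}{2^{m-1}}\binom{h-i}{n-m}. \]
Setting $m=n$ collapses $\binom{h-i}{0}=1$ to give
\[ a_i^{(n)} \;=\; (-1)^{n-1}\frac{\epsilon_1\cdots\epsilon_{n-1}}{2^{n-1}} \;=:\; \sigma_n \;=\; \pm\frac{1}{2^{n-1}}, \]
a constant independent of $i$.

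The main technical point, and the only real obstacle, is to show that the ``bad'' indices $i<n-1-h$ --- where truncation effects spoil the clean formula and $a_i^{(n)}$ may take some spurious value --- do not contribute anything in $I^\#(S^3_n(K))$. This is precisely where the hypothesis $h\ge g(K)-1$ enters: for such $i$ we have $i\le n-2-h\le n-1-g(K) < n+1-g(K)$, strictly below the lower adjunction bound in \eqref{eq:z-adjunction-bounds}, so the basis element $z_{n,i}$ vanishes outright and these coefficients are irrelevant. By the same bound, every index $i>h$ satisfies $i>g(K)-1$, so $z_{n,i}=0$ there as well. Putting everything together, the sum $\sum_i a_i^{(n)}\,z_{n,i}$ collapses to $\sigma_n\sum_i z_{n,i} = \sigma_n\,I^\#(X_n,\nu_n)(\bfone)$, which is the desired identity for $n\ge 2$. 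The case $n=1$ is immediate, since the composition is then empty and $v_1 = \sum_{-h\le i\le h}z_{1,i}$ coincides with $I^\#(X_1,\nu_1)(\bfone) = \sum_i z_{1,i}$ once terms outside the adjunction range are dropped, giving $\sigma_1=1$.
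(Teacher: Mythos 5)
Your argument is correct, and it takes a genuinely different route from the paper's. The paper proves the matrix identity \eqref{eq:zni-recurrence} by induction (the inductive step uses the telescoping of $G_m$ applied to partial sums $\sum_{j<i}z_{m-1,j}$, which is why the matrix $M$ is strictly lower triangular with all-one entries), then identifies the entries of $M^{n-1}$ as $\binom{i-j-1}{n-2}$ by counting paths in a directed graph, and finally sums over $i$ using the hockey-stick identity \eqref{eq:sum-pt-diagonal}; in effect it first expresses each $\sigma_n z_{n,i}$ individually in terms of the $G_n\circ\cdots\circ G_2(z_{1,j})$, and then adds these up. You instead push the single vector $v_n$ forward through $G_2,\dots,G_n$, observe via Lemma~\ref{lem:image-zni} that each $G_{m+1}$ acts on coefficient sequences as $\tfrac{\epsilon_m}{2}$ times a backward difference, and close a one-line induction with Pascal's identity: the coefficient of $z_{m,i}$ is $(-1)^{m-1}\tfrac{\epsilon_1\cdots\epsilon_{m-1}}{2^{m-1}}\binom{h-i}{n-m}$ on the interior range, which collapses to $\sigma_n$ at $m=n$. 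Your handling of the boundary is exactly what is needed: the indices $i<n-1-h$ where the clean formula is spoiled by truncation, and the indices $i>h$, both have $z_{n,i}=0$ by the adjunction bound \eqref{eq:z-adjunction-bounds} together with $h\geq g(K)-1$, so they contribute nothing. The upshot is a shorter, more elementary argument that avoids both the matrix-power/path-counting computation and the hockey-stick summation. What the paper's longer route gives as a byproduct is the per-$i$ identity $\sigma_n z_{n,i-h} = G_n\circ\cdots\circ G_2\bigl(\sum_j\binom{i-j-1}{n-2}z_{1,j-h}\bigr)$, strictly more information than the summed statement, though this extra information is not used elsewhere in the paper.
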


\begin{proof}
We define $\sigma_1=1$ and $\sigma_n = \frac{(-1)^{n-1}\epsilon_1\epsilon_2\dots\epsilon_{n-1}}{2^{n-1}}$ for $n \geq 2$, so that $\sigma_n = -\frac{\epsilon_{n-1}}{2}\sigma_{n-1}$ for all $n \geq 2$.  We claim by induction that for all $n \geq 1$ we have
\begin{equation} \label{eq:zni-recurrence}
\sigma_n \begin{bmatrix} z_{n,1-h} \\ z_{n,2-h} \\ z_{n,3-h} \\ \vdots \\ z_{n,h} \end{bmatrix} =
\begin{bmatrix} 0 & 0 & 0 & \dots & 0 \\ 1 & 0 & 0 & \dots & 0 \\ 1 & 1 & 0 & \dots & 0 \\ \vdots & \vdots & \vdots & \ddots & \vdots \\ 1 & 1 & 1 & \dots & 0 \end{bmatrix}^{n-1}
\begin{bmatrix} G_n\circ G_{n-1} \circ \cdots \circ G_2(z_{1,1-h}) \\ G_n\circ G_{n-1} \circ \cdots \circ G_2(z_{1,2-h}) \\ G_n\circ G_{n-1} \circ \cdots \circ G_2(z_{1,3-h}) \\ \vdots \\ G_n\circ G_{n-1} \circ \cdots \circ G_2(z_{1,h}) \end{bmatrix}, \end{equation}
in which the $2h \times 2h$ matrix is zero on and above the main diagonal and has all other entries equal to $1$.  When $n=1$ there is nothing to prove.  When $n \geq 2$, by inductive hypothesis the right side is equal to
\begin{align*}
\sigma_{n-1} \begin{bmatrix} 0 & 0 & 0 & \dots & 0 \\ 1 & 0 & 0 & \dots & 0 \\ 1 & 1 & 0 & \dots & 0 \\ \vdots & \vdots & \vdots & \ddots & \vdots \\ 1 & 1 & 1 & \dots & 0 \end{bmatrix}
\begin{bmatrix} G_n(z_{n-1,1-h}) \\ G_n(z_{n-1,2-h}) \\ G_n(z_{n-1,3-h}) \\ \vdots \\ G_n(z_{n-1,h}) \end{bmatrix},
\end{align*}
and by Lemma~\ref{lem:image-zni}, the row on the right side corresponding to $\sigma_n z_{n,i-h}$ on the left is then
\begin{align*}
\sigma_{n-1} G_n\left(\sum_{j=1-h}^{i-1-h} z_{n-1,j}\right) &=
\sigma_{n-1}\cdot \frac{\epsilon_{n-1}}{2}\sum_{j=1-h}^{i-1-h} (z_{n,j} - z_{n,j+1}) \\
&= -\sigma_n (z_{n,1-h} - z_{n,i-h})
\end{align*}
since the sum on the right telescopes.  But $z_{n,1-h} = 0$ by \eqref{eq:z-adjunction-bounds}, since $n \geq 2$, so the right side is equal to $\sigma_nz_{n,i-h}$ as desired and this completes the induction.

Having proved \eqref{eq:zni-recurrence}, we note that the $2h\times 2h$ matrix $M$ on the right side is the adjacency matrix of a directed graph $\Gamma$ with vertices labeled $1,\dots,2h$ and a single edge $i \to j$ if and only if $i > j$.  Then the $(i,j)$ entry of $M^{n-1}$ counts paths in $\Gamma$ of the form
\[ i \to a_{n-2} \to a_{n-3} \to \dots \to a_1 \to j, \]
of total length $n-1$, and these are in bijection with $(n-2)$-element subsets of $\{j+1,j+2,\dots,i-1\}$, so there are $\binom{i-j-1}{n-2}$ of them.  (This number is zero whenever $i-j-1 < n-2$.)  Applying this to \eqref{eq:zni-recurrence}, we conclude for $1\leq i \leq 2h$ that
\[ \sigma_n z_{n,i-h} = G_n\circ\dots\circ G_2\left( \sum_{j=1}^{2h}\binom{i-j-1}{n-2} z_{1,j-h} \right). \]
And if we sum over $1 \leq i \leq 2h$ then we see that
\begin{align*}
\sigma_n I^\#(X_n,\nu_n)(\bfone) &= \sigma_n \sum_{i=1}^{2h} z_{n,i-h} \\
&= G_n \circ \dots \circ G_2 \left( \sum_{j=1}^{2h} \left(\sum_{i=1}^{2h} \binom{i-j-1}{n-2}\right) z_{1,j-h}\right) \\
&= G_n \circ \dots \circ G_2 \left( \sum_{j=1}^{2h} \left(\sum_{\ell=0}^{2h-j-1} \binom{\ell}{n-2}\right) z_{1,j-h}\right) \\
&= G_n \circ \dots \circ G_2 \left( \sum_{j=1}^{2h} \binom{2h-j}{n-1} z_{1,j-h} \right).
\end{align*}
Here we achieved the third equality by discarding the summands where $i \leq j$, since then $\binom{i-j-1}{n-2}=0$, and substituting $\ell = i-j-1$.  The last equality is an application of the identity
\begin{equation} \label{eq:sum-pt-diagonal}
\sum_{\ell=0}^m \binom{\ell}{k} = \binom{m+1}{k+1},
\end{equation}
which is easily proved by induction on $m$ via the identity $\binom{m+1}{k+1}+\binom{m+1}{k}=\binom{m+2}{k+1}$.  Noting that $2h-j = h - (j-h)$, and hence each $z_{1,i}$ on the right has coefficient $\binom{h-i}{n-1}$, completes the proof.
\end{proof}

\begin{proposition} \label{prop:X1-coefficients}
Fix a knot $K \subset S^3$ with $\cinvt(K) > 0$, and pick an integer $h \geq g(K)-1$.  For $1 \leq n \leq 2h+1$, we define elements
\[ x_n = \sum_{i=-h}^{h-n} c_{n,i} \cdot (-1)^i y_i \]
of $I^\#(S^3_0(K),\mu)$, where the coefficients $c_{n,i} \in \Z$ satisfy
\[ c_{1,i} = \begin{cases} 0, & i\equiv h\pmod{2} \\ 1, & i \not\equiv h \pmod{2}, \end{cases} \]
and for all $n\geq 2$ and $-h \leq i \leq h$,
\[ c_{n,i} = \sum_{k \geq 1 } \binom{(h-i)-2k}{n-2}. \]
(In this sum we interpret the range of summation to mean all $k\geq 1$ such that $(h-i)-2k \geq 0$, here and throughout the remainder of this section.)  Then we have
\[ (G_n \circ G_{n-1} \circ \dots \circ \tilde{G}_1)(x_n) = \pm\frac{1}{2^n} I^\#(X_n,\nu_n)(\bfone) \]
for all $n = 1,2,\dots,2h+1$.
\end{proposition}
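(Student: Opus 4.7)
The plan is to reduce Proposition~\ref{prop:X1-coefficients} to Lemma~\ref{lem:X1-in-terms-of-z1} by directly computing $\tilde{G}_1(x_n)$ and matching it with a scalar multiple of $v_n = \sum_{i=1-h}^{h-n+1} \binom{h-i}{n-1} z_{1,i}$. Namely, once I show that $\tilde{G}_1(x_n) = \frac{\epsilon_0}{2} v_n$, applying $G_n\circ\cdots\circ G_2$ to both sides and using Lemma~\ref{lem:X1-in-terms-of-z1} immediately yields
\[ (G_n\circ\cdots\circ G_2\circ\tilde{G}_1)(x_n) = \frac{\epsilon_0}{2}\,\sigma_n\, I^\#(X_n,\nu_n)(\bfone), \]
and since $\sigma_n=\pm\tfrac{1}{2^{n-1}}$ by Lemma~\ref{lem:X1-in-terms-of-z1}, the right-hand side is $\pm\tfrac{1}{2^n}\,I^\#(X_n,\nu_n)(\bfone)$.

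The computation of $\tilde{G}_1(x_n)$ uses only Lemma~\ref{lem:image-zni}: the signs $(-1)^i$ in $x_n$ are designed to cancel the $(-1)^i$ from $\tilde{G}_1(y_i) = \tfrac{\epsilon_0}{2}(-1)^i(z_{1,i}+z_{1,i+1})$, giving
\[ \tilde{G}_1(x_n) = \frac{\epsilon_0}{2}\sum_{i=-h}^{h-n} c_{n,i}\bigl(z_{1,i}+z_{1,i+1}\bigr). \]
Reindexing the second summand and collecting terms produces a boundary contribution $c_{n,-h}z_{1,-h}$, a leading contribution $c_{n,h-n}z_{1,h-n+1}$, and a bulk sum with coefficient $c_{n,i}+c_{n,i-1}$ on each $z_{1,i}$. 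The boundary contribution vanishes because $h\ge g(K)-1$ and the adjunction bound \eqref{eq:z-adjunction-bounds} gives $z_{1,-h}=0$, so after this step the claim $\tilde{G}_1(x_n) = \tfrac{\epsilon_0}{2}v_n$ reduces to two coefficient identities: $c_{n,h-n}=1$ and
\[ c_{n,i}+c_{n,i-1} \;=\; \binom{h-i}{n-1}\qquad(1-h \le i \le h-n). \]

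The main (and essentially only) obstacle is verifying these two identities, which is a clean combinatorial exercise. For $n=1$, exactly one of $c_{1,i},c_{1,i-1}$ equals $1$ by parity, so their sum is $1=\binom{h-i}{0}$, and $c_{1,h-1}=1$ since $h-1\not\equiv h\pmod 2$. For $n\ge 2$, the leading value $c_{n,h-n} = \sum_{k\ge 1}\binom{n-2k}{n-2}$ has a single nonzero summand $\binom{n-2}{n-2}=1$ at $k=1$. For the bulk identity, setting $m=h-i$, the indices $m-2k$ with $k\ge 1$ run through the nonnegative integers of parity $m\bmod 2$ up to $m-2$, while $(m+1)-2k$ runs through the remaining parity class up to $m-1$; their disjoint union is $\{0,1,\dots,m-1\}$, so
\[ c_{n,i}+c_{n,i-1}=\sum_{j=0}^{m-1}\binom{j}{n-2}=\binom{m}{n-1} \]
by the hockey-stick identity \eqref{eq:sum-pt-diagonal}. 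This was in fact the motivation for the unusual-looking formula defining $c_{n,i}$: one wants each partial sum of consecutive binomial coefficients $\binom{j}{n-2}$ to split as a sum of every other term plus the remaining every-other-term. Once these two identities are established, the earlier chain of implications closes the proof.
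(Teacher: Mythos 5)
Your proof is correct and follows essentially the same strategy as the paper, resting on Lemma~\ref{lem:X1-in-terms-of-z1} and Lemma~\ref{lem:image-zni}; you simply run the computation in the opposite direction, showing directly that $\tilde{G}_1(x_n) = \tfrac{\epsilon_0}{2}v_n$ rather than expanding $v_n$ as an image of partial sums of the $y_j$ and simplifying the resulting alternating sum. The reversed order makes the combinatorial step (verifying $c_{n,i}+c_{n,i-1}=\binom{h-i}{n-1}$ via parity-splitting and the hockey-stick identity \eqref{eq:sum-pt-diagonal}) a bit more transparent, but it is mathematically the same calculation.
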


\begin{proof}
We begin by expressing $z_{1,i}$ in terms of the various $y_i$.  By Lemma~\ref{lem:image-zni} we have
\begin{align*}
\tilde{G}_1\left(\sum_{i=-h}^{j-1} y_i\right)
&= \frac{\epsilon_0}{2} \sum_{i=-h}^{j-1} (-1)^i (z_{1,i} + z_{1,i+1}) \\
&= \frac{\epsilon_0}{2} \left((-1)^{j-1}z_{1,j} + (-1)^{-h}z_{1,-h} + \sum_{i=1-h}^{j-1} ((-1)^i + (-1)^{i-1})z_{1,i}\right) \\
&= (-1)^{j-1} \cdot \frac{\epsilon_0}{2} z_{1,j},
\end{align*}
since $z_{1,-h} = 0$ by \eqref{eq:z-adjunction-bounds}.  Combining this with Lemma~\ref{lem:X1-in-terms-of-z1}, we have
\begin{align*}
\frac{\epsilon_0\sigma_n}{2} I^\#(X_n,\nu_n)(\bfone)
&= G_n\circ\dots\circ G_2\left( \sum_{i=1-h}^{h-n+1} \binom{h-i}{n-1}\cdot\frac{\epsilon_0}{2}z_{1,i} \right) \\
&= G_n\circ\dots\circ \tilde{G}_1\left( \sum_{i=1-h}^{h-n+1} \binom{h-i}{n-1}\cdot (-1)^{i-1}\left(\sum_{j=-h}^{i-1}y_j \right)\right) \\
&= G_n\circ\dots\circ \tilde{G}_1\left( \sum_{j=-h}^{h-n} \left( \sum_{i=j+1}^{h-n+1} (-1)^{i-j-1}\binom{h-i}{n-1}
\right) (-1)^j y_j \right).
\end{align*}

We can simplify the coefficient of $(-1)^j y_j$ somewhat: it is equal to
\[ \binom{h-j-1}{n-1} - \binom{h-j-2}{n-1} + \binom{h-j-3}{n-1} - \dots - (-1)^{h-j-(n-1)}\binom{n-1}{n-1}. \]
If $n=1$ then this sum has the form $1-1+1-1+\dots$, which is zero if $h-j-(n-1)=h-j$ is even and one if it is odd.  If instead $n \geq 2$ then we apply the identity $\binom{\ell}{n-1} - \binom{\ell-1}{n-1} = \binom{\ell-1}{n-2}$ to pairs of consecutive terms, and the sum becomes
\[ \binom{h-j-2}{n-2} + \binom{h-j-4}{n-2} + \binom{h-j-6}{n-2} + \dots + \binom{n-1}{n-2} \]
if $h-j-(n-1)$ is even, while if $h-j-(n-1)$ is odd it becomes
\begin{multline*}
\left[\binom{h-j-2}{n-2} + \binom{h-j-4}{n-2} + \binom{h-j-6}{n-2} + \dots + \binom{n}{n-2}\right] + \binom{n-1}{n-1} \\
= \binom{h-j-2}{n-2} + \binom{h-j-4}{n-2} + \binom{h-j-6}{n-2} + \dots + \binom{n}{n-2} + \binom{n-2}{n-2}.
\end{multline*}
Thus in either case the coefficient of $(-1)^jy_j$ is
\[ \sum_{k \geq 1} \binom{h-j-2k}{n-2}, \]
completing the proof.
\end{proof}

We now recall from Proposition~\ref{prop:cinvt-count-y_i} that if $\cinvt(K) > 0$, then $\cinvt(K)$ is equal to the number of nonzero elements
\[ y_i = I^\#(X_0,\tilde{\nu}_0;s_{0,i})(\bfone) \in I^\#(S^3_0(K),\mu), \]
and that by Lemma~\ref{lem:conjugation-y_i}, a fixed element $y_i$ is nonzero if and only if $y_{-i}$ is nonzero.  Thus if $\cinvt(K)$ is positive and $y_0 = 0$, then we have $\cinvt(K) = 2k$ for some $k > 0$, and there are integers
\[ 0 < i_1 < i_2 < \dots < i_k \leq g(K)-1 \]
such that
\[ y_i \neq 0 \text{ if and only if } i = \pm i_j \text{ for some } j. \]

\begin{proposition} \label{prop:M-invertible}
Suppose that $\cinvt(K) > 0$ and $y_0 = 0$. Fix $h \geq g(K)-1$, and let
\[ 0 < i_1 < i_2 < \dots < i_k \leq h \]
be the sequence of integers $i$ such that $y_i = I^\#(X_0,\tilde\nu_0;s_{0,i})(\bfone)$ is nonzero if and only if $i=\pm i_j$ for some $j$.  Then the $2k \times 2k$ matrix
\[ M_{i_1,\dots,i_k;h} = \begin{bmatrix}
c_{0,-i_k} & \dots & c_{0,-i_2} & c_{0,-i_1} & c_{0,i_1} & c_{0,i_2} & \dots & c_{0,i_k} \\
c_{1,-i_k} & \dots & c_{1,-i_2} & c_{1,-i_1} & c_{1,i_1} & c_{1,i_2} & \dots & c_{1,i_k} \\
c_{2,-i_k} & \dots & c_{2,-i_2} & c_{2,-i_1} & c_{2,i_1} & c_{2,i_2} & \dots & c_{2,i_k} \\
c_{3,-i_k} & \dots & c_{3,-i_2} & c_{3,-i_1} & c_{3,i_1} & c_{3,i_2} & \dots & c_{3,i_k} \\
\vdots & \ddots & \vdots & \vdots & \vdots & \vdots & \ddots & \vdots \\
c_{2k-1,-i_k} & \dots & c_{2k-1,-i_2} & c_{2k-1,-i_1} & c_{2k-1,i_1} & c_{2k-1,i_2} & \dots & c_{2k-1,i_k} \end{bmatrix} \]
is invertible, where $c_{0,j} = (-1)^j$ and 
\begin{align*}
c_{1,j} &= \begin{cases} 0, & j\equiv h\pmod{2} \\ 1, & j\not\equiv h\pmod{2}, \end{cases} &
c_{n,j} &= \sum_{k \geq 1} \binom{(h-j)-2k}{n-2} \text{ for all } n\geq 2
\end{align*}
as in Proposition~\ref{prop:X1-coefficients}.
\end{proposition}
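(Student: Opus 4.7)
The plan is to prove invertibility of $M = M_{i_1,\ldots,i_k;h}$ by exhibiting its rows as the coordinate vectors of $2k$ linearly independent elements $x_0,x_1,\ldots,x_{2k-1}$ of the $2k$-dimensional subspace $V \subset I^\#(S^3_0(K),\mu)$ spanned by the nonzero $y_{\pm i_j}$. I would take $x_0 = I^\#(X_0,\tilde\nu_0)(\bfone)$, which by the definition of $y_i$ and the adjunction bound equals $\sum_{i=-h}^h y_i = \sum_i c_{0,i}(-1)^i y_i$, and for $1 \leq n \leq 2k-1$ I would take $x_n$ to be the element provided by Proposition~\ref{prop:X1-coefficients}. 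This is permissible because $i_k \leq h$ forces $k \leq h$ and hence $2k-1 \leq 2h+1$. Since $y_0 = 0$ by hypothesis and all other $y_i$ with $i \notin \{\pm i_1,\ldots,\pm i_k\}$ vanish, each $x_n$ lies in $V$, and in the basis $\{(-1)^{\pm i_j}y_{\pm i_j}\}_{j=1}^k$ the coordinates of $x_n$ are precisely row $n$ of $M$. So it will suffice to show $x_0,x_1,\ldots,x_{2k-1}$ are linearly independent.

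The heart of the argument is then a triangular action of the iterated cobordism maps $G_m \circ \cdots \circ G_2 \circ \tilde{G}_1$ on the $x_n$. On the one hand, Proposition~\ref{prop:X1-coefficients} gives, for $1 \leq n \leq 2k-1$,
\[ (G_n \circ \cdots \circ \tilde{G}_1)(x_n) = \pm\tfrac{1}{2^n}\, I^\#(X_n,\nu_n)(\bfone), \]
which is nonzero because $n < N(K) = \cinvt(K) = 2k$ forces $I^\#(X_n,\nu_n) \ne 0$ and $I^\#(S^3)$ is one-dimensional. On the other hand, exactness of the surgery triangles \eqref{eq:surgery-triangle-0} and \eqref{eq:surgery-triangle} yields $\tilde{G}_1 \circ I^\#(X_0,\tilde\nu_0) = 0$ and $G_{n+1} \circ I^\#(X_n,\nu_n) = 0$ for $n \geq 1$; combined with the previous identity, this forces $(G_m \circ \cdots \circ \tilde{G}_1)(x_n) = 0$ whenever $m > n \geq 0$. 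Given any relation $\sum_{n=0}^{2k-1} a_n x_n = 0$, successively applying $G_{2k-1} \circ \cdots \circ \tilde{G}_1$, then $G_{2k-2} \circ \cdots \circ \tilde{G}_1$, and so on down to $\tilde{G}_1$ extracts $a_n = 0$ for $n = 2k-1, 2k-2, \ldots, 1$. To finish I would invoke the eigenspace decomposition: the nonzero $y_i$ lie in distinct generalized eigenspaces $I^\#(S^3_0(K),\mu;t_i)$ of the $\mu(h)$ operators and are therefore linearly independent, so $x_0 = \sum_i y_i \ne 0$ and the remaining relation $a_0 x_0 = 0$ forces $a_0 = 0$.

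The main obstacle is really bookkeeping rather than any deep new input: one has to track the bundle decorations on each $S^3_n(K)$ appearing in the iterated triangles, confirm that Proposition~\ref{prop:X1-coefficients} is applicable throughout the range $1 \leq n \leq 2k-1$, and check that the convention used to define the $n=0$ row of $M$ (with $c_{0,j} = (-1)^j$) is compatible with $x_0 = \sum_i y_i$. Once these points are settled, the proof is formal: the triangular action of $G_m \circ \cdots \circ \tilde{G}_1$ peels off the coefficients $a_{2k-1},\ldots,a_1$ in sequence, and the eigenspace argument disposes of $a_0$, completing the linear-algebraic reduction.
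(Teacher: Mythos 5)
Your proposal is correct and follows essentially the same route as the paper: both constructions use $x_0 = I^\#(X_0,\tilde\nu_0)(\bfone)$ and the $x_n$ from Proposition~\ref{prop:X1-coefficients}, then exploit the fact that $(G_n\circ\cdots\circ\tilde{G}_1)(x_n)=\pm 2^{-n}I^\#(X_n,\nu_n)(\bfone)\neq0$ for $n<\cinvt(K)=2k$ together with $G_{n+1}\circ I^\#(X_n,\nu_n)=0$ to establish linear independence. The only cosmetic difference is that the paper derives a contradiction from an assumed relation $x_j=\sum_{i<j}d_ix_i$, while you peel off the coefficients $a_{2k-1},\ldots,a_1,a_0$ of a general relation in descending order; these are the same argument.
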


\begin{proof}
By the definition of $\cinvt(K) = 2k$, each of the maps
\[ I^\#(X_0,\tilde\nu_0) \quad\text{ and }\quad I^\#(X_n,\nu_n), \quad 1 \leq n \leq 2k-1 \]
is injective.  Following Proposition~\ref{prop:X1-coefficients}, we let
\begin{equation} \label{eq:xn-sum-definition}
x_n = \sum_{i=-h}^{h-n} c_{n,i} \cdot (-1)^iy_i \in I^\#(S^3_0(K),\mu)
\end{equation}
for $0 \leq n \leq 2k-1$, and it follows for $n \geq 1$ that the corresponding elements
\[ I^\#(X_n,\nu_n)(\bfone) = \begin{cases} 
x_0, & n=0 \\
\pm 2^n \cdot (G_n \circ G_{n-1} \circ \dots \circ \tilde{G}_1)(x_n), & 1 \leq n \leq 2k-1 \end{cases} \]
are all nonzero.  (For $n=0$ we interpret the left side as $I^\#(X_0,\tilde\nu_0)(\bfone)$.)

We claim that the elements $x_0,\dots,x_{2k-1}$ in \eqref{eq:xn-sum-definition} must all be linearly independent.  Assuming otherwise, there is some positive $j \leq 2k-1$ for which we can find a nontrivial relation of the form
\[ x_j = \sum_{i=0}^{j-1} d_i x_i. \]
But then we must have
\begin{align*}
I^\#(X_j,\nu_j)(\bfone) &= \pm 2^j \cdot (G_j \circ G_{j-1} \circ \dots \circ \tilde{G}_1) \left(\sum_{i=0}^{j-1} d_i x_i\right) \\
&= \pm \sum_{i=0}^{j-1} 2^{j-i}d_i\cdot (G_j \circ \dots \circ G_{i+1})\left(2^i\cdot G_i\circ\dots\circ \tilde{G}_1(x_i)\right) \\
&= \pm \sum_{i=0}^{j-1} 2^{j-i}d_i\cdot (G_j \circ \dots \circ G_{i+1})\left(\pm I^\#(X_i,\nu_i)(\bfone)\right).
\end{align*}
(In the above equations, when $i=0$ we should again interpret $(X_i,\nu_i)$ as $(X_0,\tilde\nu_0)$, and also $G_{i+1}$ as $\tilde{G}_1$.)  We know for each $i$ that $G_{i+1} \circ I^\#(X_i,\nu_i) = 0$ by the surgery exact triangle, so we have $I^\#(X_j,\nu_j)(\bfone) = 0$, contradiction.

Now by assumption we have $y_j = 0$ for $j\not\in\{\pm i_1,\dots,\pm i_k\}$, so from \eqref{eq:xn-sum-definition} we see that 
\[ \begin{bmatrix} x_0 \\ x_1 \\ \vdots \\ x_{2k-1} \end{bmatrix} = M_{i_1,\dots,i_k;h}
\begin{bmatrix} (-1)^{-i_k}y_{-i_k} \\ \vdots \\ (-1)^{-i_1}y_{-i_1} \\ (-1)^{i_1}y_{i_1} \\ \vdots \\ (-1)^{i_k} y_{i_k} \end{bmatrix}. \]
The linear independence of $x_0,\dots,x_{2k-1}$ then implies that $M_{i_1,\dots,i_k;h}$ is invertible, exactly as claimed.
\end{proof}

\subsection{Some linear algebra} \label{ssec:linalg}

In Proposition~\ref{prop:M-invertible} we associated to every $k \geq 1$ and sequence of integers
\[ 0 < i_1 < i_2 < \dots < i_k \leq h \]
a $2k \times 2k$ matrix
\[ M_{i_1,\dots,i_k;h} = \begin{bmatrix}
c_{0,-i_k} & \dots & c_{0,-i_1} & c_{0,i_1} & \dots & c_{0,i_k} \\
c_{1,-i_k} & \dots & c_{0,-i_1} & c_{0,i_1} & \dots & c_{0,i_k} \\
\vdots & \ddots & \vdots & \vdots & \ddots & \vdots \\
c_{2k-1,-i_k} & \dots & c_{2k-1,-i_1} & c_{2k-1,i_1} & \dots & c_{2k-1,i_k} \\
\end{bmatrix}. \]
In this subsection we will prove by induction on $k$ that $M_{i_1,\dots,i_k;h}$ is never invertible.  To do so, we will define a related $2k \times k$ integer matrix
\begin{align} \label{eq:N-matrix-def}
N_{i_1,\dots,i_k;h} &= \begin{bmatrix} d_{0,i_1} & \dots & d_{0,i_k} \\ d_{1,i_1} & \dots & d_{1,i_k} \\ \vdots & \ddots & \vdots \\ d_{2k-1,i_1} & \dots & d_{2k-1,i_k} \end{bmatrix}, &
d_{j,i} &= c_{j,-i} - c_{j,i}
\end{align}
motivated by the idea that this should contain the same information as the matrix $M_{i_1,\dots,i_k;h}$ after we account for the conjugation symmetries of Section~\ref{sec:conjugation}.  Indeed, we observe that
\begin{equation} \label{eq:N-kernel-implication}
N_{i_1,\dots,i_k;h} \begin{bmatrix} x_1 \\ \vdots \\ x_k \end{bmatrix} = 0
\quad\Longrightarrow\quad
M_{i_1,\dots,i_k;h} \begin{bmatrix} x_k \\ \vdots \\ x_1 \\ -x_1 \\ \vdots \\ -x_k \end{bmatrix} = 0,
\end{equation}
so it will suffice to find a nonzero vector $\begin{bmatrix} x_1 & \cdots & x_k \end{bmatrix}^T$ in the kernel of $N_{i_1,\dots,i_k;h}$.  This will ultimately come from the fact that for fixed $j$, the various $d_{j,i}$ are linear combinations of binomial coefficients of bounded degree, which turn out to be polynomials in $i$; by construction these polynomials are also odd, so then each row of $N_{i_1,\dots,i_k;h}$ is a linear combination of relatively few row vectors of the form $\begin{bmatrix} i_1^e & \cdots & i_k^e \end{bmatrix}$, with $e$ an odd integer, and this leads us to an upper bound on the rank of $N_{i_1,\dots,i_k;h}$ and hence to an element of its kernel.

Combining the definition of $c_{j,i}$ from Proposition~\ref{prop:M-invertible} with \eqref{eq:N-matrix-def}, we see that if $-h \leq i \leq h$ then we have
\begin{align*}
d_{0,i} &= (-1)^{-i} - (-1)^{i} = 0 \\
d_{1,i} &= c_{1,-i} - c_{1,i} = 0
\end{align*}
since $\pm i \equiv h\pmod{2}$ if and only if $i \equiv h \pmod{2}$.  It follows immediately that when $k=1$ we have
\[ N_{i_1;h} = \begin{bmatrix} d_{0,i_1} \\ d_{1,i_1} \end{bmatrix} = \begin{bmatrix} 0 \\ 0 \end{bmatrix}, \]
and so $N_{i_1;h} \begin{bmatrix} 1 \end{bmatrix} = 0$.  For larger values of $j$, we begin with the following observation about the entries of $N_{i_1,\dots,i_k;h}$.

\begin{lemma} \label{lem:N-matrix-polynomials}
For all $0 \leq j \leq 2h+1$, there is an odd polynomial $p_j(t) \in \Q[t]$ of degree at most $\max(j-1,0)$, depending implicitly on $h$, such that
\[ p_j(i) = d_{j,i} \]
for all integers $i$ such that $1 \leq i \leq h$.  When $j$ is even and at least $2$, the degree of $p_j(t)$ is exactly $j-1$.
\end{lemma}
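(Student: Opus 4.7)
The plan is to rewrite $c_{j,i}$ as a function of $m = h - i$, construct polynomial extensions in $m$ for each parity class, and check that the two resulting odd-symmetric differences coincide, giving a single odd polynomial $p_j(t)$. The calculations just before the lemma already handle $j = 0, 1$ (take $p_0 = p_1 = 0$), so from now on assume $j \geq 2$. Setting
\[ P_j(m) = \sum_{k = 1}^{\lfloor m/2 \rfloor} \binom{m-2k}{j-2}, \]
we have $c_{j,i} = P_j(h-i)$ with $P_j(0) = P_j(1) = 0$, and peeling off the $k = 1$ term gives the recurrence $P_j(m) - P_j(m-2) = \binom{m-2}{j-2}$ for $m \geq 2$.

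Next I would observe that $\sum_{k=0}^{n-1}\binom{2k}{j-2}$ is a polynomial in $n$ of degree $j-1$ (a Faulhaber-type sum of a polynomial of degree $j-2$ in $k$), so there is a unique polynomial $P_j^+(m) \in \Q[m]$ of degree $j-1$ with $P_j^+(0) = 0$ that agrees with $P_j$ on all even $m \geq 0$; analogously there is $P_j^-(m) \in \Q[m]$ of degree $j-1$ with $P_j^-(1) = 0$ agreeing with $P_j$ on odd $m \geq 1$. Both satisfy the polynomial identity $P_j^\pm(m) - P_j^\pm(m-2) = \binom{m-2}{j-2}$, so their difference $f(m) := P_j^+(m) - P_j^-(m)$ satisfies $f(m) = f(m-2)$ identically. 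A polynomial of positive degree $d$ with leading coefficient $a$ would give leading term $2da \cdot m^{d-1} \neq 0$ in $f(m) - f(m-2)$, so $f$ must be a constant $C_j$.

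Now I define $p_j(t) = P_j^+(h+t) - P_j^+(h-t) \in \Q[t]$, which is manifestly odd in $t$ with degree at most $j-1$. The constant difference $C_j$ cancels out, so one also has $p_j(t) = P_j^-(h+t) - P_j^-(h-t)$. For any integer $1 \leq i \leq h$, the integers $h \pm i$ are nonnegative and share the parity of $h$, so the appropriate one of the two parity-indexed expressions for $P_j$ applies at both points, giving $p_j(i) = P_j(h+i) - P_j(h-i) = c_{j,-i} - c_{j,i} = d_{j,i}$.

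Finally, for the degree when $j$ is even and $\geq 2$: comparing leading terms in $P_j^+(m) - P_j^+(m-2) = \binom{m-2}{j-2}$ forces the leading coefficient of $P_j^+(m)$ to be $\frac{1}{2(j-1)!}$, so the coefficient of $t^{j-1}$ in $p_j(t)$ equals $\frac{1 - (-1)^{j-1}}{2(j-1)!}$, which vanishes for $j$ odd and equals $\frac{1}{(j-1)!} \neq 0$ for $j$ even. The main subtlety of the argument is recognizing that $P_j^+ - P_j^-$ is constant; once that is in hand, a single odd polynomial $p_j$ captures $d_{j,i}$ across both parity regimes.
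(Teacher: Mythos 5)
Your argument is correct and takes a genuinely different route from the paper's proof. The paper proceeds by induction on $j$: after computing $d_{2,i} = i$ directly, it establishes the discrete recurrence $d_{j,i+1} - d_{j,i} = d_{j-1,i} + \binom{h-i-1}{j-2}$ (separately for $i \geq 0$ and $i < 0$), defines $p_j$ as the unique polynomial solution of the resulting difference equation with $p_j(0) = 0$, reads off the degree bound from that recurrence, and then proves oddness a posteriori by observing that $p_j(t) + p_j(-t)$ has degree at most $j-1$ but at least $2h+1 \geq j$ zeros. Your approach instead builds $p_j$ directly in the manifestly odd form $p_j(t) = P_j^+(h+t) - P_j^+(h-t)$, using interpolating polynomials $P_j^\pm$ for the two parity classes of $P_j(m) = \sum_k \binom{m-2k}{j-2}$, with the one nontrivial step being the observation that $P_j^+ - P_j^-$ is constant (because both satisfy the same degree-lowering difference equation). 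That constancy is genuinely needed to cover the case where $h+i$ and $h-i$ are odd, and you use it correctly. Your construction makes both the oddness and the degree computation essentially immediate by design, whereas the paper has to work for them separately; the trade-off is that the paper's inductive recurrence is also used implicitly elsewhere in spirit (e.g.\ it mirrors how the $z_{n,i}$ propagate), so the two proofs emphasize different structure.

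One small wording slip: you say ``the integers $h \pm i$ are nonnegative and share the parity of $h$,'' but in fact $h+i$ and $h-i$ share the parity of $h+i$, which equals the parity of $h$ only when $i$ is even. Fortunately the logic only requires that $h+i$ and $h-i$ have the \emph{same} parity as each other (so that a single one of $P_j^+$, $P_j^-$ applies at both points), and that is true since they differ by $2i$; the constancy of $P_j^+ - P_j^-$ then guarantees $p_j(i) = P_j(h+i) - P_j(h-i)$ in either parity regime. So the argument goes through as written; just fix the phrasing.
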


\begin{proof}
For $j=0,1$ we have just seen that we can take $p_j(t) = 0$, so we assume from now on that $j \geq 2$.
We extend $d_{j,i}$ to the range $-h \leq i \leq 0$ by setting $d_{j,i} = c_{j,-i} - c_{j,i}$ for all such $i$, so that $d_{j,-i} = -d_{j,i}$ for $-h \leq i \leq h$.  By definition, when $1 \leq i \leq h$ we have
\begin{align*}
d_{j,i} &= c_{j,-i} - c_{j,i} \\
&= \left(\sum_{k\geq 1} \binom{h+i-2k}{j-2}\right) - \left(\sum_{k\geq 1} \binom{h-i-2k}{j-2}\right) \\
&= \binom{h+i-2}{j-2} + \binom{h+i-4}{j-2} + \binom{h+i-6}{j-2} + \dots + \binom{h-i}{j-2}.
\end{align*}
If $j=2$ then this means that $d_{2,i} = i$ for $1 \leq i \leq h$ and hence for $-h \leq i \leq h$ as well, so we can take $p_2(t) = t$.

Now suppose for some $j \geq 3$ that we have proved the lemma for all smaller values of $j$.  We will prove the lemma for $j$ as well by first showing that
\begin{equation} \label{eq:dji-differences}
d_{j,i+1}-d_{j,i} = d_{j-1,i} + \binom{h-i-1}{j-2}
\end{equation}
for all integers $i$ in the range $-h \leq i < h$.  Indeed, when $0 \leq i < h$, we compute that
\begin{align*}
d_{j,i+1}-d_{j,i} &= \left[ \binom{h+i-1}{j-2} - \binom{h+i-2}{j-2} \right] + \left[ \binom{h+i-3}{j-2} - \binom{h+i-4}{j-2} \right] \\
&\qquad + \dots + \left[ \binom{h-i+1}{j-2} - \binom{h-i}{j-2} \right] + \binom{h-i-1}{j-2} \\
&= \left[ \binom{h+i-2}{j-3} + \binom{h+i-4}{j-3} + \dots + \binom{h-i}{j-3} \right] + \binom{h-i-1}{j-2} \\
&= d_{j-1,i} + \binom{h-i-1}{j-2},
\end{align*}
as desired.

Similarly, when $-h \leq i < 0$ we use the $|i|-1$ case of this computation to show that
\begin{equation} \label{eq:d-difference-i-negative}
d_{j,i+1}-d_{j,i} = d_{j,|i|} - d_{j,|i|-1} = d_{j-1,|i|-1} + \binom{h-|i|}{j-2}.
\end{equation}
The right side does not quite have the same form as before, but we can check that
\begin{align*}
d_{j-1,|i|-1} + d_{j-1,|i|} &= \left[\binom{h+|i|-3}{j-3} + \binom{h+|i|-5}{j-3} + \dots + \binom{h-|i|+1}{j-3}\right] \\
&\qquad + \left[\binom{h+|i|-2}{j-3} + \binom{h+|i|-4}{j-3} + \dots + \binom{h-|i|}{j-3}\right] \\
&= \sum_{\ell=0}^{h+|i|-2} \binom{\ell}{j-3} - \sum_{\ell=0}^{h-|i|-1} \binom{\ell}{j-3} \\
&= \binom{h+|i|-1}{j-2} - \binom{h-|i|}{j-2},
\end{align*}
where we have again used the identity \eqref{eq:sum-pt-diagonal} in the last line.  This allows us to rewrite the right side of \eqref{eq:d-difference-i-negative} for $-h \leq i < 0$ as
\[ d_{j,i+1} - d_{j,i} = \binom{h+|i|-1}{j-2} - d_{j-1,|i|}, \]
and now since $|i|=-i$ and $d_{j-1,|i|} = -d_{j-1,i}$, this establishes \eqref{eq:dji-differences} for $-h \leq i < 0$ as well.

Now by hypothesis, for $-h \leq i \leq h$ we have
\[ d_{j-1,i} = p_{j-1}(i), \]
where $p_{j-1}$ is an odd polynomial of degree at most $j-2$, and if $j$ is even then $\deg p_{j-1} \leq j-3$ because odd polynomials must have odd degree.  Moreover,
\[ \binom{h-i-1}{j-2} = \frac{ ((h-1)-i)((h-2)-i) \cdots ((h-(j-2))-i) }{ (j-2)! } \]
is a polynomial in $i$ (depending implicitly on our choice of $h$) of degree exactly $j-2$.  Thus we can define the polynomial $p_j(t)$ to be the unique one satisfying $p_j(0) = 0$ and 
\begin{equation} \label{eq:d-difference-equation}
p_j(t+1) - p_j(t) = p_{j-1}(t) +  \frac{1}{(j-2)!}\prod_{\ell=1}^{j-2} ((h-\ell)-t), \quad t\in\Z,
\end{equation}
and by \eqref{eq:dji-differences} we will have $p_j(i) = d_{j,i}$ for $-h \leq i \leq h$.  The successive differences $p_j(t+1)-p_j(t)$ are polynomials of degree at most $j-2$, with equality when $j$ is even since the $\binom{h-i-1}{j-2}$ term on the right side of \eqref{eq:d-difference-equation} is the unique term on that side with degree exactly $j-2$.  We conclude that $\deg p_j(t) \leq j-1$, and that equality holds when $j$ is even.

It remains to be seen that $p_j(t)$ is also odd.  To do so, we note that the polynomial
\[ p_j(t) + p_j(-t) \]
has degree at most $j-1$, and that
\[ p_j(i) + p_j(-i) = d_{j,i} + d_{j,-i} = 0 \quad \text{for } i=-h,-h+1,-h+2,\dots,h. \]
Since this polynomial has degree at most $j-1$ and $2h+1 \geq j$ distinct zeroes, it must be identically zero, and therefore $p_j(-t) = p_j(t)$ as desired.  This completes the proof by induction.
\end{proof}

\begin{proposition} \label{prop:N-kernel}
For any sequence of integers $0 < i_1 < i_2 < \dots < i_k \leq h$, there is a nonzero vector $v \in \Z^k$ such that
\[ (N_{i_1,\dots,i_k;h})v = 0. \]
\end{proposition}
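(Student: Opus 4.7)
The plan is to show that the matrix $N = N_{i_1,\ldots,i_k;h}$, viewed as a linear map $\Q^k \to \Q^{2k}$, has rank strictly less than $k$; its $\Q$-kernel is then nontrivial, and clearing denominators in any nonzero rational kernel vector produces the desired $v \in \Z^k$.

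By Lemma~\ref{lem:N-matrix-polynomials}, for each $0 \leq j \leq 2k-1$ the $j$-th row of $N$ equals $(p_j(i_1),\ldots,p_j(i_k))$ for some odd polynomial $p_j(t) \in \Q[t]$ of degree at most $\max(j-1,0)$. The rows indexed $j=0,1$ vanish since $p_0 = p_1 = 0$. For $2 \leq j \leq 2k-1$, oddness forces $\deg p_j$ to be odd, while the bound $\deg p_j \leq j-1 \leq 2k-2$ together with parity yields $\deg p_j \leq 2k-3$. Hence every row of $N$ lies in the image of the evaluation map
\[ \mathrm{ev}: V \to \Q^k, \qquad p \mapsto \bigl(p(i_1), p(i_2), \ldots, p(i_k)\bigr), \]
where $V = \Span_\Q(t, t^3, t^5, \ldots, t^{2k-3}) \subset \Q[t]$ is the space of odd polynomials of degree at most $2k-3$.

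Since $\dim_\Q V = k-1$, the row space of $N$ has dimension at most $k-1$, so $\rank(N) \leq k-1 < k$. This forces $\dim_\Q \ker(N) \geq 1$, which proves the proposition after scaling a nonzero rational kernel vector to be integral. The substantive content—that each entry $d_{j,i}$ of $N$ interpolates an odd polynomial in $i$ of the claimed degree—is already carried out in Lemma~\ref{lem:N-matrix-polynomials}; what remains is the dimension count above, where the key saving comes from oddness, which halves the dimension of the ambient evaluation space and lets $2k-2$ nonzero rows fit into a $(k-1)$-dimensional subspace of $\Q^k$.
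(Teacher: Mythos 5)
Your proof is correct and follows essentially the same argument as the paper: both use Lemma~\ref{lem:N-matrix-polynomials} to express the rows as evaluations of odd polynomials in $\Span_\Q(t,t^3,\ldots,t^{2k-3})$, conclude $\rank N \leq k-1$, and extract an integer kernel vector. Your write-up just makes two small points the paper leaves implicit (that rows $j=0,1$ vanish, and that oddness bumps the degree bound from $2k-2$ down to $2k-3$), which is a nice touch.
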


\begin{proof}
By Lemma~\ref{lem:N-matrix-polynomials}, the rows of $N_{i_1,\dots,i_k;h}$ have the form
\[ \begin{bmatrix} d_{j,i_1} & d_{j,i_2} & \dots & d_{j,i_k} \end{bmatrix} = \begin{bmatrix} p_j(i_1) & p_j(i_2) & \dots & p_j(i_k) \end{bmatrix} \]
for $0 \leq j \leq 2k-1$, where each $p_j(t) \in \Q[t]$ is an odd polynomial of degree at most $\max(j-1,0) \leq 2k-2$.  Since $p_j(t)$ is a $\Q$-linear combination of the $k-1$ monomials $t, t^3, t^5, \dots, t^{2k-3}$, it follows that each row of $N_{i_1,\dots,i_k;h}$ is a $\Q$-linear combination of the $k-1$ row vectors
\[ \begin{bmatrix} i_1^e & i_2^e & \dots & i_k^e \end{bmatrix}, \quad e=1,3,5,\dots,2k-3. \]
This means that the row space of $N_{i_1,\dots,i_k;h}$ has dimension at most $k-1$, so
\[ \rank N_{i_1,\dots,i_k;h} \leq k-1 \]
and since $N_{i_1,\dots,i_k;h}$ is a $2k \times k$ integer matrix, its kernel must have dimension at least 1 and a basis of integer vectors.  
\end{proof}

We can now use this computation to prove Proposition~\ref{prop:y0-nonzero}.

\begin{proof}[Proof of Proposition~\ref{prop:y0-nonzero}]
If $\cinvt(K) = 0$ but $I^\#(X_0,\tilde\nu_0)$ is injective, then Remark~\ref{rmk:W-shaped} says that exactly one $y_i$ is nonzero, and by Lemma~\ref{lem:conjugation-y_i} it must be $y_0$.

Now suppose that $\cinvt(K)$ is positive but that $y_0 = 0$.  We let
\[ 0 < i_1 < i_2 < \dots < i_k \leq g(K)-1 \]
be the sequence of integers such that $y_i \neq 0$ if and only if $i = \pm i_j$ for some $j$.  Fixing some integer $h \geq g(K)-1$, Proposition~\ref{prop:M-invertible} says that the matrix $M_{i_1,\dots,i_k;h}$ is invertible, and then the matrix $N_{i_1,\dots,i_k;h}$ defined in \eqref{eq:N-matrix-def} must define an injective map $\Z^k \to \Z^{2k}$, by the observation \eqref{eq:N-kernel-implication}.  Proposition~\ref{prop:N-kernel} says that this map has nontrivial kernel, however, so we have a contradiction.
\end{proof}

This completes the proof of Theorem~\ref{thm:nu-odd}. \qed

\section{Framed instanton homology of zero-surgery} \label{sec:zero-surgery}

If $K \subset S^3$ is a knot satisfying $\cinvt(K) \neq 0$, then Theorem~\ref{thm:dim-surgery} applies to zero-framed surgery on $K$ with any choice of bundle: that is, we have
\[ \cinvt(K) \neq 0 \ \Longrightarrow\ \dim I^\#(S^3_0(K)) = \dim I^\#(S^3_0(K),\mu) = r_0(K) + |\cinvt(K)|. \]
(Here we recall that $I^\#(S^3_0(K))$ is defined in terms of the trivial $U(2)$-bundle over $S^3_0(K)$, while $I^\#(S^3_0(K))$ uses a bundle $E \to S^3_0(K)$ with $c_1(E)$ Poincar\'e dual to $\mu$.)  This independence is explicitly stated in \cite[Proposition~3.3]{bs-concordance}; it follows from the $p/q=\pm1$ cases of Theorem~\ref{thm:dim-surgery} and the surgery exact triangle.  But when $\cinvt(K) = 0$, the framed instanton homology of $S^3_0(K)$ need not be equal to $r_0(K)+|\cinvt(K)|$.  Here we use the conjugation symmetry of Theorem~\ref{thm:conjugation} to investigate this remaining case. 

\begin{theorem} \label{thm:nu-zero-surgery}
Let $Y$ be an integer homology sphere with $\dim I^\#(Y) = 1$, and let $K \subset Y$ be a knot with meridian $\mu$ such that
\[ \dim I^\#(Y_{-1}(K)) = \dim I^\#(Y_1(K)) = d. \]
Then $\dim I^\#(Y_0(K))$ and $\dim I^\#(Y_0(K),\mu)$ are equal to $d-1$ and $d+1$ in some order.
\end{theorem}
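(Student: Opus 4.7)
The plan is to derive the two dimensions from surgery exact triangles and then leverage the conjugation symmetry (Theorem~\ref{thm:conjugation}) to distinguish them.

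First, since $Y$ and $Y_{\pm 1}(K)$ are all integer homology spheres, the meridian $\mu$ is nullhomologous in all three, so Scaduto's general triangle \eqref{eq:surgery-triangle-general} specializes (by taking $\lambda = 0$ or $\lambda = \mu$ on $Y$) to two exact triangles
\[ I^\#(Y) \xrightarrow{\tilde F_0} I^\#(Y_0(K),\mu) \to I^\#(Y_1(K)) \to I^\#(Y)[1] \]
and
\[ I^\#(Y) \xrightarrow{F_0} I^\#(Y_0(K)) \to I^\#(Y_1(K)) \to I^\#(Y)[1], \]
where $\tilde F_0 = I^\#(X_0,\tilde\nu_0)$ and $F_0 = I^\#(X_0,\nu_0)$ are the two cobordism maps associated to the trace of $0$-surgery. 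Because $\dim I^\#(Y) = 1$, a rank count in either triangle gives $\dim I^\#(Y_0(K),\lambda) = d+1-2r$ where $r \in \{0,1\}$ is the rank of the connecting map $I^\#(Y_1(K)) \to I^\#(Y)$; equivalently, the middle dimension equals $d+1$ when the starting cobordism map is injective and $d-1$ when it vanishes. Thus $\dim I^\#(Y_0(K))$ and $\dim I^\#(Y_0(K),\mu)$ each lie in $\{d-1,d+1\}$.

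Second, the parity is consistent with this range: by \eqref{eq:chi}, $\chi I^\#(Y_{\pm 1}(K)) = 1$, so $d$ is odd; and since $b_1(Y_0(K)) = 1$, both $\chi I^\#(Y_0(K))$ and $\chi I^\#(Y_0(K),\mu)$ vanish, so both dimensions are even. An analogous pair of triangles using $Y_{-1}(K)$ in place of $Y_1(K)$ yields the same constraints, and forces the ranks of the four different connecting maps to match up pairwise.

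The crux is to show that the two dimensions are \emph{distinct}, i.e.~that exactly one of $\tilde F_0$ and $F_0$ is zero. Write $\tilde F_0({\bf 1}) = \sum_i y_i$ and $F_0({\bf 1}) = \sum_i y'_i$ via the eigenspace decomposition from Section~\ref{sec:zero-surgery-maps}. Applying Theorem~\ref{thm:conjugation} to $(X_0,\tilde\nu_0)$ and to $(X_0,\nu_0)$ separately gives Lemma~\ref{lem:conjugation-y_i}-type symmetries $y_i \neq 0 \Leftrightarrow y_{-i} \neq 0$ and $y'_i \neq 0 \Leftrightarrow y'_{-i} \neq 0$. The strategy is to rule out the ``both nonzero'' and ``both zero'' cases by a case analysis that parallels the linear algebra of Section~\ref{sec:parity}. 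In the ``both zero'' case, one uses Lemma~\ref{lem:image-zni} (adapted to $Y$) and the adjunction inequality to cascade the vanishing upward, showing that $I^\#(X_n,\nu_n)$ must vanish for all $n \geq 0$, which combined with the dimension formula from the higher surgery triangles yields a numerical contradiction against the hypothesis $\dim I^\#(Y_1(K)) = d$. In the ``both nonzero'' case one argues dually, possibly running the same analysis for $-K$ or for the cobordism in the opposite direction, to reach a similar contradiction.

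The main obstacle is this last structural step: extracting enough information from the four surgery triangles (combined with the conjugation symmetry and the composition formulas of Lemma~\ref{lem:X-W-composition}) to rule out both degenerate cases. The most likely route is to exploit that $\tilde\nu_0$ and $\nu_0$ differ by a closed class in $H_2(X_0)$ (a $2$-sphere obtained by gluing the two disks bounded by $\mu$), so that the two decompositions $\{y_i\}$ and $\{y'_i\}$ are coupled through the structure theorem \eqref{eq:structure-theorem}; the technical heart of the proof will be translating this coupling into the desired parity statement $\dim I^\#(Y_0(K)) + \dim I^\#(Y_0(K),\mu) = 2d$.
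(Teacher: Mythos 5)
Your setup matches the paper: the two exact triangles force each of $\dim I^\#(Y_0(K))$ and $\dim I^\#(Y_0(K),\mu)$ to equal $d\pm 1$, and conjugation symmetry is indeed the key tool for one half of the argument. But both of your proposed routes for the crux step have real gaps, and you have misidentified where the two mechanisms are actually applied.

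For the ``both $d-1$'' case (i.e.\ $F_0 = 0$ and $\tilde F_0 = 0$), your cascading argument does not produce a contradiction. From $y_i = 0$ for all $i$, Lemma~\ref{lem:image-zni} together with the adjunction bound does give $z_{1,i} = 0$ for all $i$, and iterating gives $I^\#(X_n,\nu_n) = 0$ for all $n\geq 1$. But the consequence is merely that $\dim I^\#(Y_n(K)) = (d-1) + n$ for $n\geq 0$, which is perfectly consistent with $\dim I^\#(Y_1(K)) = d$; dimensions increase, they do not go negative. The information you need comes from the other side, and the paper obtains it by looking at the $(-1)$-framed trace $X_{-1}$. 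The special feature is that $s_{-1,i}([\Sigma_{-1}]) = 2i+1$, so $-s_{-1,i} = s_{-1,-1-i}$: the conjugation pairing shifts indices by $-1$, not by sign. Concretely, once $F_0 = 0$ forces each $y'_i = 0$, one shows each $z_{-1,i} := I^\#(X_{-1},\nu_{-1};s_{-1,i})(\bfone)$ lands in $\ker I^\#(W_0,\omega_0)$ and is therefore a scalar multiple of the homogeneous, conjugation-fixed class $I^\#(X_{-1},\nu_{-1})(\bfone)$; Theorem~\ref{thm:conjugation} then gives $z_{-1,i} = z_{-1,-1-i}$. Feeding this through the sign-change formula relating $\nu_{-1}$ and $\tilde\nu_{-1} = \nu_{-1}+[\Sigma_{-1}]$ flips the sign, yielding $I^\#(X_{-1},\tilde\nu_{-1};s_{-1,i}) = -I^\#(X_{-1},\tilde\nu_{-1};s_{-1,-1-i})$, and summing over all $i$ kills $I^\#(X_{-1},\tilde\nu_{-1})$ entirely, forcing $\dim I^\#(Y_0(K),\mu) = d+1$.

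For the ``both $d+1$'' case, conjugation plays no role at all. Here the paper uses the composite cobordisms from $Y_0(K)$ to itself obtained by following the $W$-type cobordism in one triangle with the $X_0$-type cobordism in the other. With the right bundle decorations these composites contain an embedded $2$-sphere of self-intersection zero carrying a nontrivial bundle, so the induced maps on $I^\#$ vanish. Injectivity of one $F$-type map then forces the complementary connecting map to vanish, so the other dimension must be $d-1$. Your proposed alternative of ``running the analysis for $-K$'' cannot work in this generality: for a knot in an arbitrary homology sphere $Y$, the mirror is not naturally a knot in $Y$, which is precisely the obstruction the paper flags in its remark following Corollary~\ref{cor:nu-0-twisted}. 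Also, $\nu_0$ and $\tilde\nu_0$ do not differ by a closed class in $H_2(X_0)$: their boundary curves lie on opposite ends of the cobordism, so their difference is a relative class and the structure-theorem coupling you sketch is not available. The embedded sphere the paper exploits lives in a composite cobordism $Z_0 \cup_{Y_0(K)} X_0$, not in $X_0$ itself.
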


As noted in Remark \ref{rmk:splice}, we have applied Theorem~\ref{thm:nu-zero-surgery} elsewhere, as \cite[Proposition~3.2]{bs-splice}, in order to prove that many integer homology spheres built by splicing nontrivial knot complements cannot be instanton L-spaces. 

\begin{corollary} \label{cor:nu-0-twisted}
A knot $K \subset S^3$ has $\cinvt(K)=0$ if and only if
\[ \dim I^\#(S^3_0(K)) \neq \dim I^\#(S^3_0(K),\mu), \]
in which case these dimensions are $r_0(K)$ and $r_0(K)+2$ in some order.
\end{corollary}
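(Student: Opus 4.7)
The plan is to handle the two directions of the biconditional separately, with the nontrivial content packaged in Theorem~\ref{thm:nu-zero-surgery} and Theorem~\ref{thm:dim-surgery}. The corollary is essentially an unpacking of those two results in the special case $Y = S^3$, so I expect no real obstacle beyond bookkeeping.

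For the ``if'' direction (equivalently, the contrapositive of the first implication), suppose $\cinvt(K) \neq 0$. Then the discussion in the opening paragraph of Section~\ref{sec:zero-surgery} (or equivalently \cite[Proposition~3.3]{bs-concordance}) gives
\[ \dim I^\#(S^3_0(K)) = \dim I^\#(S^3_0(K),\mu) = r_0(K) + |\cinvt(K)|, \]
so the two dimensions agree. Contrapositively, if the two dimensions disagree, then $\cinvt(K) = 0$.

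For the ``only if'' direction, suppose $\cinvt(K) = 0$. I would apply Theorem~\ref{thm:nu-zero-surgery} with $Y = S^3$, which has $\dim I^\#(S^3) = 1$. By Theorem~\ref{thm:dim-surgery} applied to the slopes $\pm 1$, we have
\[ \dim I^\#(S^3_{\pm 1}(K)) = r_0(K) + |\pm 1 - \cinvt(K)| = r_0(K) + 1, \]
so the hypothesis of Theorem~\ref{thm:nu-zero-surgery} is satisfied with $d = r_0(K) + 1$. The conclusion of that theorem then tells us that $\dim I^\#(S^3_0(K))$ and $\dim I^\#(S^3_0(K),\mu)$ equal $d - 1 = r_0(K)$ and $d + 1 = r_0(K) + 2$ in some order. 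In particular these two dimensions are unequal, and they take exactly the claimed values.

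The only thing to verify is that both implications have been proved, which they have: when $\cinvt(K) \neq 0$ the dimensions coincide, and when $\cinvt(K) = 0$ they differ by exactly $2$ and realize $\{r_0(K), r_0(K) + 2\}$. There is no hard step here beyond citing Theorem~\ref{thm:nu-zero-surgery}; all of the difficulty lives in that theorem, whose proof exploits the conjugation symmetry of Theorem~\ref{thm:conjugation}.
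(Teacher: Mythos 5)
Your proposal is correct and matches the paper's own proof almost line for line: the $\cinvt(K)\neq 0$ case is handled by citing \cite[Proposition~3.3]{bs-concordance}, and the $\cinvt(K)=0$ case by computing $d = \dim I^\#(S^3_{\pm1}(K)) = r_0(K)+1$ from Theorem~\ref{thm:dim-surgery} and then invoking Theorem~\ref{thm:nu-zero-surgery}. No issues.
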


\begin{proof}
In the case $\cinvt(K) \neq 0$ this is part of \cite[Proposition~3.3]{bs-concordance}.  When $\cinvt(K)=0$, we let $d=\dim I^\#(S^3_{\pm1}(K))$, which is $r_0(K)+1$ by Theorem~\ref{thm:dim-surgery}.  Then Theorem~\ref{thm:nu-zero-surgery} says that $\dim I^\#(S^3_0(K))$ and $\dim I^\#(S^3_0(K),\mu)$ are equal to $d-1 = r_0(K)$ and $d+1 = r_0(K)+2$ in some order.
\end{proof}

\begin{remark}
Theorem~\ref{thm:nu-zero-surgery} allows for $\cinvt(K)$ and $r_0(K)$ to be defined for knots not just in $S^3$ but in any integer homology sphere $Y$ with $\dim I^\#(Y) = 1$.  These will still satisfy the formula
\[ \dim I^\#(Y_{p/q}(K)) = q\cdot r_0(K) + |p - q\cinvt(K)|, \]
just as in Theorem~\ref{thm:dim-surgery}, though we do not claim that $\cinvt(K)$ is a quasi-morphism for knots in manifolds other than $S^3$.  The key is to show (following arguments of \cite{bs-concordance}) that the sequence \[\left(\dim I^\#(Y_n(K))\right)_{n\in\Z}\] either has a unique local minimum at some $n$, or it has two at $n=\pm1$; in the latter case we replace $I^\#(Y_0(K))$ with $I^\#(Y_0(K),\mu)$, and by Theorem~\ref{thm:nu-zero-surgery} it now has a unique minimum at $n=0$.  We take $\cinvt(K)$ to be that value of $n$, and proceed as in \cite{bs-concordance}.  This avoids the definition for $K\subset S^3$ in \cite{bs-concordance} by studying surgeries on both $K$ and its mirror, since $\mirror{K} \subset Y$ does not make sense in a general $Y$.
\end{remark}

We now prove Theorem~\ref{thm:nu-zero-surgery} in several steps.

\begin{lemma} \label{lem:nu-zero-surgery-not-d+1}
Let $(Y,K)$ be as in Theorem~\ref{thm:nu-zero-surgery}, with $d = \dim I^\#(Y_1(K))$.  Then $\dim I^\#(Y_0(K))$ and $\dim I^\#(Y_0(K),\mu)$ cannot both be $d+1$.
\end{lemma}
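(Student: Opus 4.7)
Plan. I would assume, for contradiction, that
\[ \dim I^\#(Y_0(K)) = \dim I^\#(Y_0(K),\mu) = d+1, \]
and derive a contradiction by combining rank information from four variants of the surgery exact triangle \eqref{eq:surgery-triangle-general} with the composition identity \eqref{eq:triangle-blowup} and the conjugation symmetry of Theorem~\ref{thm:conjugation}.

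First, I would extract what the assumption forces. The two triangles
\[ I^\#(Y) \to I^\#(Y_0(K)) \to I^\#(Y_1(K)) \qquad\text{and}\qquad I^\#(Y) \to I^\#(Y_0(K),\mu) \to I^\#(Y_1(K)), \]
each of total dimensions $(1,d+1,d)$, are forced by exactness to begin with an injection, so $I^\#(X_0,\nu_0)$ and $I^\#(X_0,\tilde\nu_0)$ are both injective. By the same rank calculation, the two triangles
\[ I^\#(Y) \to I^\#(Y_{-1}(K)) \to I^\#(Y_0(K)) \qquad\text{and}\qquad I^\#(Y) \to I^\#(Y_{-1}(K)) \to I^\#(Y_0(K),\mu), \]
each of total dimensions $(1,d,d+1)$, force the two cobordism maps $I^\#(Y) \to I^\#(Y_{-1}(K))$ coming from $\lambda = 0$ and $\lambda = \mu$ in \eqref{eq:surgery-triangle-general} to vanish identically.

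Next, I would transfer this vanishing to the $X_0$ side via \eqref{eq:triangle-blowup}, which identifies $X_{-1} \cup_{Y_{-1}(K)} W_0 \cong X_0 \# \cptwo$. Running the argument that proves Lemma~\ref{lem:X-W-composition} verbatim with $n=-1$, the blow-up formula for the exceptional sphere shows that, for each $j$, only the summands $i \in \{j-1,j\}$ of $I^\#(X_{-1},\nu_{-1};s_{-1,i})$ contribute to $I^\#(W_0,\omega_0;u_{0,j}) \circ I^\#(X_{-1},\nu_{-1};\,\cdot\,)$, and moreover both contributions land in the \emph{same} summand $I^\#(X_0,\nu_0;s_{0,j})$. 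Summing over $i$ and evaluating on $\bfone \in I^\#(Y)$, the vanishing of $I^\#(X_{-1},\nu_{-1})$ then becomes an identity
\[ \kappa_j \cdot y_j' = 0 \text{ for every } j, \]
where $y_j' = I^\#(X_0,\nu_0;s_{0,j})(\bfone) \in I^\#(Y_0(K);t_j)$ and $\kappa_j$ is the sum of the two signs produced by the $s'([E]) = \pm 1$ extensions in the blow-up formula. The same computation for the $\lambda = \mu$ decoration yields an analogous identity $\kappa_j' \cdot y_j = 0$ on $y_j = I^\#(X_0,\tilde\nu_0;s_{0,j})(\bfone) \in I^\#(Y_0(K),\mu;t_j)$.

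Finally, I would argue that $\kappa_j$ and $\kappa_j'$ cannot both vanish. The two $\lambda$-decorations differ by the cylinder $[0,1] \times \mu_K$, which changes $w_2$ of the $U(2)$-bundle on $X_{-1} \cup W_0 \cong X_0 \# \cptwo$ by the exceptional class $[E] \pmod{2}$, and so flips exactly one of the two $s'([E]) = \pm 1$ contributions. For whichever decoration yields a nonzero sign sum, the eigenspace-wise identity forces every $y_j'$ (or every $y_j$) to vanish, contradicting the injectivity obtained in the first step, since $I^\#(X_0,\nu_0)(\bfone) = \sum_j y_j'$ and $I^\#(X_0,\tilde\nu_0)(\bfone) = \sum_j y_j$ are nonzero. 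The main obstacle is establishing the $n = -1$ analog of Lemma~\ref{lem:X-W-composition} and verifying the claimed sign flip between the two $\lambda$-decorations; Theorem~\ref{thm:conjugation} plays a background role through the pairing $y_j \leftrightarrow y_{-j}$ of Lemma~\ref{lem:conjugation-y_i}, which is automatically consistent with the ``all $y_j$ vanish'' conclusion.
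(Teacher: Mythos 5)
Your approach is genuinely different from the paper's, and considerably more involved. The paper's proof never touches the eigenspace decomposition of cobordism maps or the blow-up formula. It uses only the four exact triangles \eqref{eq:d+1-1}--\eqref{eq:d+1-4} plus a single topological observation: the composites $I^\#(X_0,\tilde\nu_0)\circ I^\#(Z_0,\zeta_0)$ and $I^\#(X_0,\nu_0)\circ I^\#(Z_0,\tilde\zeta_0)$ vanish identically, because the composite cobordisms $Z_0\cup_Y X_0$ contain an embedded $2$-sphere carrying a nontrivial bundle. With $\dim I^\#(Y_0(K))=d+1$, exactness of \eqref{eq:d+1-1} makes $I^\#(X_0,\nu_0)$ injective, so the vanishing composite forces $I^\#(Z_0,\tilde\zeta_0)=0$, and \eqref{eq:d+1-4} then gives $\dim I^\#(Y_0(K),\mu)=d-1$ immediately; the other case is symmetric. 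This is shorter than your route and needs no analysis of basic classes at all.

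Your route passes through an unstated $n=-1$ analogue of Lemma~\ref{lem:X-W-composition}, which the paper never records (in Lemma~\ref{lem:nu-zero-surgery-conjugate} it only runs enough of the blow-up argument to say ``$I^\#(W_0,\omega_0)(z_{-1,i})$ is a linear combination of the $y_j'$'' without pinning down coefficients). This is where the real work hides, and the step ``$\kappa_j$ and $\kappa_j'$ cannot both vanish'' needs more than you give it. For one thing, the two decorations on $X_{-1}\cup W_0\cong X_0\#\cptwo$ differ by a class $\alpha$ with $\alpha\cdot(\Sigma_0-E)$ odd, which forces $\alpha\cdot E$ odd mod $2$ (so your $[E]$ claim is essentially right) but leaves a possible $\Sigma_0$-component, and the sign-change formula of \cite[Theorem~1.16]{bs-lspace} sees the \emph{integral} $\alpha$ through $s(\alpha)$, not just $\alpha\bmod 2$. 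More seriously, your contradiction requires that whichever of $\kappa_j,\kappa_j'$ is nonzero be the \emph{same for all $j$}; if that failed (say $\kappa_j$ vanished for odd $j$ and $\kappa_j'$ for even $j$), you would get $y_j'=0$ for some $j$ and $y_j=0$ for the others, which kills neither $\sum_j y_j'$ nor $\sum_j y_j$. A careful computation of the $n=-1$ composition formula shows the $j$-dependence of the two blow-up contributions is a common overall sign, so this can be repaired; but it is a genuine verification, not a background remark, and it should be made explicit if you want this argument to replace the paper's.
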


\begin{proof}
This is essentially already contained in the proof of \cite[Proposition~3.3]{bs-concordance}.  We consider two surgery exact triangles involving $I^\#(Y_0(K))$, both special cases of \eqref{eq:surgery-triangle}:
\begin{align}
\dots &\to I^\#(Y) \xrightarrow{I^\#(X_0,\nu_0)} I^\#(Y_0(K)) \to I^\#(Y_1(K)) \to \dots \label{eq:d+1-1}\\
\dots &\to I^\#(Y) \to I^\#(Y_{-1}(K)) \to I^\#(Y_0(K)) \xrightarrow{I^\#(Z_0,\zeta_0)} \dots \label{eq:d+1-2}
\end{align}
as well as two more involving $I^\#(Y_0(K),\mu)$, the first of which is \eqref{eq:surgery-triangle-0} and the second of which is derived similarly from \eqref{eq:surgery-triangle-general}:
\begin{align}
\dots &\to I^\#(Y) \xrightarrow{I^\#(X_0,\tilde\nu_0)} I^\#(Y_0(K),\mu) \to I^\#(Y_1(K)) \to \dots \label{eq:d+1-3} \\
\dots &\to I^\#(Y) \to I^\#(Y_{-1}(K)) \to I^\#(Y_0(K),\mu) \xrightarrow{I^\#(Z_0,\tilde\zeta_0)} \dots. \label{eq:d+1-4}
\end{align}
Exactly as in \cite[Proposition~3.3]{bs-concordance}, the composite cobordisms
\[ I^\#(X_0,\tilde\nu_0) \circ I^\#(Z_0,\zeta_0): I^\#(Y_0(K)) \to I^\#(Y_0(K),\mu) \]
and
\[ I^\#(X_0,\nu_0) \circ I^\#(Z_0,\tilde\zeta_0): I^\#(Y_0(K),\mu) \to I^\#(Y_0(K)) \]
are both identically zero, because they represent cobordisms containing an embedded 2-sphere equipped with a nontrivial bundle.  But if $\dim I^\#(Y_0(K))=d+1$ then $I^\#(X_0,\nu_0)$ is injective by the exactness of \eqref{eq:d+1-1}, so now $I^\#(Z_0,\tilde\zeta_0)$ must be zero, whence by \eqref{eq:d+1-4} we have $\dim I^\#(Y_0(K),\mu) = d-1$.  Similarly, if $\dim I^\#(Y_0(K),\mu)=d+1$ then $I^\#(X_0,\tilde\nu_0)$ is injective by \eqref{eq:d+1-3}, so $I^\#(Z_0,\zeta_0) = 0$, and then \eqref{eq:d+1-2} says that $\dim I^\#(Y_0(K)) = d-1$.
\end{proof}

\begin{lemma} \label{lem:nu-zero-surgery-yi-zero}
Let $(Y,K)$ be as in Theorem~\ref{thm:nu-zero-surgery}, with $d = \dim I^\#(Y_1(K))$, and fix a generator $\bfone \in I^\#(Y)$.  If $\dim I^\#(Y_0(K)) = d - 1$, then the elements
\[ y_i' := I^\#(X_0,\nu_0;s_i)(\bfone) \in I^\#(Y_0(K)) \]
defined in \eqref{eq:tilde-y_i} are all zero.  Similarly, if $\dim I^\#(Y_0(K),\mu) = d - 1$, then the elements
\[ y_i := I^\#(X_0,\tilde\nu_0;s_i)(\bfone) \in I^\#(Y_0(K),\mu) \]
defined in \eqref{eq:y_i} are all zero.
\end{lemma}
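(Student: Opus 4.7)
The plan is to deduce in each case that the relevant cobordism map $I^\#(X_0,\nu_0)$ or $I^\#(X_0,\tilde\nu_0)$ is identically zero, and then use the eigenspace decomposition to conclude that every $y_i'$ (resp.\ $y_i$) vanishes.

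First I would exploit the surgery exact triangles \eqref{eq:d+1-1} and \eqref{eq:d+1-3} from the proof of Lemma~\ref{lem:nu-zero-surgery-not-d+1}, which exist in this more general setting since the triangle \eqref{eq:surgery-triangle-general} is available for any framed knot in any closed $3$-manifold. For an exact triangle of finite-dimensional vector spaces
\[ A \xrightarrow{f} B \xrightarrow{g} C \xrightarrow{h} A \]
a short calculation using $\dim \img f + \dim \img g = \dim B$, and the two analogous identities, yields
\[ 2\dim \img f = \dim A + \dim B - \dim C. \]
Applying this to \eqref{eq:d+1-1} with $(\dim A,\dim B,\dim C)=(1,d-1,d)$ forces $\dim \img I^\#(X_0,\nu_0)=0$, i.e.\ $I^\#(X_0,\nu_0)=0$; the identical argument applied to \eqref{eq:d+1-3} shows $I^\#(X_0,\tilde\nu_0)=0$ under the hypothesis of the second case.

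Next I would unpack this vanishing via the decomposition of cobordism maps from \cite{bs-lspace}. Since $I^\#(X_0,\nu_0) = \sum_{s\colon H_2(X_0)\to\Z} I^\#(X_0,\nu_0;s)$, evaluating on $\bfone$ gives
\[ 0 = I^\#(X_0,\nu_0)(\bfone) = \sum_i y_i', \]
where only the homomorphisms $s=s_i$ (those sending $[\hat\Sigma]$ to an even integer) contribute nonzero summands. Each $y_i'$ lies in the distinct eigenspace $I^\#(Y_0(K);t_i)$, and these eigenspaces form a direct sum inside $I^\#(Y_0(K))$, so the vanishing of the sum forces $y_i'=0$ for every $i$. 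The same argument with $\tilde\nu_0$ in place of $\nu_0$ and $(Y_0(K),\mu)$ in place of $Y_0(K)$ handles the second case.

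I do not anticipate any real obstacle: the content is essentially an Euler-characteristic-style dimension count in a surgery exact triangle, combined with the linear independence provided by the eigenspace decomposition. The only point to verify carefully is that the decomposition in \cite{bs-lspace} applies in the slightly more general setting of a knot in an arbitrary integer homology sphere $Y$ with $\dim I^\#(Y)=1$, but this follows because the structure theorem \eqref{eq:structure-theorem} was proved for arbitrary cobordisms with $b_1=0$, and the trace $X_0$ has $b_1(X_0)=0$ regardless of the ambient $Y$.
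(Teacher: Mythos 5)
Your proof is correct and follows essentially the same two steps as the paper: read off from the exact triangle \eqref{eq:d+1-1} (resp.\ \eqref{eq:d+1-3}) that $I^\#(X_0,\nu_0)$ (resp.\ $I^\#(X_0,\tilde\nu_0)$) vanishes, then use the eigenspace decomposition to conclude each summand, and hence each $y_i'$ (resp.\ $y_i$), is zero. The paper simply says the vanishing of the cobordism map holds ``for dimension reasons,'' whereas you spell out the rank count $2\dim\img f = \dim A + \dim B - \dim C$; the rest is the same argument.
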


\begin{proof}
Suppose that $\dim I^\#(Y_0(K)) = d-1$.  Then we examine \eqref{eq:d+1-1} to see that $I^\#(X_0,\nu_0) = 0$ for dimension reasons.  In the decomposition
\[ I^\#(X_0,\nu_0) = \sum_{i \in \Z} I^\#(X_0,\nu_0;s_i), \]
the $s_i$ each satisfy $s_i([\hat\Sigma]) = 2i$ as defined in \eqref{eq:def-si}, and restrict to $t_i: H_2(Y_0(K);\Z) \to \Z$ as in \eqref{eq:def-ti}.  Then each
\[ I^\#(X_0,\nu_0;s_i): I^\#(Y) \to I^\#(Y_0(K); t_i) \]
has its image in a different eigenspace of $I^\#(Y_0(K))$, and since their sum is zero, they must all individually be zero as well.  (This fact is the reason we can only prove Theorem~\ref{thm:nu-zero-surgery} for zero-surgery: for other surgeries, the eigenspace decomposition is trivial.)  The elements $y_i'$ lie in the images of these maps, so they are all zero.

In the case where $\dim I^\#(Y_0(K),\mu) = d-1$, we apply the same argument using the triangle \eqref{eq:d+1-3} to see that $I^\#(X_0,\tilde\nu_0;s_i)=0$ for all $i$, hence $y_i=0$ for all $i$ as well.
\end{proof}

For the next lemma, we examine the triangles \eqref{eq:d+1-2} and \eqref{eq:d+1-4} in more depth.  The first of these has the form
\begin{equation} \label{eq:-1-triangle-untwisted}
\dots \to I^\#(Y) \xrightarrow{I^\#(X_{-1},\nu_{-1})} I^\#(Y_{-1}(K)) \xrightarrow{I^\#(W_0,\omega_0)} I^\#(Y_0(K)) \to \dots,
\end{equation}
where if $[\Sigma_{-1}]$ is a generator of $H_2(X_{-1};\Z)$ built by capping off a Seifert surface for $K$ with the core of the 2-handle, then $\nu_{-1} \cdot [\Sigma_{-1}]$ is odd.  Similarly, the second one has the form
\begin{equation} \label{eq:-1-triangle-twisted}
\dots \to I^\#(Y) \xrightarrow{I^\#(X_{-1},\tilde\nu_{-1})} I^\#(Y_{-1}(K)) \xrightarrow{I^\#(W_0,\tilde\omega_0)} I^\#(Y_0(K),\mu) \to \dots.
\end{equation}
The surfaces decorating $X_{-1}$ in these triangles are related by $\tilde\nu_{-1} = \nu_{-1} + \Sigma_{-1}$ in $H_2(X_{-1})$; the intersection products $\nu_{-1}\cdot\Sigma_{-1}$ and $\tilde\nu_{-1}\cdot\Sigma_{-1}$ are odd and even respectively.  We also define homomorphisms
\[ s_{-1,i}: H_2(X_{-1};\Z) \to \Z \]
by $s_{-1,i}([\Sigma_{-1}]) = 2i+1$, exactly as in \eqref{eq:ts-homomorphisms}.

\begin{lemma} \label{lem:nu-zero-surgery-conjugate}
Take $(Y,K)$, $d = \dim I^\#(Y_1(K))$, and $\bfone \in I^\#(Y)$ as in Lemma~\ref{lem:nu-zero-surgery-yi-zero}.  If $\dim I^\#(Y_0(K)) = d-1$, then the elements
\[ z_{-1,i} := I^\#(X_{-1},\nu_{-1};s_{-1,i})(\bfone) \]
satisfy $z_{-1,i} = z_{-1,-1-i}$ for all $i$.  If $\dim I^\#(Y_0(K),\mu)=d-1$, then the elements
\[ \tilde{z}_{-1,i} := I^\#(X_{-1},\tilde\nu_{-1};s_{-1,i})(\bfone) \]
satisfy $\tilde{z}_{-1,i} = \tilde{z}_{-1,-1-i}$ for all $i$.
\end{lemma}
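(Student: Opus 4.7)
My plan is to combine the conjugation symmetry of Theorem~\ref{thm:conjugation} with a pigeonhole argument confining each $z_{-1,i}$ to a distinguished one-dimensional subspace of $I^\#(Y_{-1}(K))$. I will carry out the untwisted case in detail; the twisted version is entirely parallel, with $\tilde\nu_{-1}$, $\tilde\omega_0$ in place of $\nu_{-1}$, $\omega_0$, and the vanishing of the $y_i$ from Lemma~\ref{lem:nu-zero-surgery-yi-zero} replacing that of the $y'_i$.

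First I will run a rank-nullity count along the exact triangle \eqref{eq:-1-triangle-untwisted}. Since $\dim I^\#(Y_{-1}(K)) = d$ exceeds $\dim I^\#(Y_0(K)) = d-1$, the map $I^\#(W_0,\omega_0)$ cannot be injective, so by exactness $I^\#(X_{-1},\nu_{-1})$ must be injective and its image fills the one-dimensional $\ker I^\#(W_0,\omega_0)$. In particular
\[ v := I^\#(X_{-1},\nu_{-1})(\bfone) = \sum_i z_{-1,i} \]
is nonzero and spans that kernel. Next, using the diffeomorphism $X_{-1} \cup_{Y_{-1}(K)} W_0 \cong X_0 \# \cptwo$ (the $n=-1$ instance of \eqref{eq:triangle-blowup}) together with the blowup formula, I will establish an $n=-1$ analog of Lemma~\ref{lem:X-W-composition}, expressing each composition $I^\#(W_0,\omega_0; u_{0,j}) \circ I^\#(X_{-1},\nu_{-1}; s_{-1,i})$ as a scalar multiple of $I^\#(X_0,\nu_0; s_{0,j})$. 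Applied to $\bfone$, this yields $I^\#(W_0,\omega_0; u_{0,j})(z_{-1,i}) = (\mathrm{const})\cdot y'_j = 0$ by Lemma~\ref{lem:nu-zero-surgery-yi-zero}, so summing over $j$ gives $I^\#(W_0,\omega_0)(z_{-1,i}) = 0$ for every $i$; thus $z_{-1,i} = \lambda_i v$ for scalars $\lambda_i$ with $\sum_i \lambda_i = 1$.

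Finally I will apply Theorem~\ref{thm:conjugation} to the cobordism $(X_{-1},\nu_{-1})$ with $s = s_{-1,i}$. Since $s_{-1,i}([\Sigma_{-1}]) = 2i+1$, we have $-s_{-1,i} = s_{-1,-1-i}$; and since $Y$ and $Y_{-1}(K)$ are integer homology spheres, the eigenspace decompositions on either end are trivial. Choosing $k = 0$, so that $c_0$ fixes the grading-$0$ generator $\bfone$, the theorem produces
\[ c_{d'}(z_{-1,i}) = z_{-1,-1-i}, \]
where $d' \in \Z/4\Z$ is the degree of $I^\#(X_{-1},\nu_{-1})$. Summing this identity over $i$ and reindexing shows $c_{d'}(v) = v$, and therefore
\[ z_{-1,-1-i} = c_{d'}(z_{-1,i}) = \lambda_i\, c_{d'}(v) = \lambda_i v = z_{-1,i}, \]
as desired.

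The main obstacle will be establishing the $n=-1$ analog of Lemma~\ref{lem:X-W-composition}, which demands careful bookkeeping of the blowup formula and the signs arising from the identifications in \eqref{eq:X-W-gens}. Once this composition identity is in hand, the rest of the argument is a formal consequence of Theorem~\ref{thm:conjugation} and the kernel analysis above.
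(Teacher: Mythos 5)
Your proposal is correct and follows essentially the same route as the paper's proof: compose through the blowup $X_{-1}\cup W_0 \cong X_0\#\cptwo$ to land in the vanishing $y'_j$ (or $y_j$), deduce that each $z_{-1,i}$ is a scalar multiple of $I^\#(X_{-1},\nu_{-1})(\bfone)$, and then apply the conjugation symmetry of Theorem~\ref{thm:conjugation} with $-s_{-1,i}=s_{-1,-1-i}$. The only cosmetic differences are that you deduce $c_{d'}(v)=v$ by summing the conjugation identity over $i$ rather than by noting $v$ is homogeneous of grading $k+d$ (both work), and that the paper only needs each blown-up summand to be a scalar multiple of some $y'_j$, rather than the full $n=-1$ analog of Lemma~\ref{lem:X-W-composition} you propose to prove.
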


\begin{proof}
Supposing first that $\dim I^\#(Y_0(K)) = d-1$, we look at the triangle \eqref{eq:-1-triangle-untwisted}.  Repeating the argument of \cite[Proposition~7.1]{bs-lspace}, we write \[X_{-1} \cup_{Y_{-1}(K)} W_0 \cong X_0 \# \cptwo,\] by a diffeomorphism identifying $[\Sigma_{-1}]$ with $[\Sigma_0]-[E]$ where $E$ is the exceptional sphere on the right.  We now argue that $I^\#(W_0,\omega_0)$ sends $z_{-1,i}$ to a linear combination of various $y'_j$, all of which are zero.  Indeed, by the composition law for cobordism maps, we have
\begin{align*}
I^\#(W_0,\omega_0)\big(z_{-1,i}\big) &= \left(I^\#(W_0,\omega_0) \circ I^\#(X_{-1},\nu_{-1}; s_{-1,i})\right) (\bfone) \\
&= \sum_{u: H_2(W_0) \to \Z} \left(I^\#(W_0,\omega_0; u) \circ I^\#(X_{-1},\nu_{-1}; s_{-1,i})\right) (\bfone) \\
&= \sum_u \sum_{\substack{h:H_2(X_{-1}\cup W_0) \to \Z \\ h|_{X_{-1}}=s_{-1,i} \\ h|_{W_0}=u}} I^\#(X_{-1} \cup W_0; \nu_{-1} \cup \omega_0; h)(\bfone)
\end{align*}
Then every summand on the right has the form $I^\#(X_0\#\cptwo,\nu;h)(\bfone)$ for some $\nu$, and the blow-up formula and sign change formula of \cite[Theorem~1.16]{bs-lspace} say that such a term is in turn a scalar multiple (possibly zero) of some
\[ I^\#(X_0,\nu_0; s_j)(\bfone) = y_j', \]
which is zero by Lemma~\ref{lem:nu-zero-surgery-yi-zero}.  Thus $I^\#(W_0,\omega_0)(z_{-1,i}) = 0$.

Now by the exactness of \eqref{eq:-1-triangle-untwisted}, the element $z_{-1,i}$ is in the kernel of $I^\#(W_0,\omega_0)$, so it is in the image of $I^\#(X_{-1},\nu_{-1})$ and we can write
\[ z_{-1,i} = a_i \cdot I^\#(X_{-1},\nu_{-1})(\bfone) \]
for some coefficients $a_i \in \Q$.  By definition, the maps $s_{-1,i}: H_2(X_{-1}) \to \Z$ satisfy $s_{-1,i}([\Sigma_{-1}]) = 2i+1$, where $[\Sigma_{-1}]$ generates $H_2(X_{-1}) \cong \Z$.  It follows that $-s_{-1,i} = s_{-1,-1-i}$, so by the conjugation symmetry of Theorem~\ref{thm:conjugation}, we have
\[ c_{k+d} \circ I^\#(X_{-1},\nu_{-1};s_{-1,i}) = I^\#(X_{-1},\nu_{-1};s_{-1,-1-i}) \circ c_k \]
where $d$ is the degree of $I^\#(X_{-1},\nu_{-1})$.  If we apply both sides to the element $\bfone \in I^\#(Y)$ and take $k$ to be the grading of $\bfone$, then $c_k(\bfone)=\bfone$, so that
\[ c_{k+d}(z_{-1,i}) = z_{-1,-1-i}. \]
But $z_{-1,i}$ is a scalar multiple of $I^\#(X_{-1},\nu_{-1})(\bfone)$, which is homogeneous of grading $k+d$ and hence fixed by $c_{k+d}$, so in fact
\[ z_{-1,i} = z_{-1,-1-i} \]
for all $i$.

The proof that if $\dim I^\#(Y_0(K),\mu) = d-1$ then $\tilde{z}_{-1,i} = \tilde{z}_{-1,-1-i}$ is nearly identical.  We use \eqref{eq:-1-triangle-twisted} instead of \eqref{eq:-1-triangle-untwisted} to show that $I^\#(W_0,\tilde\omega_0)(\tilde{z}_{-1,i}) = 0$, because it is a linear combination of the elements $y_j$ which are zero by Lemma~\ref{lem:nu-zero-surgery-yi-zero}.  So then $\tilde{z}_{-1,i}$ is a scalar multiple of $I^\#(X_{-1},\tilde\nu_{-1})(\bfone)$, and we apply the same conjugation symmetry argument to $I^\#(X_{-1},\tilde\nu_{-1})$ (instead of $I^\#(X_{-1},\nu_{-1})$) to get the desired conclusion.
\end{proof}

\begin{proof}[Proof of Theorem~\ref{thm:nu-zero-surgery}]
By assumption we have $\dim I^\#(Y_{-1}(K)) = \dim I^\#(Y_1(K)) = d$.  The triangles \eqref{eq:d+1-1} and \eqref{eq:d+1-3} show that $\dim I^\#(Y_0(K))$ and $\dim I^\#(Y_0(K),\mu)$ must each be either $d-1$ or $d+1$, and Lemma~\ref{lem:nu-zero-surgery-not-d+1} says that they are not both $d+1$, so it remains to be seen that they are not both $d-1$ either.

Supposing that $\dim I^\#(Y_0(K)) = d-1$, we showed in Lemma~\ref{lem:nu-zero-surgery-conjugate} that
\[ I^\#(X_{-1},\nu_{-1}; s_i) = I^\#(X_{-1},\nu_{-1}; s_{-1-i}) \]
for all $i$, since these maps $I^\#(Y) \to I^\#(Y_{-1}(K))$ are determined by their images $z_{-1,i}$ and $z_{-1,-1-i}$.  The sign change formula of \cite[Theorem~1.16]{bs-lspace} says for any $s$ and $\alpha \in H_2(X_{-1};\Z)$ that
\[ I^\#(X_{-1},\nu_{-1}+\alpha; s) = (-1)^{\frac{1}{2}(s(\alpha)+\alpha\cdot\alpha)+\nu_{-1}\cdot\alpha} \cdot I^\#(X_{-1},\nu_{-1};s). \]
Taking $\alpha = [\Sigma_{-1}]$, we have $s_{-1,j}(\alpha) = 2j+1$, $\alpha\cdot\alpha=-1$, $\nu_{-1}\cdot\alpha=-1$, and $\tilde\nu_{-1}=\nu_{-1}+\alpha$, so this simplifies to
\begin{equation} \label{eq:nu-1-change}
I^\#(X_{-1},\tilde\nu_{-1}; s_{-1,j}) = (-1)^{j-1} \cdot I^\#(X_{-1},\nu_{-1};s_{-1,j})
\end{equation}
for all $j$.  Thus
\begin{align*}
I^\#(X_{-1},\tilde\nu_{-1}; s_{-1,i}) &= (-1)^{i-1} \cdot I^\#(X_{-1},\nu_{-1};s_{-1,i}) \\
&= (-1)^{i-1} \cdot I^\#(X_{-1},\nu_{-1};s_{-1,-1-i}) \\
&= (-1)^{i-1} \cdot \left( (-1)^{-i} \cdot I^\#(X_{-1},\tilde\nu_{-1};s_{-1,-1-i}) \right) \\
&= - I^\#(X_{-1},\tilde\nu_{-1};s_{-1,-1-i}).
\end{align*}
Thus we can compute the map associated to $(X_{-1},\tilde\nu_{-1})$ by
\begin{align*}
I^\#(X_{-1},\tilde\nu_{-1}) &= \sum_{i\in\Z} I^\#(X_{-1},\tilde\nu_{-1}; s_{-1,i}) \\
&= \sum_{i \geq 0} \left(I^\#(X_{-1},\tilde\nu_{-1}; s_{-1,i}) + I^\#(X_{-1},\tilde\nu_{-1}; s_{-1,-1-i})\right) \\
&= \sum_{i \geq 0} 0 = 0.
\end{align*}
But this map fits precisely into the exact triangle \eqref{eq:-1-triangle-twisted}, so we conclude that
\[ \dim I^\#(Y_0(K),\mu) = \dim I^\#(Y_{-1}(K)) + \dim I^\#(Y) = d+1. \]

The case where $\dim I^\#(Y_0(K),\mu) = d-1$ is nearly identical.  Here Lemma~\ref{lem:nu-zero-surgery-conjugate} says that
\[ I^\#(X_{-1},\tilde\nu_{-1}; s_i) = I^\#(X_{-1},\tilde\nu_{-1}; s_{-1-i}) \]
for all $i$, so we repeat the argument after \eqref{eq:nu-1-change}, exchanging the roles of $\nu_{-1}$ and $\tilde\nu_{-1}$, to get
\[ I^\#(X_{-1},\nu_{-1}; s_{-1,i}) = - I^\#(X_{-1},\nu_{-1};s_{-1,-1-i}). \]
But then $I^\#(X_{-1},\nu_{-1})$ is equal to
\[ \sum_{i\geq 0} \left(I^\#(X_{-1},\nu_{-1}; s_{-1,i}) + I^\#(X_{-1},\nu_{-1};s_{-1,-1-i})\right) = 0, \]
so \eqref{eq:-1-triangle-untwisted} leads to $\dim I^\#(Y_0(K)) = \dim I^\#(Y_{-1}(K)) + \dim I^\#(Y) = d+1$.
\end{proof}

\section{Bundles over rational homology spheres} \label{sec:bundles}

Given a fixed 3-manifold $Y$, the invariant $I^\#(Y,\lambda)$ only depends on the homology class
\[ [\lambda] \in H_1(Y;\Z/2\Z), \]
when viewed as a $\Z/2\Z$-graded module up to isomorphism.  Theorem~\ref{thm:nu-zero-surgery} says that this dependence is unavoidable when $Y$ is zero-surgery on a knot $K$ with $\cinvt(K)=0$, but otherwise we can ask how common such examples are.  In this section we prove the following.

\begin{theorem} \label{thm:surgery-lambda}
Given a knot $K \subset S^3$ and a nonzero rational number $p/q$, the dimension
\[ \dim I^\#(S^3_{p/q}(K),\lambda) \]
is independent of $\lambda$.
\end{theorem}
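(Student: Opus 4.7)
The dimension of $I^\#(S^3_{p/q}(K),\lambda)$ depends only on the class $[\lambda] \in H_1(S^3_{p/q}(K);\Z/2\Z) \cong \Z/\gcd(p,2)\Z$, so the claim is automatic when $p$ is odd. It remains to handle the case where $p$ is even and $q$ is odd, in which case there are two bundle classes, the trivial one and $\mu$ (the image in $S^3_{p/q}(K)$ of a meridian of $K$), and we must show these give the same dimension.

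The plan is to fit $I^\#(S^3_{p/q}(K),\lambda)$, for each choice of $\lambda \in \{0,\mu\}$, into a surgery exact triangle in which the other two terms are independent of $\lambda$ and have dimensions computed directly by Theorem~\ref{thm:dim-surgery}. Since $\gcd(p,q)=1$, Bezout's identity gives integers $a,b$ with $pb-qa=1$; reducing this modulo $2$ forces $a$ to be odd, whence $p+a$ is odd too. The slopes $p/q$, $a/b$, $(p+a)/(q+b)$ on the boundary torus of $S^3\setminus\nu(K)$ then form a triad (pairwise geometric intersection $1$), and the fillings $S^3_{a/b}(K)$ and $S^3_{(p+a)/(q+b)}(K)$ have odd first homology, hence admit only a trivial bundle. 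Floer's triad-type surgery exact triangle (obtained by iterating \eqref{eq:surgery-triangle-general}) thus gives
\[ \cdots \to I^\#(S^3_{p/q}(K), \lambda) \to I^\#(S^3_{a/b}(K)) \to I^\#(S^3_{(p+a)/(q+b)}(K)) \to \cdots, \]
in which only the first term's bundle depends on $\lambda$. Exactness then reduces the theorem to showing that the connecting map from $I^\#(S^3_{(p+a)/(q+b)}(K))$ to $I^\#(S^3_{p/q}(K),\lambda)$ has the same rank for both choices of $\lambda$.

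This rank equality is the main obstacle. The two connecting maps arise from the same underlying 4-manifold cobordism, equipped with embedded surfaces that differ by $[0,1]\times\mu_K$ together with a cap that can be chosen on the odd-surgery end, where $\mu$ is null-homologous, but not on the $S^3_{p/q}(K)$-end. To compare them, the plan is to invoke the decomposition of cobordism maps with its blow-up and sign-change formulas from \cite[Theorem~1.16]{bs-lspace}, combined with the conjugation symmetry of Theorem~\ref{thm:conjugation} pairing basic classes $s$ and $-s$, to show that the two maps agree up to an invertible change of basis on each eigenspace. The most delicate step will be to carry out this comparison cleanly in the presence of a surface boundary on the $S^3_{p/q}(K)$-end that is itself not null-homologous.
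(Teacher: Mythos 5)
Your setup --- reducing to the case $p$ even, and fitting $I^\#(S^3_{p/q}(K),\lambda)$ into a surgery triad whose other two slopes have odd numerator --- matches the beginning of the paper's proof, but you have misidentified where the difficulty lies, and the two key ingredients of the actual argument are missing from your plan.

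For $q>1$, the connecting map in the triad is in fact \emph{identically zero}: this is the content of \cite[Proposition~4.5]{bs-concordance}, and the vanishing argument there depends only on the smooth topology of the surgery cobordisms, not on the choice of bundle, so it gives $F=F'=0$ for both choices of $\lambda$. Once you know that, both triangles split into short exact sequences and the dimension equality is immediate; there is no rank comparison to do. The elaborate comparison you sketch --- blow-up formula, sign-change formula, conjugation symmetry --- is not only unnecessary in this case, it is also far from obviously going to succeed, since conjugation pairs the basic class $s$ with $-s$ but gives no reason that the two cobordism maps (decorated by $\nu$ versus $\nu+\Sigma$) should have equal rank.

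For $q=1$, the splitting argument genuinely fails: the triad is $\{\infty,\,p-1,\,p\}$ and the connecting map here need not vanish. Your proposal does not notice that this case requires a separate treatment. The paper's argument here is completely different and is the reason this theorem is stated as a corollary of Theorem~\ref{thm:main-nu-odd}: from the surgery triangles for slopes $p-1,p$ and $p,p+1$ (whose first two terms are bundle-independent since $p\pm1$ is odd), one gets
\[
\dim I^\#(S^3_p(K),\lambda) = \dim I^\#(S^3_{p-1}(K)) \pm 1 = \dim I^\#(S^3_{p+1}(K)) \pm 1.
\]
The parity result Theorem~\ref{thm:nu-odd} then says $\cinvt(K)$ is zero or odd, hence $\cinvt(K)\neq p$ for $p$ even and nonzero, so Theorem~\ref{thm:dim-surgery} forces $\dim I^\#(S^3_{p+1}(K)) - \dim I^\#(S^3_{p-1}(K)) = \pm 2$ with a determinate sign, which pins down $\dim I^\#(S^3_p(K),\lambda)$ independently of $\lambda$. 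Without invoking the parity theorem your approach has no way to close this case.
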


\begin{proof}
We take $p$ and $q$ to be relatively prime, with $q \geq 1$.  If $p$ is odd then
\[ H_1(S^3_{p/q}(K);\Z/2\Z) = 0, \]
so there is only one possible $[\lambda]$ and hence there is nothing to prove.  We can thus take $p$ to be even from now on, so that \[H_1(S^3_{p/q}(K);\Z/2\Z) \cong \Z/2\Z,\] generated by the image in $S^3_{p/q}(K)$ of a meridian of $K$.

In the case $q > 1$, we essentially repeat the proof of \cite[Proposition~4.5]{bs-concordance} verbatim.  The argument comes down to the splitting of an exact triangle
\[ \dots \to I^\#(S^3_{a/b}(K)) \to I^\#(S^3_{p/q}(K)) \to I^\#(S^3_{c/d}(K)) \xrightarrow{F} \dots, \]
where $p=a+c$ and $q=b+d$ and the slopes all have pairwise distance one.  The last claim means that
\[ |aq-pb| = |pd-cq| = |cb-ad| = 1, \]
and since $p$ is even it follows that $a$ and $c$ must be odd, so that $I^\#(S^3_{a/b}(K),\lambda)$ and $I^\#(S^3_{c/d}(K),\lambda)$ do not depend on $\lambda$.  We can thus choose $\lambda$ carefully in \eqref{eq:surgery-triangle-general} to replace the above triangle with
\[ \dots \to I^\#(S^3_{a/b}(K)) \to I^\#(S^3_{p/q}(K),\mu) \to I^\#(S^3_{c/d}(K)) \xrightarrow{F'} \dots, \]
and the proof that $F=0$ in \cite{bs-concordance} depends only on the smooth topology of the surgery cobordisms rather than on their respective bundles, so it follows that $F'=0$ as well.  Thus the splittings of these two triangles imply that
\[ \dim I^\#(S^3_{p/q}(K),\mu) = \dim I^\#(S^3_{p/q}(K)) \]
whenever $q > 1$.

In the remaining case, we have $q=1$, so that $p/q$ is a nonzero integer.  Taking $n=p-1$ in \eqref{eq:surgery-triangle-general} gives us an exact triangle
\[ \dots \to I^\#(S^3,\lambda) \to I^\#(S^3_{p-1}(K),\lambda \cup \mu) \to I^\#(S^3_p(K),\lambda) \to \dots, \]
where $\mu$ is the image of a meridian of $K$.  The first two terms are independent of $\lambda$ because $H_1(S^3;\Z/2\Z) = 0$ and $H_1(S^3_{p-1}(K);\Z/2\Z) = 0$, so that
\begin{equation} \label{eq:dim-p-1}
\dim I^\#(S^3_p(K),\lambda) = \dim I^\#(S^3_{p-1}(K)) \pm 1.
\end{equation}
By the same argument, taking $n=p$ and replacing $\lambda$ with $\lambda \cup \mu$ in \eqref{eq:surgery-triangle-general} gives us 
\begin{equation} \label{eq:dim-p+1}
\dim I^\#(S^3_p(K),\lambda) = \dim I^\#(S^3_{p+1}(K)) \pm 1.
\end{equation}
Now since $p$ is nonzero and even, Theorem~\ref{thm:nu-odd} tells us that $p \neq \cinvt(K)$, so then
\[ \dim I^\#(S^3_{p+1}(K)) = \dim I^\#(S^3_{p-1}(K)) + \begin{cases} \hphantom{-}2, & p > \cinvt(K) \\ -2, & p < \cinvt(K) \end{cases} \]
by Theorem~\ref{thm:dim-surgery}.  We combine this with \eqref{eq:dim-p-1} and \eqref{eq:dim-p+1} to conclude that
\[ \dim I^\#(S^3_p(K),\lambda) = \dim I^\#(S^3_{p-1}(K)) + \begin{cases} \hphantom{-}1, & p > \cinvt(K) \\ -1, & p < \cinvt(K) \end{cases} \]
regardless of the choice of $\lambda$.
\end{proof}

\section{The kernel of $\cinvt$ and an instanton Floer epsilon invariant} \label{sec:kernel-nu}

According to Corollary~\ref{cor:nu-0-twisted}, knots $K\subset S^3$ with $\cinvt(K)=0$ fall into exactly one of two classes:
\begin{itemize}
\item \emph{W-shaped} knots, which satisfy
\[ \dim I^\#(S^3_0(K)) = r_0(K)+2 \quad\text{ and }\quad \dim I^\#(S^3_0(K),\mu) =r_0(K); \]
\item \emph{V-shaped} knots, which satisfy
\[ \dim I^\#(S^3_0(K)) = r_0(K) \quad\text{ and }\quad \dim I^\#(S^3_0(K),\mu) =r_0(K)+2. \]
\end{itemize}
The names come from the shape of the graph of $\dim I^\#(S^3_n(K))$ as $n$ varies among the integers.  These can be distinguished by the maps
\begin{align*}
I^\#(X_0,\nu_0)&: I^\#(S^3) \to I^\#(S^3_0(K)) \\
I^\#(X_0,\tilde\nu_0)&: I^\#(S^3) \to I^\#(S^3_0(K),\mu),
\end{align*}
where $X_0$ is the trace of $0$-surgery on $K$; if $\Sigma_0 \subset X_0$ is a capped-off Seifert surface generating $H_2(X_0)$ then $\nu_0\cdot\Sigma_0$ and $\tilde\nu_0\cdot\Sigma_0$ are even and odd, respectively.  (We will occasionally add ``$K$'' to the notation for clarity, as in $X_0(K)$, $\Sigma_{K,0}$, $\nu_{K,0}$, and $\tilde\nu_{K,0}$.)  According to the exact triangles \eqref{eq:d+1-1} and \eqref{eq:d+1-3}, these maps are respectively injective and zero if $K$ is W-shaped, while they are zero and injective if $K$ is V-shaped.

\begin{lemma} \label{lem:zero-surgery-sum}
Let $K$ and $L$ be knots in $S^3$.  Fix $\nu_K \in \{\nu_{K,0},\tilde\nu_{K,0}\}$ and $\nu_L \in \{\nu_{L,0},\tilde\nu_{L,0}\}$.  If $I^\#(X_0(K),\nu_K)$ and $I^\#(X_0(L),\nu_L)$ are both injective, then so is $I^\#(X_0(K\#L),\nu_{K\#L})$, where $\nu_{K\#L} \in \{\nu_{K\#L,0},\tilde\nu_{K\#L,0}\}$ satisfies
\[ \nu_{K\#L} \cdot \Sigma_{K\#L,0} \equiv \nu_K\cdot\Sigma_{K,0} + \nu_L\cdot\Sigma_{L,0} \pmod{2}. \]
\end{lemma}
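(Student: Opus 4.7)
The plan is to realize $X_0(K\#L)$ as the first piece in a factorization of the larger cobordism $X_0(K \sqcup L)$, where $X_0(K \sqcup L)$ denotes the $4$-manifold obtained by attaching $0$-framed $2$-handles to $B^4$ along $K$ and $L$ placed in disjoint $3$-balls of $\partial B^4$, and then to invoke the Künneth-type connect sum formula for framed instanton homology together with functoriality.

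First I would establish, via a Kirby-calculus argument, a diffeomorphism
\[ X_0(K \sqcup L) \;\cong\; X_0(K\#L) \cup V, \]
where $V$ is a $0$-framed $2$-handle cobordism $S^3_0(K\#L) \to S^3_0(K) \# S^3_0(L)$ attached along a null-homologous embedded curve $\gamma \subset S^3_0(K\#L)$; geometrically, $\gamma$ is the core of a band whose surgery converts $K\#L$ back into $K \sqcup L$. Under this diffeomorphism, the generators $[\hat\Sigma_{K,0}]$ and $[\hat\Sigma_{L,0}]$ of $H_2(X_0(K\sqcup L);\Z/2\Z)$ can be arranged so that their sum is identified with $[\hat\Sigma_{K\#L,0}]$ in $H_2(X_0(K\#L);\Z/2\Z)$. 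This identification reflects the fact that $\Sigma_{K\#L}$ is obtained from $\Sigma_K$ and $\Sigma_L$ by boundary connect sum. Consequently, the decoration $\nu_K \cup \nu_L$ on $X_0(K \sqcup L)$ has
\[ (\nu_K \cup \nu_L) \cdot \hat\Sigma_{K\#L,0} \equiv \nu_K \cdot \Sigma_{K,0} + \nu_L \cdot \Sigma_{L,0} \pmod{2}, \]
so its restriction to $X_0(K\#L)$ defines precisely the $\Z/2\Z$ bundle prescribed by $\nu_{K\#L}$.

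Next I would invoke the Kronheimer--Mrowka--Scaduto connect sum formula, which gives an isomorphism
\[ I^\#(S^3_0(K) \# S^3_0(L), \lambda_K \sqcup \lambda_L) \;\cong\; I^\#(S^3_0(K), \lambda_K) \otimes I^\#(S^3_0(L), \lambda_L) \]
identifying the cobordism map of $X_0(K \sqcup L)$ (with decoration $\nu_K \cup \nu_L$) with the tensor product of the cobordism maps of $X_0(K)$ and $X_0(L)$. Applied to the generator $\bfone \in I^\#(S^3)$, this yields the element $y_K \otimes y_L$, where $y_K = I^\#(X_0(K),\nu_K)(\bfone)$ and similarly for $y_L$; by hypothesis this is nonzero. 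By functoriality under composition of cobordisms,
\[ I^\#(X_0(K \sqcup L), \nu_K \cup \nu_L) \;=\; I^\#(V, \omega_V) \circ I^\#(X_0(K\#L), \nu_{K\#L}), \]
so if $I^\#(X_0(K\#L), \nu_{K\#L})(\bfone)$ were zero, the composition would annihilate $\bfone$, contradicting $y_K \otimes y_L \neq 0$. Since the domain $I^\#(S^3)$ is one-dimensional, nonvanishing at $\bfone$ is equivalent to injectivity.

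The main obstacle is Step~1: the explicit Kirby-calculus verification of the $4$-manifold diffeomorphism together with the matching of $H_2$ classes and bundle decorations. The remaining steps are formal consequences of the connect sum formula and functoriality of framed instanton homology.
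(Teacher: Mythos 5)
Your proposal is correct and follows essentially the same approach as the paper, which cites \cite[Lemma~5.2]{bs-concordance}: embed $X_0(K\#L)$ into the boundary connect sum $X_0(K)\natural X_0(L)$ (your $X_0(K\sqcup L)$) so that $I^\#(X_0(K),\nu_K)\otimes I^\#(X_0(L),\nu_L)$ factors through $I^\#(X_0(K\#L),\nu_{K\#L})$ via Scaduto's connect sum isomorphism, and then read off injectivity from the nonvanishing of $y_K\otimes y_L$. One small geometric imprecision: the curve $\gamma$ along which the cobordism $V$ is attached is not the core of the band but a small circle \emph{linking} the band (a belt circle), so that $0$-surgery on it splits $K\#L$ back into $K\sqcup L$; this does not affect the argument.
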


\begin{proof}
This is a special case of \cite[Lemma~5.2]{bs-concordance}, which is proved by embedding $X_0(K\#L)$ into $X_0(K) \natural X_0(L)$ so that the cobordism map \[I^\#(X_0(K),\nu_{K,0}) \otimes I^\#(X_0(L),\nu_{L,0})\] is equal to a composition
\begin{align*}
I^\#(S^3) \xrightarrow{I^\#(X_0(K\#L),\nu_{K\#L,0})} I^\#(S^3_0(K\#L)) &\xrightarrow{\phantom{\cong}} I^\#(S^3_0(K)\#S^3_0(L)) \\
& \xrightarrow{\cong} I^\#(S^3_0(K)) \otimes I^\#(S^3_0(L)),
\end{align*}
and likewise if we replace either of $\nu_{K,0}$ and $\nu_{L,0}$ with $\tilde\nu_{K,0}$ and $\tilde\nu_{L,0}$.
\end{proof}

\begin{proposition} \label{prop:ker-nu-group}
Let $K$ and $L$ be knots in $S^3$ with $\cinvt(K)=\cinvt(L)=0$.  Then $\cinvt(K\#L) = 0$.  Moreover, $K\#L$ is W-shaped if $K$ and $L$ are both W-shaped or both V-shaped, and it is $V$-shaped otherwise.
\end{proposition}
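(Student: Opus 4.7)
The plan is to use Lemma~\ref{lem:zero-surgery-sum} together with the W/V-shape dichotomy recalled at the start of this section to determine which of the two cobordism maps $I^\#(X_0(K\#L),\nu_0)$ and $I^\#(X_0(K\#L),\tilde\nu_0)$ is injective and which vanishes. Once this is known, the surgery exact triangles \eqref{eq:d+1-1} and \eqref{eq:d+1-3} will force $\dim I^\#(S^3_0(K\#L))\neq\dim I^\#(S^3_0(K\#L),\mu)$, whence $\cinvt(K\#L)=0$ by Corollary~\ref{cor:nu-0-twisted}, and the W/V shape of $K\#L$ will read off directly from which map is injective. Write $\epsilon_J\in\{0,1\}$ for the parity associated to a knot $J$ with $\cinvt(J)=0$, so that $\epsilon_J=0$ iff $J$ is W-shaped; equivalently, the map $I^\#(X_0(J),\nu_0)$ is injective iff $\epsilon_J=0$ and $I^\#(X_0(J),\tilde\nu_0)$ is injective iff $\epsilon_J=1$.

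For the injectivity half I would apply Lemma~\ref{lem:zero-surgery-sum} to the pair $(\nu_K,\nu_L)$ where $\nu_J\in\{\nu_{J,0},\tilde\nu_{J,0}\}$ is the unique decoration making $I^\#(X_0(J),\nu_J)$ injective. Because $\nu_K\cdot\Sigma_{K,0}+\nu_L\cdot\Sigma_{L,0}\equiv\epsilon_K+\epsilon_L\pmod 2$, the lemma yields injectivity of $I^\#(X_0(K\#L),\nu_{K\#L,0})$ when $\epsilon_K+\epsilon_L$ is even (cases WW and VV) and of $I^\#(X_0(K\#L),\tilde\nu_{K\#L,0})$ when $\epsilon_K+\epsilon_L$ is odd (cases WV and VW), exactly matching the shape assignment claimed.

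The remaining (and harder) task is to show the opposite-parity map vanishes. For this I would revisit the equality of cobordism maps behind Lemma~\ref{lem:zero-surgery-sum}, namely
\[ I^\#(X_0(K),\nu_K)\otimes I^\#(X_0(L),\nu_L) \;=\; f_{\nu_K,\nu_L}\circ I^\#(X_0(K\#L),\nu_{K\#L}), \]
applied to an asymmetric pair whose parities still sum to the opposite of the one from the first step but where one of the factors of the tensor product vanishes (e.g., in the WW case, the pair $(\nu_{K,0},\tilde\nu_{L,0})$). The tensor product is then zero, so $f_{\nu_K,\nu_L}\circ I^\#(X_0(K\#L),\nu_{K\#L}')=0$ for the opposite-parity decoration $\nu_{K\#L}'$, and provided the connecting cobordism map $f_{\nu_K,\nu_L}$ is injective on the image of $I^\#(S^3)$ in the relevant eigenspace (which should follow from the K\"unneth identification $I^\#(S^3_0(K)\#S^3_0(L))\cong I^\#(S^3_0(K))\otimes I^\#(S^3_0(L))$ for the appropriate bundle), this forces $I^\#(X_0(K\#L),\nu_{K\#L}')=0$, completing the argument. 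The main obstacle is verifying this injectivity of $f_{\nu_K,\nu_L}$ on the image; if that direct route falls short, a fallback is to use the quasi-morphism bound $|\cinvt(K\#L)|\leq|\cinvt(K)|+|\cinvt(L)|+1=1$ together with Theorem~\ref{thm:main-nu-odd} to restrict to $\cinvt(K\#L)\in\{-1,0,1\}$, and then rule out $\pm 1$ by combining the nonvanishing established in the first step with Proposition~\ref{prop:cinvt-count-y_i} and Lemma~\ref{lem:conjugation-y_i}.
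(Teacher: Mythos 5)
Your first half — choosing the decorations $\nu_K$ and $\nu_L$ to make $I^\#(X_0(K),\nu_K)$ and $I^\#(X_0(L),\nu_L)$ injective, then invoking Lemma~\ref{lem:zero-surgery-sum} with the parity bookkeeping to get injectivity of $I^\#(X_0(K\#L),\nu_{K\#L})$ with the expected decoration — is exactly the paper's argument.

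The second half has two gaps, and as written neither of the two routes you offer closes them. In the main route, the equality $I^\#(X_0(K),\nu_K)\otimes I^\#(X_0(L),\nu_L) = f\circ I^\#(X_0(K\#L),\nu_{K\#L})$ with one factor on the left equal to zero only tells you that $f\circ I^\#(X_0(K\#L),\nu_{K\#L}') = 0$; the map $f$ here factors through the cobordism map $I^\#(S^3_0(K\#L))\to I^\#(S^3_0(K)\#S^3_0(L))$, which is a genuine 2-handle cobordism map between manifolds of different total dimension, and there is no a priori reason for it to be injective on anything. You flag this yourself, and it is in fact a genuine obstacle, not a technicality. In the fallback route, the quasi-morphism bound together with Theorem~\ref{thm:main-nu-odd} does restrict $\cinvt(K\#L)\in\{-1,0,1\}$, and the injectivity from step one does rule out one sign (in the WW case $\cinvt(K\#L)=-1$ would force $N(K\#L)=0$, i.e., $I^\#(X_0(K\#L),\nu_0)=0$). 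But it does not rule out the other sign: when $\cinvt(K\#L)=1$ both $I^\#(X_0(K\#L),\nu_0)$ and $I^\#(X_0(K\#L),\tilde\nu_0)$ are injective, so the nonvanishing from step one is consistent with $\cinvt(K\#L)=1$, and Proposition~\ref{prop:cinvt-count-y_i} together with Lemma~\ref{lem:conjugation-y_i} only tells you that the one nonzero $y_i$ is $y_0$ — again no contradiction.

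The paper closes this by a mirror argument you do not mention: since $\mirror{K}$ and $\mirror{L}$ again satisfy $\cinvt=0$ and have the same shapes as $K$ and $L$ (because $S^3_0(\mirror J)\cong -S^3_0(J)$ and $\dim I^\#$ is orientation-independent), the same injectivity step applied to $\mirror K\#\mirror L$ rules out the remaining sign, giving $-\cinvt(K\#L)=\cinvt(\mirror K\#\mirror L)\geq 0$ and hence $\cinvt(K\#L)=0$. If you insert this observation, your fallback route works and you do not need to establish injectivity of $f$ at all.
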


\begin{proof}
Choose $\nu_K \in \{\nu_{K,0},\tilde\nu_{K,0}\}$ and $\nu_L \in \{\nu_{L,0},\tilde\nu_{L,0}\}$ so that the maps
\[ I^\#(X_0(K),\nu_K) \quad\text{and}\quad I^\#(X_0(L),\nu_L) \]
are both injective rather than zero.  Then Lemma~\ref{lem:zero-surgery-sum} says that $I^\#(X_0(K\#L),\nu_{K\#L})$ is injective as well, where
\[ \nu_{K\#L} = \begin{cases} \nu_{K\#L,0}, & K\text{ and }L\text{ have the same shape} \\ \tilde\nu_{K\#L,0}, & \text{otherwise}. \end{cases} \]

If $\nu_{K\#L} = \nu_{K\#L,0}$, then the exact triangle \eqref{eq:d+1-1} now says that
\[ \dim I^\#(S^3_0(K\#L)) = \dim I^\#(S^3_1(K\#L)) + 1. \]
This implies that either $\cinvt(K\#L)=0$ and $K\#L$ is W-shaped, or $\cinvt(K\#L) > 0$.  We apply the same argument to the mirrors $\mirror{K}$ and $\mirror{L}$, which have the same shapes as $K$ and $L$, to see that $-\cinvt(K\#L) = \cinvt(\mirror{K}\#\mirror{L}) \geq 0$.  So $\cinvt(K\#L) = 0$ and $K\#L$ is W-shaped after all.

Similarly, if $\nu_{K\#L} = \tilde\nu_{K\#L,0}$, then the exact triangle \eqref{eq:d+1-3} says that
\[ \dim I^\#(S^3_0(K\#L),\mu) = \dim I^\#(S^3_1(K\#L)) + 1, \]
hence by Corollary~\ref{cor:nu-0-twisted}, we have
\[ \dim I^\#(S^3_0(K\#L)) = \dim I^\#(S^3_1(K\#L)) - 1. \]
Thus either $\cinvt(K\#L) = 0$ and $K\#L$ is V-shaped, or $\cinvt(K\#L) < 0$.  Again we repeat this argument with $\mirror{K}$ and $\mirror{L}$ to conclude that $\cinvt(K\#L) = 0$ and $K\#L$ is V-shaped.
\end{proof}

Proposition~\ref{prop:ker-nu-group} shows that the subset
\[ \ker \cinvt = \{ [K] \in \cC \mid \cinvt(K)=0 \} \]
of the smooth concordance group is in fact a subgroup of $\cC$, and that the shape of a knot in $\ker\cinvt$ defines a homomorphism
\begin{align*}
\text{shape}: \ker \cinvt &\to \Z/2\Z \\
K &\mapsto \begin{cases} 0, & K\text{ is W-shaped} \\ 1, & K\text{ is V-shaped}. \end{cases}
\end{align*}
Thus the subgroup $\cC_W := \ker(\text{shape}) \subset \ker \cinvt$, consisting of smooth concordance classes of W-shaped knots, is either an index-2 subgroup of $\ker \cinvt$ or all of $\ker\cinvt$, depending on whether or not V-shaped knots with $\cinvt=0$ exist.  Combining this with Lemma~\ref{lem:tau-nonzero}, which implies that if $\chominvt(K) \neq 0$ then $\cinvt(K) \neq 0$, we have an ascending chain of subgroups
\[ \cC_W \subset \ker \cinvt \subset \ker \chominvt \subset \cC. \]
We do not know if either of the first two inclusions are proper.

\begin{remark}
It follows from work of Hom \cite{hom-epsilon} that the analogous inclusion
\[ \ker \hat\nu \subset \ker \tau \]
in Heegaard Floer homology is in fact proper.  Let $K$ be any knot of genus $g\geq 1$ satisfying $\tau(K) = g$, such as a positive torus knot, and let $K'$ denote its $(1-4g,2)$-cable (where $2$ is the longitudinal winding).  Then by \cite[Proposition~3.6(4)]{hom-epsilon} the epsilon invariant of $K$ satisfies $\epsilon(K) = 1$, so we can apply \cite[Theorems 1~and~2]{hom-epsilon} to see that
\[ \tau(K') = 2\tau(K) + \tfrac{1}{2}(2-1)((1-4g)-1) = 0 \]
and $\epsilon(K')=1$.  This guarantees that $K' \in \ker\tau$ but $K' \not\in \ker\hat\nu$ as follows: according to \cite[Remark~3.5]{hom-epsilon} we have
\[ \epsilon(K') = \big(\tau(K')-\nu(K')\big) - \big(\tau(\mirror{K'})-\nu(\mirror{K'})\big), \]
where $\nu$ is the invariant of \cite[Definition~9.1]{osz-rational}, so $\nu(\mirror{K'})- \nu(K') = 1$.  But from \cite[Equation~(34)]{osz-rational} we have $\nu(K') = \tau(K') \text{ or } \tau(K')+1$, and likewise for $\mirror{K'}$, so since $\tau(K')=\tau(\mirror{K'})=0$ the only possibility is
\begin{align*}
\nu(K') &= 0, &
\nu(\mirror{K'}) &= 1.
\end{align*}
Thus \cite[Lemma~10.4]{bs-concordance} says that $\hat\nu(K')=-1$, so $K' \not\in \ker\hat\nu$ as claimed.
\end{remark}

Finally, we show that taking connected sums with knots in $\ker\cinvt$ does not change the value of $\cinvt$.

\begin{proposition} \label{prop:add-nu-zero}
Let $K$ and $L$ be knots in $S^3$.  If $\cinvt(K)=0$, then $\cinvt(K\#L) = \cinvt(L)$.
\end{proposition}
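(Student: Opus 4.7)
The plan is to combine three ingredients already in hand: the quasi-morphism inequality \eqref{eq:quasi-morphism} for $\cinvt$, the parity statement Theorem~\ref{thm:nu-odd}, and the fact from Proposition~\ref{prop:ker-nu-group} that $\ker\cinvt$ is a subgroup of $\cC$. Since $\cinvt(K)=0$, applying the quasi-morphism bound with $L_1=K$ and $L_2=L$ immediately gives
\[ \cinvt(K\#L) \in \{\cinvt(L)-1,\ \cinvt(L),\ \cinvt(L)+1\}. \]
The goal is then to exclude the two extreme possibilities.

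First I would dispose of the case $\cinvt(L)=0$: here $K,L\in\ker\cinvt$, and Proposition~\ref{prop:ker-nu-group} says $\ker\cinvt$ is closed under connected sum, so $\cinvt(K\#L)=0=\cinvt(L)$. Next suppose $\cinvt(L)\neq 0$. Then Theorem~\ref{thm:nu-odd} forces $\cinvt(L)$ to be odd, so both $\cinvt(L)-1$ and $\cinvt(L)+1$ are even. Applying Theorem~\ref{thm:nu-odd} again to $K\#L$, its invariant must be $0$ or odd, so if $\cinvt(K\#L)$ equals one of the two even neighbors $\cinvt(L)\pm 1$ then we must have $\cinvt(K\#L)=0$ and correspondingly $\cinvt(L)=\pm 1$.

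The main obstacle is ruling out these two remaining subcases, i.e.\ $\cinvt(L)=\pm 1$ with $\cinvt(K\#L)=0$. For this I would use the group structure: $\cinvt(K)=0$ and $\cinvt(K\#L)=0$ mean that both $K$ and $K\#L$ represent classes in $\ker\cinvt$, which is a subgroup of $\cC$ by Proposition~\ref{prop:ker-nu-group}. Therefore the class of $L$, which equals $[K\#L]-[K]$ in $\cC$, also lies in $\ker\cinvt$, forcing $\cinvt(L)=0$ and contradicting $\cinvt(L)=\pm 1$. Hence the only remaining possibility is $\cinvt(K\#L)=\cinvt(L)$, completing the proof. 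The final sentence of the proposition, that $\ker\cinvt$ is a subgroup of $\cC$, is already contained in Proposition~\ref{prop:ker-nu-group} and requires nothing further.
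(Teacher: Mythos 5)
Your proposal is correct and follows essentially the same route as the paper: quasi-morphism bound, parity (Theorem~\ref{thm:nu-odd}) to reduce to the case $\cinvt(K\#L)=0$ and $\cinvt(L)=\pm1$, and then the subgroup structure of $\ker\cinvt$ from Proposition~\ref{prop:ker-nu-group} to derive a contradiction. The paper phrases the last step concretely, applying Proposition~\ref{prop:ker-nu-group} to $K\#L$ and $\mirror{K}$ and then using that $(K\#L)\#\mirror{K}$ is concordant to $L$; your invocation of $[L]=[K\#L]-[K]\in\ker\cinvt$ is the same argument.
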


\begin{proof}
The case $\cinvt(L)=0$ is Proposition~\ref{prop:ker-nu-group}, so we may assume that $\cinvt(L) \neq 0$, and in particular that $\cinvt(L)$ is odd by Theorem~\ref{thm:nu-odd}.

The relation \eqref{eq:quasi-morphism}, asserting that $\cinvt$ is a quasi-morphism, simplifies here to
\[ \left| \cinvt(K\#L) - \cinvt(L) \right| \leq 1. \]
If $\cinvt(K\#L) \neq \cinvt(L)$ then it follows that $\cinvt(K\#L) = \cinvt(L)\pm1$ is even, so by Theorem~\ref{thm:nu-odd} it must be zero.  In this case we have
\[ \cinvt(K\#L) = \cinvt(K) = 0 \]
and $\cinvt(L) = \mp 1$.  Then $\cinvt(\mirror{K}) = -\cinvt(K) = 0$, so Proposition~\ref{prop:ker-nu-group} says that
\[ \cinvt((K\#L)\#\mirror{K}) = 0. \]
But $K\#L\#\mirror{K}$ is smoothly concordant to $L$, so $\cinvt(L) = 0$ as well and this is a contradiction.  It must therefore be true that $\cinvt(K\#L) = \cinvt(L)$ after all.
\end{proof}

Inspired by Hom's epsilon invariant \cite{hom-epsilon}, we can now define another concordance invariant by the formula
\[ \cepsilon(K) = 2\chominvt(K) - \cinvt(K). \]
It is clearly a concordance invariant, because $\chominvt$ and $\cinvt$ are, and it satisfies many of the same properties of $\epsilon$ which are listed after \cite[Corollary~3]{hom-epsilon}.

\begin{proposition} \label{prop:epsilon}
We have $\cepsilon(K) \in \{-1,0,1\}$ for all $K\subset S^3$.  It satisfies the following properties:
\begin{enumerate}
\item $\cepsilon(K) = 0$ if and only if $\cinvt(K)=0$. \label{i:ce-1}
\item If $\cepsilon(K)=0$ then $\chominvt(K) = 0$. \label{i:ce-2}
\item If $K$ is slice, then $\cepsilon(K) = 0$. \label{i:ce-3}
\item $\cepsilon(\mirror{K}) = -\cepsilon(K)$. \label{i:ce-4}
\item If $|\chominvt(K)| = g_s(K) > 0$, then $\cepsilon(K) = \mathrm{sgn}(\chominvt(K))$. \label{i:ce-5}
\item If $\cepsilon(K)=0$, then $\cepsilon(K\#K') = \cepsilon(K')$. \label{i:ce-6}
\item If $\cepsilon(K)=\cepsilon(K')$, then $\cepsilon(K\#K') = \cepsilon(K)=\cepsilon(K')$. \label{i:ce-7}
\end{enumerate}
\end{proposition}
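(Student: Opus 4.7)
The plan is to derive all seven properties from the definition $\cepsilon(K) = 2\chominvt(K) - \cinvt(K)$ using three ingredients: the quasi-morphism inequality \eqref{eq:quasi-morphism} for $\cinvt$, the integrality of $\chominvt$ from \cite{glw}, and the parity constraint from Theorem~\ref{thm:nu-odd}. First I would establish $\cepsilon(K) \in \{-1,0,1\}$: iterating \eqref{eq:quasi-morphism} yields $|\cinvt(\#^n K) - n\cinvt(K)| \leq n-1$, so dividing by $n$ and letting $n\to\infty$ gives $|2\chominvt(K) - \cinvt(K)| \leq 1$, and the fact that both $\chominvt(K)$ and $\cinvt(K)$ are integers then forces $\cepsilon(K) \in \{-1,0,1\}$.

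For property (1), if $\cinvt(K)=0$ then Proposition~\ref{prop:ker-nu-group} inductively gives $\cinvt(\#^n K)=0$ for all $n$, whence $\chominvt(K)=0$ and $\cepsilon(K)=0$; conversely, $\cepsilon(K)=0$ means $\cinvt(K)=2\chominvt(K)$ is even, so Theorem~\ref{thm:nu-odd} forces $\cinvt(K)=0$. Property (2) is immediate from $\chominvt(K) = \tfrac12(\cepsilon(K) + \cinvt(K))$. Property (3) holds because slice knots have $\cinvt(K)=0$, so (1) applies. Property (4) follows from $\cinvt(\mirror K)=-\cinvt(K)$ and the fact that $\chominvt$ is a homomorphism. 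For property (5), assume without loss of generality that $\chominvt(K)=g_s(K)>0$; then Lemma~\ref{lem:tau-nonzero} gives $\cinvt(K)>0$, Proposition~\ref{prop:max-y_j-bound} gives $\cinvt(K)\leq 2g_s(K)-1$, and the general bound $2\chominvt(K)-\cinvt(K)\leq 1$ gives the reverse inequality, forcing $\cinvt(K)=2g_s(K)-1$ and $\cepsilon(K)=1$. For property (6), $\cepsilon(K)=0$ gives $\cinvt(K)=0$ by (1), so Proposition~\ref{prop:add-nu-zero} yields $\cinvt(K\#K')=\cinvt(K')$, and additivity of $\chominvt$ combined with (2) gives $\chominvt(K\#K')=\chominvt(K')$.

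The main obstacle is property (7), which I expect to be the most delicate case because it requires combining the quasi-morphism bound with the parity constraint to pin down $\cinvt(K\#K')$ uniquely. By (6) one may assume $\cepsilon(K)=\cepsilon(K')\in\{\pm1\}$, and by (4) one may take both to equal $+1$; then $\cinvt(K)$ and $\cinvt(K')$ are positive odd integers with $2\chominvt(K)=\cinvt(K)+1$ and $2\chominvt(K')=\cinvt(K')+1$, so additivity of $\chominvt$ gives $2\chominvt(K\#K')=\cinvt(K)+\cinvt(K')+2$. The quasi-morphism inequality constrains $\cinvt(K\#K')=\cinvt(K)+\cinvt(K')+j$ for some $j\in\{-1,0,1\}$. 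Since this sum is at least $2$, $\cinvt(K\#K')$ is positive and hence odd by Theorem~\ref{thm:nu-odd}, ruling out $j=0$. Computing $\cepsilon(K\#K') = 2-j$, and recalling $\cepsilon \in \{-1,0,1\}$, then forces $j=1$, giving $\cepsilon(K\#K')=1$ as required.
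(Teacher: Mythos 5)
Your proof is essentially correct and follows the same route as the paper's: properties (1)--(6) are handled as the paper does (with minor variations — you re-derive $|2\chominvt(K)-\cinvt(K)|\leq 1$ from the quasi-morphism inequality rather than citing it, use Proposition~\ref{prop:ker-nu-group} instead of the parity trick for one direction of (1), and invoke Proposition~\ref{prop:max-y_j-bound} in place of the slice-genus bound from \cite{bs-concordance} in (5)), and (7) hinges on additivity of $\chominvt$, the quasi-morphism bound, and $\cepsilon\in\{-1,0,1\}$.

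There is one flaw in your argument for (7), though it turns out not to affect the conclusion. You assert that $\cepsilon(K)=\cepsilon(K')=1$ forces $\cinvt(K)$ and $\cinvt(K')$ to be \emph{positive} odd integers, and use this to argue that $\cinvt(K)+\cinvt(K')+j\geq 2$ and hence rule out $j=0$. But $\cepsilon(K)=1$ does not imply $\cinvt(K)>0$: a priori one could have $\chominvt(K)=0$ and $\cinvt(K)=-1$, which is consistent with everything established (indeed, the paper explicitly leaves open whether such knots exist). So the intermediate claim is unjustified, and the ``sum is at least $2$'' step can fail. Fortunately, that step is redundant: from $2\chominvt(K\#K')=\cinvt(K)+\cinvt(K')+2$ and $\cinvt(K\#K')=\cinvt(K)+\cinvt(K')+j$ you get $\cepsilon(K\#K')=2-j$ directly, and since $j\in\{-1,0,1\}$ while $\cepsilon(K\#K')\in\{-1,0,1\}$, this alone forces $j=1$ without any positivity input. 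This is exactly what the paper does (phrased as $\cepsilon(K\#K')-\cepsilon(K)-\cepsilon(K')\in\{-1,0,1\}$, so $\cepsilon(K\#K')\geq 1$). I would cut the positivity claim and the parity detour in (7) and go straight to the $2-j$ computation.
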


\begin{proof}
By \cite[Proposition~5.4]{bs-concordance}, we have an inequality
\[ \left| 2\chominvt(K) - \cinvt(K) \right| \leq 1 \]
which implies that $|\cepsilon(K)| \leq 1$, and Ghosh--Li--Wang \cite{glw} proved that it is an integer, so it must be $-1$, $0$, or $1$.  Then $\cepsilon(K)=0$ if and only if $\cinvt(K)$ is even, which by Theorem~\ref{thm:nu-odd} is equivalent to $\cinvt(K)=0$, establishing \eqref{i:ce-1}.  This in turn implies \eqref{i:ce-2} by Lemma~\ref{lem:tau-nonzero}.  Properties \eqref{i:ce-3} and \eqref{i:ce-4} follow from the same properties for $\chominvt$ and $\cinvt$.

For \eqref{i:ce-5}, since $g_s(K) > 0$ we have $|\cinvt(K)| \leq 2g_s(K)-1$ by \cite[Theorem~3.7]{bs-concordance}. If $\chominvt(K) = g_s(K)$ is positive then
\[ \cepsilon(K) = 2\chominvt(K) - \cinvt(K) \geq 2g_s(K) - (2g_s(K)-1) = 1, \]
so in fact $\cepsilon(K) = 1$, and the case $\chominvt(K) = -g_s(K)$ is similar. 

For \eqref{i:ce-6}, if $\cepsilon(K)=0$ then we know by \eqref{i:ce-1} and \eqref{i:ce-2} that $\cinvt(K)=\chominvt(K)=0$, so $\cinvt(K\#K')=\cinvt(K')$ and $\chominvt(K\#K')=\chominvt(K')$ by Proposition~\ref{prop:add-nu-zero} and the additivity of $\chominvt$ respectively, hence
\[ \cepsilon(K\#K') = 2\chominvt(K\#K') - \cinvt(K\#K') = 2\chominvt(K')-\cinvt(K') = \cepsilon(K'). \]

For \eqref{i:ce-7}, we may assume that $\cepsilon(K)=\cepsilon(K')=\pm1$ since otherwise this is a special case of \eqref{i:ce-6}.  By taking mirrors throughout and applying \eqref{i:ce-4} as needed, we may further assume that $\cepsilon(K)=\cepsilon(K)=1$, and we wish to show that $\cepsilon(K\#K')=1$ as well.  We take
\[ 2\chominvt(K\#K') = 2\chominvt(K) + 2\chominvt(K') \]
and rearrange this to get
\[ \cepsilon(K\#K') - \cepsilon(K) - \cepsilon(K') = -\left( \cinvt(K\#K') - \cinvt(K) - \cinvt(K') \right), \]
where the right side is $-1$, $0$, or $1$ by \eqref{eq:quasi-morphism}, and thus $\cepsilon(K\#K') - 2 \geq -1$, or $\cepsilon(K\#K') \geq 1$.  But this is only possible if $\cepsilon(K\#K')=1$, as desired.
\end{proof}

Noting that $\ker(\cepsilon) = \ker(\cinvt)$ by Proposition~\ref{prop:epsilon}, and the latter is a subgroup of $\cC$ by Proposition~\ref{prop:ker-nu-group}, we can define a total ordering on the quotient
\[ \cC / \ker(\cinvt) = \cC / \ker(\cepsilon) \]
by setting 
\[ \llbracket K \rrbracket \geq \llbracket K' \rrbracket \ \Longleftrightarrow\ \cepsilon(K\#\mirror{K'}) \geq 0. \]
(We use $\llbracket K \rrbracket$ to denote the equivalence class in $\cC/\ker(\cepsilon)$ of the smooth concordance class $[K] \in \cC$ of a knot $K$.)
This is well-defined and satisfies the axioms of a total ordering by the properties in Proposition~\ref{prop:epsilon}; we omit the proof and refer instead to \cite[Proposition~4.1]{hom-ordering}.

It is possible that $\ker(\cinvt) = \ker(\chominvt)$, and then $\chominvt: \cC \to \R$ has image $\Z$ by \cite{glw}, so a total ordering on $\cC/\ker(\chominvt) \cong \Z$ would not be very interesting.  But we expect that this is not the case, namely that there are knots $K$ with $\chominvt(K)=0$ but $\cinvt(K) = \pm1$, and then $\cC / \ker(\cinvt)$ may be a much more complicated group.

\section{Applications to homology cobordism} \label{sec:homology-cobordism}

Let $\Theta^3_\Z$ denote the group of integral homology 3-spheres modulo integral homology cobordism.  In recent work, Nozaki, Sato, and Taniguchi proved the following.

\begin{theorem}[{\cite[Theorem~1.8]{nozaki-sato-taniguchi}}] \label{thm:nst}
Let $K$ be a knot in $S^3$.  If $h(S^3_1(K)) < 0$, where $h$ denotes the Fr{\o}yshov invariant \cite[\S8]{froyshov}, then the homology spheres
\[ S^3_{1/n}(K), \quad n \geq 1 \]
are linearly independent in $\Theta^3_\Z$.
\end{theorem}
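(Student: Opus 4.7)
The plan is to refine the Frøyshov invariant $h$ into a family of real-valued homology cobordism invariants, and to use them to separate the elements $[S^3_{1/n}(K)]$. The natural framework is filtered instanton Floer homology: for each integer homology sphere $Y$ and each $s > 0$, one defines $r_s(Y) \in (0,\infty]$ as the smallest positive value of the Chern--Simons functional at which a distinguished class in the action-filtered $SU(2)$ instanton Floer complex of $Y$ acquires a nontrivial differential (after normalizing the overall lift of the Chern--Simons period). These invariants refine $h$ in the sense that $r_s(Y) < \infty$ for some (equivalently, all sufficiently small) $s > 0$ precisely when $h(Y) < 0$, so the hypothesis $h(S^3_1(K)) < 0$ gives us usable invariants on $S^3_1(K)$ to compare against.

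Next I would exploit the handle-calculus cobordism between $S^3_{1/n}(K)$ and $S^3_1(K)$ obtained by $n-1$ Rolfsen twists, realized as a chain of negatively framed 2-handles attached to meridians: this is a smooth negative-definite cobordism whose intersection form depends explicitly on $n$. The standard inequality for filtered Floer invariants under negative-definite cobordisms then yields a bound of the form $r_s(S^3_{1/n}(K)) \leq \Phi_n\big(r_s(S^3_1(K))\big)$, with a scaling function $\Phi_n$ determined by the intersection form and by the energies of the instantons on the cobordism. The crucial feature is that these scalings $\Phi_n$ distinguish different values of $n$, giving, for each $n$, a qualitatively different asymptotic profile of $s \mapsto r_s(S^3_{1/n}(K))$.

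To conclude linear independence, I would assume for contradiction a nontrivial relation $\sum_i a_i [S^3_{1/n_i}(K)] = 0$ in $\Theta^3_\Z$ with $0 < n_1 < \cdots < n_k$ and $a_k \neq 0$. Passing to the connected sum $Y = \#_i \, a_i \cdot (\pm S^3_{1/n_i}(K))$ yields a homology sphere that is integer homology cobordant to $S^3$, so $r_s(Y) = r_s(S^3) = \infty$ for every $s$. On the other hand, a connected-sum inequality for filtered Floer invariants, together with the scaling analysis of the previous step, allows one to choose a filtration level $s$ at which the dominant contribution comes from the $n_k$-term, forcing $r_s(Y)$ to be finite. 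This contradicts $r_s(Y)=\infty$.

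The main obstacle is making the scaling $\Phi_n$ quantitatively precise: one must describe the Chern--Simons action spectrum of $S^3_{1/n}(K)$ in terms of the $SU(2)$ representation variety of the knot group subject to the surgery relation $\rho(\mu)\rho(\lambda)^n = 1$, and show that distinct values of $n$ produce spectral profiles distinguishable by the filtered invariants $r_s$. A secondary difficulty is the connected-sum step, which requires a neck-stretching argument that correctly accounts for how the Chern--Simons action redistributes across the separating sphere, and a careful treatment of the behavior of $r_s$ under orientation reversal so that the terms with $a_i < 0$ can be absorbed into the analysis.
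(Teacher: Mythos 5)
This theorem is cited in the paper, with attribution to Nozaki, Sato, and Taniguchi; the paper does not give its own proof, so there is no internal argument to compare against. That said, your sketch correctly identifies the framework that NST actually use: they define a family $r_s$ of real-valued homology cobordism invariants from the Chern--Simons filtration on instanton Floer chains, establish that these are finite exactly when the Fr{\o}yshov invariant is negative, and prove inequalities under negative-definite cobordisms and under connected sums.

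Where I would push back is on the emphasis. The negative-definite cobordism inequality relating $S^3_{1/m}(K)$ and $S^3_{1/n}(K)$ gives a monotone comparison of $r_s$-values, not an $n$-dependent ``scaling'' $\Phi_n$ that already distinguishes the surgeries; by itself it cannot separate the classes. The real engine of NST's argument is a direct estimate on the Chern--Simons spectrum: the positive Chern--Simons values of flat $SU(2)$ connections on $S^3_{1/n}(K)$ all lie in an interval that shrinks to zero as $n\to\infty$, which forces the relevant $r_s$-value of $S^3_{1/n}(K)$ to tend to zero. Given a hypothetical relation $\sum_i a_i [S^3_{1/n_i}(K)] = 0$ with $a_k\neq 0$ and $n_k$ maximal, one isolates the $n_k$-term via this decay and contradicts $r_s(S^3)=\infty$ using the connected-sum inequality. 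Your ``main obstacle'' paragraph gestures at this but frames the spectral computation as a technical wrinkle rather than as the central input on which everything hinges. The orientation-reversal issue you flag as secondary is also substantive: $r_s$ is far from symmetric under orientation reversal, so terms with $a_i<0$ cannot be absorbed by naive mirroring and require a separate argument controlling the invariants of the reversed-orientation surgeries, which NST handle through the Fr{\o}yshov and cobordism inequalities applied to both orientations.
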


In this section we will relate $h(S^3_1(K))$ to the invariant $\cinvt(K)$ as follows.

\begin{proposition} \label{prop:h-1-surgery}
If $\cinvt(K) > 0$, then $h(S^3_1(K)) < 0$.
\end{proposition}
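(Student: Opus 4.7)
The plan is to extract from $\cinvt(K) > 0$ a nontrivial cobordism-induced class in $I^\#(S^3_1(K))$ arising from a specific summand of the positive-definite trace cobordism $X_1(K)\colon S^3 \to S^3_1(K)$, and then to apply a Frøyshov-style grading inequality to deduce $h(S^3_1(K)) < 0$.

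First, I would unpack the hypothesis. By Proposition~\ref{prop:N-mirror} the condition $\cinvt(K) > 0$ is equivalent to $N(K) = \cinvt(K) \geq 1$, so the map $I^\#(X_0, \nu_0)$ is nonzero; combined with Proposition~\ref{prop:y0-nonzero} this gives $y_0 \neq 0$ in $I^\#(S^3_0(K),\mu)$. Applying Lemma~\ref{lem:image-zni} then produces a nonzero composite
\[ \tilde{G}_1(y_0) = \pm \tfrac{\epsilon_0}{2}\bigl(z_{1, 0} + z_{1, 1}\bigr) \in I^\#(S^3_1(K)), \]
and Lemma~\ref{lem:X-W-composition} identifies this with the images of $\bfone$ under the summands $I^\#(X_1,\nu_1; s_{1, 0})$ and $I^\#(X_1,\nu_1; s_{1, 1})$ of the trace-cobordism map, each labeled by the specific homomorphism $s_{1, i}\colon H_2(X_1;\Z) \to \Z$ from \eqref{eq:ts-homomorphisms}.

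Next, I would invoke the grading-theoretic characterization of the Frøyshov invariant adapted to the framed instanton setting, in the spirit of \cite{froyshov,scaduto}: for an integer homology sphere $Y$, the condition $h(Y) < 0$ is detected by the existence of a nonzero class in $I^\#(Y)$ arising from a positive-definite cobordism $S^3 \to Y$ and sitting in a specific low absolute grading relative to the reducible. Applied to our nonzero class---produced by the positive-definite trace $X_1$ (with $b_2^+(X_1) = 1$)---an index computation for the ASD moduli space on $X_1$ decorated by the eigenbundle associated to $s_{1, 0}$ pins down its grading and thereby forces $h(S^3_1(K)) < 0$. An intermediate step may be to replace $X_1$ by the factorization $X_0 \cup W_1 \cong X_1 \# \cptwo$ from \eqref{eq:triangle-blowup} so that the blow-up formula of \cite[Theorem~1.16]{bs-lspace} converts the grading calculation into one on $X_0 \cup W_1$, where the negative-definite piece $W_1$ contributes the required downward shift.

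The main obstacle will be reconciling the Frøyshov invariant---originally defined via classical $SU(2)$-instanton Floer homology and a canonical grading normalization relative to the reducible connection---with the framed $SO(3)$-instanton cobordism machinery used throughout the paper. In particular, one must align the $\Z/4\Z$-grading lift used to define $h$ on $I^\#(S^3_1(K))$ with the index shift picked up by the summand $I^\#(X_1,\nu_1; s_{1, 0})$ of the trace-cobordism map. Once this comparison is in place, the required grading inequality is a direct computation using the blow-up formula together with the adjunction-type input provided by Lemma~\ref{lem:X-W-composition}.
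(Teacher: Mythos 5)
Your approach is genuinely different from the paper's, but it has a gap that you yourself flag, and also a smaller error in an intermediate step.

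The paper's proof never touches the decomposition $\{y_i\}$, the blow-up formula, or any grading/index computation. Instead it works entirely at the level of dimensions: Floer's surgery exact triangle in ordinary $SU(2)$-instanton homology gives a surjection $F: I(S^3_0(K))_\mu \twoheadrightarrow \hat{I}(S^3_1(K))$ whose kernel has dimension $2|h(S^3_1(K))|$; Fr{\o}yshov's theorem (negative-definite fillings) gives $h(S^3_1(K)) \le 0$; and if $h=0$ then $F$ would be an isomorphism of $\Q[u]$-modules, which via Fukaya's connected sum formula (Theorem~\ref{thm:fukaya-sum-general}) would force $\dim I^\#(S^3_0(K),\mu) + 1 = \dim I^\#(S^3_1(K))$, contradicting $\cinvt(K)>0$ and Theorem~\ref{thm:dim-surgery}. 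In fact, the paper explicitly says that nothing in that section needs Theorem~\ref{thm:main-nu-odd} except Lemma~\ref{lem:tau-nonzero}, so Proposition~\ref{prop:y0-nonzero} plays no role there.

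Two concrete problems with your proposal. First, the intermediate claim that $\tilde{G}_1(y_0)$ is nonzero fails when $\cinvt(K)=1$: in that case Proposition~\ref{prop:cinvt-count-y_i} says $y_0$ is the \emph{only} nonzero $y_i$, so $y_0 = \sum_i y_i = I^\#(X_0,\tilde\nu_0)(\bfone)$ spans $\ker(\tilde{G}_1)$ and $\tilde{G}_1(y_0) = 0$. (One can of course use the nonvanishing of $I^\#(X_1,\nu_1)(\bfone)$ directly, since $N(K)\ge 1$, but that's a different route than the one you wrote.) Second, and more fundamentally, the ``Fr{\o}yshov-style grading inequality for a positive-definite cobordism in the $I^\#$ setting'' that your argument hinges on is not available off the shelf. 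The invariant $h$ is defined via $\hat{I}$ and the $u$-map in the classical $SU(2)$ theory, not via cobordism maps on $I^\#$; translating a nonvanishing statement about an $I^\#$-cobordism map in a specified eigenspace into a strict inequality on $h$ would require essentially rebuilding the bridge that Fukaya's formula and Froyshov's exact-triangle argument provide for free. You acknowledge this as ``the main obstacle,'' but it is really the entire content of the proposition; without that reconciliation worked out, the argument does not close. The paper's dimension-counting route sidesteps all of it.
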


Postponing the proof of Proposition~\ref{prop:h-1-surgery} for now, we recall once again that $\chominvt$ is the homogenization
\[ \chominvt(K) = \frac{1}{2} \lim_{n\to\infty} \frac{\cinvt(\#^n K)}{n} \]
of $\cinvt(K)$, which defines a real-valued homomorphism on the smooth concordance group.  Then Proposition~\ref{prop:h-1-surgery} has the following corollary. 

\begin{theorem} \label{thm:homology-cobordism}
Let $K \subset S^3$ be a knot satisfying $\chominvt(K) > 0$, or more generally $\cinvt(K) > 0$.  Then the homology spheres
\[ S^3_{1/n}(K), \quad n \geq 1 \]
are linearly independent in $\Theta^3_\Z$.
\end{theorem}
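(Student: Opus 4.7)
The plan is to reduce the statement to a direct application of the Nozaki--Sato--Taniguchi result (Theorem~\ref{thm:nst}) by way of Proposition~\ref{prop:h-1-surgery} and Lemma~\ref{lem:tau-nonzero}. The two hypotheses stated in the theorem, namely $\chominvt(K) > 0$ and $\cinvt(K) > 0$, are not independent: the first implies the second by Lemma~\ref{lem:tau-nonzero}, which was established using that $\cinvt$ is a quasi-morphism on $\cC$ together with the parity statement $\cinvt \in 2\Z + 1 \cup \{0\}$ from Theorem~\ref{thm:nu-odd}. So the first step is simply to observe that we may assume $\cinvt(K) > 0$ throughout.

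Once we have $\cinvt(K) > 0$, Proposition~\ref{prop:h-1-surgery} yields $h(S^3_1(K)) < 0$, and Theorem~\ref{thm:nst} then delivers the desired linear independence of $\{S^3_{1/n}(K)\}_{n \geq 1}$ in $\Theta^3_\Z$. So the proof of Theorem~\ref{thm:homology-cobordism} itself is essentially a one-line chain of implications, with no additional obstacle beyond assembling the pieces.

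The actual content, of course, lies in Proposition~\ref{prop:h-1-surgery}, whose proof I would not attempt to rewrite here since it is stated as a black box from the point of view of Theorem~\ref{thm:homology-cobordism}. I would expect that proposition to be the main obstacle in the overall narrative, proved by relating the framed instanton cobordism maps $I^\#(X_1, \nu_1)$ on $S^3_1(K)$ to the Frøyshov invariant via the comparison between $I^\#$ and monopole (or instanton) Floer homology, with $\cinvt(K) > 0$ providing the nonvanishing of a map that forces a Frøyshov-type quantity to be strictly negative. For the theorem at hand, however, all of that is packaged into the cited proposition, so the write-up is truly just: apply Lemma~\ref{lem:tau-nonzero}, then Proposition~\ref{prop:h-1-surgery}, then Theorem~\ref{thm:nst}.
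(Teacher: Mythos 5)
Your proof is correct and matches the paper's argument exactly: both reduce the theorem to the chain Lemma~\ref{lem:tau-nonzero} $\Rightarrow$ Proposition~\ref{prop:h-1-surgery} $\Rightarrow$ Theorem~\ref{thm:nst}, with Proposition~\ref{prop:h-1-surgery} treated as the cited input where the real work lives. Your speculative aside about how that proposition is proved is not needed for this theorem and differs a bit from the paper (which uses Floer's exact triangle and Scaduto's connected-sum formula rather than a comparison with monopole Floer homology), but since you explicitly blackbox it, the write-up stands.
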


\begin{proof}
Lemma~\ref{lem:tau-nonzero} asserts that if $\chominvt(K) > 0$ then $\cinvt(K) > 0$ as well.  In the latter case Proposition~\ref{prop:h-1-surgery} says that $h(S^3_1(K)) < 0$, so we apply Theorem~\ref{thm:nst}.
\end{proof}

The reason for the emphasis on $\chominvt(K)$ here is that as a concordance homomorphism, it is somewhat better behaved than $\cinvt(K)$, and in particular it is often easier to compute.  Nothing in this section will require Theorem~\ref{thm:main-nu-odd}, except for the implication
\[ \chominvt(K) > 0 \ \Longrightarrow\ \cinvt(K) > 0 \]
in Lemma~\ref{lem:tau-nonzero}.

We now prove Proposition~\ref{prop:h-1-surgery}.  In order to do so, we introduce several other versions of instanton homology:
\begin{itemize}
\item If $Y$ is a homology sphere, then $I(Y)$ is Floer's original instanton homology \cite{floer-instanton}, and $\hat{I}(Y)$ is Fr{\o}yshov's reduced instanton homology \cite{froyshov}.  Both of these are $\Z/8\Z$ graded, and they are mod 4 periodic with respect to this grading \cite[Corollary~3]{froyshov}.   We will follow the grading conventions of \cite{saveliev,scaduto}, which differ from those of \cite{floer-instanton}; then Fr{\o}yshov's $h$ invariant is defined in \cite[\S8]{froyshov} as
\[ h(Y) = -\frac{1}{2}\left(\chi(I(Y)) - \chi(\hat{I}(Y))\right). \]

\item If $Y$ is a homology $S^1\times S^2$ and $\mu$ generates $H_1(Y)$, then $I(Y)_\mu$ is the instanton homology associated to a principal $SO(3)$ bundle $P\to Y$ with $w_2(P) = \mathit{PD}(\mu)$, as in \cite{floer-surgery,braam-donaldson}.  It has a relative $\Z/8\Z$ grading which reduces to an absolute $\Z/2\Z$ grading.
\end{itemize}

The following is a special case of Fukaya's connected sum formula for instanton homology, as described by Scaduto \cite[Theorem~1.3]{scaduto}.  All coefficients are taken in $\Q$.

\begin{theorem}[\cite{scaduto}] \label{thm:fukaya-sum-general}
Let $Y$ be an integer homology sphere.  Then
\[ I^\#(Y) \cong H_*(\pt) \oplus \left(H_*(S^3) \otimes \ker(u^2-64)\right) \]
as absolutely $\Z/4\Z$-graded modules, where $u^2-64$ acts on $\bigoplus_{j=0}^3 \hat{I}_j(Y)$.

If instead $Y$ is a homology $S^1\times S^2$ and $\mu$ generates $H_1(Y;\Z)$, then
\[ I^\#(Y,\mu) \otimes H_*(S^4) \cong \ker(u^2-64) \otimes H_*(S^3) \]
as relatively $\Z/4\Z$-graded modules, where $u^2-64$ acts on $I(Y)_\mu$.
\end{theorem}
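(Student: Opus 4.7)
The plan is to deduce both isomorphisms from Scaduto's connected sum formula for instanton homology, using the definitional reformulation
\[ I^\#(Y,\lambda) = I(Y \# T^3, \lambda \cup \omega), \]
where $\omega \subset T^3$ is a standard curve realizing an admissible non-trivial $SO(3)$-bundle. Scaduto's Theorem~1.3 computes $I(Y_1 \# Y_2, \omega_1 \cup \omega_2)$ as a tensor-product-like combination of $I(Y_1,\omega_1)$ and $I(Y_2,\omega_2)$ viewed as modules over the graded ring $R = \Q[u]/(u^2 - 64)$, where $u = \mu(\pt)$ is the degree $-4$ point operator. The relation $u^2 = 64$ reflects the fact that on the reducible connection over the neck $S^3$, $u$ acts with eigenvalues $\pm 8$, and hence $\ker(u^2-64)$ isolates precisely the part of each factor that glues nontrivially across the connect-sum neck.

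For Case~1, $I(Y)$ splits off a direct summand generated by the trivial flat connection, and this summand produces the $H_*(\pt)$ factor in $I^\#(Y)$, essentially by pairing trivially with the distinguished generator in $I(T^3,\omega)$. The complementary piece is the reduced instanton homology $\hat{I}(Y)$, which inherits an $R$-module structure. Applying the connected sum formula to $\hat{I}(Y)$ and the known computation of $I(T^3,\omega)$ (via Mu\~noz or a direct Morse--Bott analysis) yields precisely $H_*(S^3) \otimes \ker(u^2 - 64)$, with the $H_*(S^3)$ coming from the structure of $I(T^3,\omega)$ and $\ker(u^2-64)$ isolating the surviving $R$-module structure on $\hat{I}(Y)$. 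In this case $[\omega \cup \lambda]=0$ in $Y\#T^3$, so the $\Z/4\Z$-grading is absolute.

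Case~2 proceeds in parallel but with two differences. First, because $\mu$ represents a non-trivial bundle, there are no reducible flat connections on $Y$, so the $H_*(\pt)$ summand is absent. Second, only the relative $\Z/4\Z$-grading is available. The connected sum formula applied to $I(Y)_\mu$ and $I(T^3,\omega)$ produces $\ker(u^2-64) \otimes H_*(S^3)$; the extra factor of $H_*(S^4)$ on the left-hand side arises from a dimension-doubling in the framing construction used to define $I^\#$ with a non-trivial bundle class (this is one of Scaduto's normalization choices, and the $H_*(S^4)$-tensor is the cleanest way to identify the two sides equivariantly).

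The main obstacle is the grading and normalization bookkeeping: identifying the $R$-module structures correctly on each factor, tracking how absolute versus relative $\Z/4\Z$-gradings transfer through the connect-sum formula, and confirming that the $H_*(S^3)$ and $H_*(S^4)$ factors appear in the expected places after accounting for Scaduto's conventions. All of this is already handled in Scaduto's paper, so in practice the proof amounts to a direct application of his Theorem~1.3 to the two admissible pairs at hand, together with the computation of $I(T^3,\omega)$.
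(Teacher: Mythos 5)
The paper does not prove this statement at all; it is attributed directly to Scaduto (the text immediately preceding reads ``The following is a special case of Fukaya's connected sum formula for instanton homology, as described by Scaduto~[Theorem~1.3],'' and the theorem is tagged with the citation). So there is no proof in the paper to compare against, only a pointer to the source. Your proposal correctly identifies that pointer and the right starting point: use the definitional identity $I^\#(Y,\lambda) = I(Y\#T^3,\lambda\cup\omega)$ and feed it into Scaduto's Fukaya connected sum theorem. In that sense you are taking the same route the paper takes, just making it explicit.

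That said, a couple of the details you supply are not quite right, and if you were turning this sketch into a real proof they would need fixing. First, the claim that the $H_*(S^4)$ factor in the homology $S^1\times S^2$ case ``arises from a dimension-doubling in the framing construction used to define $I^\#$ with a non-trivial bundle class'' is off base: the framing construction (connect-sum with $(T^3,\omega)$) is identical in the two cases, and does not depend on whether the bundle class $[\lambda]$ on $Y$ is trivial. The $H_*(S^4)$ factor is an artifact of the connected sum theorem itself when only a \emph{relative} $\Z/4\Z$ grading is available and the degree-$(-4)$ operator $u$ acts with eigenvalues $\pm 8$; tensoring one side with $H_*(S^4)\cong\Q_0\oplus\Q_4$ is the grading-unambiguous form of the statement, not a property of the framing. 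Second, your description glosses the role of the restriction to $\bigoplus_{j=0}^3\hat{I}_j(Y)$: $\hat{I}(Y)$ is $\Z/8$-graded and mod-4 periodic, and passing to gradings $0,\dots,3$ is a genuine halving, not merely a bookkeeping choice. Neither of these is fatal to your overall plan, but as written the $H_*(S^4)$ explanation is wrong and should not be repeated as if it were a fact about the construction.
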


\begin{proof}[Proof of Proposition~\ref{prop:h-1-surgery}]
Floer's exact triangle \cite{floer-surgery,braam-donaldson} relates the instanton homologies of $S^3_0(K)$ and $S^3_1(K)$ as follows: there is an exact triangle between the unreduced homologies
\begin{equation} \label{eq:floer-triangle}
\dots \to I(S^3) \to I(S^3_0(K))_\mu \to I(S^3_1(K)) \to \dots,
\end{equation}
which remains exact at the homology spheres after we pass to reduced instanton homology, as explained in the proof of \cite[Theorem~10]{froyshov}.  Since $I(S^3) = \hat{I}(S^3) = 0$, this means that the resulting
\[ F: I(S^3_0(K))_\mu \twoheadrightarrow \hat{I}(S^3_1(K)) \]
is surjective, and its kernel has dimension
\begin{align*}
\dim_\Q(I(S^3_0(K))_\mu) - \dim_\Q \hat{I}(S^3_1(K)) &= \dim_\Q I(S^3_1(K)) - \dim_\Q \hat{I}(S^3_1(K)) \\
&= 2|h(S^3_1(K))|
\end{align*}
over $\Q$.  Here the first equality uses the fact that $I(S^3_0(K))_\mu \cong I(S^3_1(K))$ by the exact triangle \eqref{eq:floer-triangle}, while the second equality follows from the discussion in \cite[\S8]{froyshov}.

Since $-S^3_1(K) \cong S^3_{-1}(\mirror{K})$ bounds the trace of $(-1)$-surgery on $\mirror{K}$, which has negative definite intersection form, it follows from \cite[Theorem~3]{froyshov} that $h(-S^3_1(K)) \geq 0$, so $h(S^3_1(K)) \leq 0$ and thus $\dim_\Q \ker(F) = -2h(S^3_1(K))$.  If $h(S^3_1(K)) = 0$, then $F$ is also injective, so it's an isomorphism of $\Q[u]$-modules.  It thus restricts to an isomorphism
\[ \ker\left(u^2-64: \vphantom{\hat{I}}I(S^3_0(K))_\mu \to I(S^3_0(K))_\mu\right) \cong \ker\left(u^2-64: \hat{I}(S^3_1(K)) \to \hat{I}(S^3_1(K))\right), \]
so we conclude from Theorem~\ref{thm:fukaya-sum-general} that
\[ \dim I^\#(S^3_0(K),\mu) + 1 = \dim I^\#(S^3_1(K)). \]
But if $\cinvt(K) > 0$ then this contradicts Theorem~\ref{thm:dim-surgery}, so $\cinvt(K) > 0$ must imply that $h(S^3_1(K)) < 0$.
\end{proof}

\begin{remark} \label{rem:V-shaped-h}
If $\cinvt(K)=0$ and $K$ is V-shaped then (appealing to Corollary~\ref{cor:nu-0-twisted}) we get the same contradiction as in the proof of Proposition~\ref{prop:h-1-surgery}, so that $h(S^3_1(K)) < 0$ for such knots as well.  But then the same argument applies to the mirror $\mirror{K}$, so $-h(S^3_{-1}(K)) = h(S^3_1(\mirror{K})) < 0$.  This shows that if $\cinvt(K)=0$ and $K$ is V-shaped, then both $h(S^3_{-1}(K)) > 0$ and $h(S^3_1(K)) < 0$ must hold.
\end{remark}

We conclude this section with another application of Proposition~\ref{prop:h-1-surgery}.  Recall that a knot $K \subset S^3$ is \emph{rationally slice} if it bounds a smoothly embedded disk in some rational homology ball.

\begin{proposition} \label{prop:rationally-slice}
If $K \subset S^3$ is rationally slice, then $\cinvt(K) = \chominvt(K) = 0$ and $K$ is W-shaped.
\end{proposition}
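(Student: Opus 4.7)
The plan is to combine Proposition~\ref{prop:h-1-surgery}---which asserts that $\cinvt(K) > 0$ implies $h(S^3_1(K)) < 0$---with an explicit construction showing that for rationally slice $K$, $S^3_1(K)$ bounds a rational homology 4-ball, so that $h(S^3_1(K)) = 0$.

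To carry this out, let $D \subset B$ be a smooth slice disk for $K$ in a rational homology 4-ball $B$.  I would first attach a $(+1)$-framed 2-handle $H$ to $B$ along $K \subset \partial B = S^3$, obtaining a smooth 4-manifold $\tilde{W} := B \cup H$ with $\partial \tilde{W} = S^3_1(K)$, whose second rational homology is generated by the smoothly embedded sphere $\hat{D} := D \cup (\text{core of } H)$; this sphere has self-intersection $+1$ (the framing, since $D$ has trivial self-intersection in $B$), so its tubular neighborhood is diffeomorphic to $\CP^2 \setminus D^4$.  I would then smoothly blow $\hat{D}$ down---remove $\nu(\hat{D})$ and reglue a 4-ball along the resulting $S^3$---to obtain a smooth 4-manifold $B_1$ with $\partial B_1 = S^3_1(K)$.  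A direct Mayer--Vietoris computation using the decomposition $\tilde{W} = (\tilde{W} \setminus \nu(\hat{D})) \cup \nu(\hat{D})$ and then $B_1 = (\tilde{W} \setminus \nu(\hat{D})) \cup D^4$ verifies that $B_1$ is a rational homology 4-ball.

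With this in place, since $B_1$ has $b_1 = 0$ and rank-zero intersection form, Frøyshov's inequality (Theorem~3 of \cite{froyshov}), applied to $B_1$ and to $-B_1$ in turn and combined with $h(-Y) = -h(Y)$, gives $h(S^3_1(K)) = 0$.  Were $\cinvt(K) > 0$, Proposition~\ref{prop:h-1-surgery} would force $h(S^3_1(K)) < 0$, a contradiction; applying the same argument to the rationally slice knot $\mirror{K}$ rules out $\cinvt(K) < 0$.  Hence $\cinvt(K) = 0$, and the contrapositive of Lemma~\ref{lem:tau-nonzero} yields $\chominvt(K) = 0$.  Finally, Remark~\ref{rem:V-shaped-h}---which says a V-shaped knot with $\cinvt = 0$ satisfies $h(S^3_1(K)) < 0$---together with $h(S^3_1(K)) = 0$ forces $K$ to be W-shaped.

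The main obstacle is the geometric content of the first step: checking that $\hat{D}$ has a standard $\CP^2 \setminus D^4$ tubular neighborhood (which follows smoothly from $D$ being smooth and the $+1$ framing) so that the blow-down is genuinely smooth, and carrying out the MV computation to confirm that $B_1$ is a rational homology ball.  Once these are verified, the remainder of the argument is a clean appeal to Proposition~\ref{prop:h-1-surgery}, Lemma~\ref{lem:tau-nonzero}, and Remark~\ref{rem:V-shaped-h}.
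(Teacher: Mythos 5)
Your proposal is correct and follows essentially the same route as the paper: both deduce $h(S^3_1(K)) \geq 0$ by applying Fr{\o}yshov's theorem to a rational homology ball bounded by $S^3_1(K)$, then invoke Proposition~\ref{prop:h-1-surgery} together with the mirror argument to get $\cinvt(K)=0$, followed by Lemma~\ref{lem:tau-nonzero} for $\chominvt(K)=0$ and Remark~\ref{rem:V-shaped-h} for W-shapedness. The only difference is that the paper simply asserts the standard fact that $S^3_1(K)$ bounds such a ball, whereas you supply the handle-attachment and blow-down construction (which is fine, though you implicitly use that the framing induced by $D$ on $K$ is the Seifert framing---true even for rational homology balls, since the pushoff of $K$ along $D$ is nullhomologous in $B \setminus D$ while $[\mu]$ generates a free summand of $H_1(B \setminus D)$).
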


\begin{proof}
If $K$ is rationally slice then $S^3_1(K)$ bounds some smooth rational homology ball $X$, and since $X$ has negative definite (in fact, trivial) intersection form, the Fr{\o}yshov invariant of its boundary satisfies $h(S^3_1(K)) \geq 0$ by \cite[Theorem~3]{froyshov}.  Proposition~\ref{prop:h-1-surgery} then says that we must have $\cinvt(K) \leq 0$.  But the mirror $\mirror{K}$ is also rationally slice, so the same argument says that $-
\cinvt(K) = \cinvt(\mirror{K}) \geq 0$ and we conclude that $\cinvt(K) = 0$.  Lemma~\ref{lem:tau-nonzero} implies that $\chominvt(K)=0$ as well.

The assertion that $K$ is W-shaped now follows from Remark~\ref{rem:V-shaped-h}: if it were V-shaped then we would have $h(S^3_1(K)) < 0$, but we have just argued that $h(S^3_1(K)) \geq 0$ instead.
\end{proof}

\section{Knots with small $r_0(K)$} \label{sec:almost-knots}

As mentioned in the introduction, the invariants $\cinvt(K)$ and $r_0(K)$ satisfy \[r_0(K) \geq |\cinvt(K)|\textrm{ and }r_0(K) \equiv \cinvt(K) \pmod{2},\] essentially by definition \cite[Definition~3.6]{bs-concordance}.    For nontrivial knots, we have $r_0(K) = \cinvt(K)$ if and only if $K$ is an \emph{instanton L-space knot}, meaning some positive rational surgery on $K$ is an instanton L-space, in which case \cite[Theorem~1.15]{bs-lspace} says that $K$ is fibered and strongly quasipositive and that
\[ r_0(K) = \cinvt(K) = 2g(K)-1. \]
In this section, we study knots $K\subset S^3$  with $r_0(K)$ small that just barely fail to be instanton L-space knots, by which we mean that \[r_0(K) - \cinvt(K) = 2.\] In doing so, we classify knots with $r_0(K)\leq 2$, proving Theorem \ref{thm:main-r0-small},  a key input in the bound on surgery slopes in Theorem \ref{thm:q-bound}, as explained in the introduction.

We continue to use the same notation as in Section~\ref{sec:zero-surgery-maps}, and in particular we define elements
\[ y_i \in I^\#_\godd(S^3_0(K),\mu;t_i) \]
exactly as in \eqref{eq:y_i}.  Throughout this section we will repeatedly use the following fact.

\begin{lemma}[{\cite[Theorem~1.17]{bs-lspace}}] \label{lem:fiber-detection}
If $K \subset S^3$ is a nontrivial knot of genus $g \geq 1$, then
\[ \dim I^\#_\godd(S^3_0(K),\mu;t_{g-1}) \geq 1 \]
with equality if and only if $K$ is fibered.
\end{lemma}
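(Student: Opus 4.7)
The plan is to reduce this statement to well-established results about knot instanton homology $\KHI(K)$ by matching the eigenspace decomposition on $I^\#(S^3_0(K),\mu)$ with the Alexander grading decomposition on $\KHI(K)$.

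First, I would invoke a version of Kronheimer--Mrowka's excision theorem to obtain a grading-preserving isomorphism
\[ I^\#(S^3_0(K),\mu) \cong \KHI(K), \]
under which $\tfrac{1}{2}\mu([\hat\Sigma])$ on the left corresponds to the Alexander grading operator on the right. This identifies each generalized $2i$-eigenspace with the Alexander-grading-$i$ summand:
\[ I^\#(S^3_0(K),\mu; t_i) \cong \KHI(K, i). \]
A parity check (using that the $y_i$ were defined to live in the odd grading in \eqref{eq:y_i}, and that $\chi(I^\#(S^3_0(K),\mu))=0$ by \eqref{eq:chi} since $b_1(S^3_0(K))=1$) shows that each such summand sits entirely in the odd $\Z/2\Z$-grading, giving
\[ I^\#_\godd(S^3_0(K),\mu; t_{g-1}) \cong \KHI(K, g-1). \]

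Second, I would invoke the nonvanishing theorem $\KHI(K, g-1)\neq 0$ for any nontrivial knot $K$ of genus $g$. This is the instanton analog of Ozsv\'ath--Szab\'o's genus detection theorem, established by Kronheimer--Mrowka via a taut foliation on the knot complement combined with Eliashberg--Thurston's perturbation of foliations into contact structures and the naturality of $\KHI$ under contact structures. This immediately yields the lower bound $\dim \geq 1$.

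Third, I would apply the fiberedness detection theorem in the instanton setting: $\dim \KHI(K, g-1)=1$ if and only if $K$ is fibered. The direction ``fibered implies dimension one'' follows from the contact class of the open book supporting the fibration, which generates $\KHI(K, g-1)$; the converse, due to Ghosh--Li, transports Ni's Heegaard Floer argument to the instanton setting via sutured manifold decompositions. The main technical obstacle is the first step: carefully verifying that the excision isomorphism intertwines the $\mu([\hat\Sigma])$-eigenspace decomposition with the Alexander grading (including the factor of $2$ and the parity bookkeeping that places the whole summand in the odd grading). Once that identification is established, the two cited results on $\KHI$ finish the proof.
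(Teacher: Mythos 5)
This lemma is stated as a citation of \cite[Theorem~1.17]{bs-lspace}; the present paper gives no proof of it, so there is nothing in-paper to compare against.

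As a reconstruction of the argument in \cite{bs-lspace}, your strategy is the right one in outline: identify the relevant eigenspace of $I^\#(S^3_0(K),\mu)$ with the top Alexander grading of $\KHI(K)$, and then invoke the Kronheimer--Mrowka genus and fiberedness detection theorems for $\KHI$. However, there is a substantive indexing error. You posit a correspondence $I^\#(S^3_0(K),\mu;t_i)\cong\KHI(K,i)$ and hence read off $\KHI(K,g-1)$, but this cannot be right as stated: adjunction (as recorded right before this lemma, via \cite[Corollary~7.6]{km-excision}) forces $I^\#(S^3_0(K),\mu;t_i)=0$ for $|i|>g-1$, whereas $\KHI(K,j)$ vanishes only for $|j|>g$ and is nonzero at $j=g$. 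So the two gradings are offset by one, and the top eigenspace $t_{g-1}$ must be matched with $\KHI(K,g)$, not $\KHI(K,g-1)$. With your indexing, the genus-detection and fiberedness-detection theorems would be applied to $\KHI(K,g-1)$, where they say nothing, so the argument would not close.

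Two further, smaller concerns. First, the global grading-preserving isomorphism $I^\#(S^3_0(K),\mu)\cong\KHI(K)$ is more than Kronheimer--Mrowka's excision theorem gives you directly; what the proof actually uses, and all that is needed, is the identification at the extremal eigenvalue, $I^\#_\godd(S^3_0(K),\mu;t_{g-1})\cong\KHI(K,g)$. Second, the parity step --- that this eigenspace is concentrated in the odd $\Z/2\Z$ grading --- does not follow merely from $\chi(I^\#(S^3_0(K),\mu))=0$ and the fact that the classes $y_i$ are odd: a vanishing total Euler characteristic does not force each $t_i$-eigenspace to have vanishing Euler characteristic, and the $y_i$ being odd says nothing about the rest of the eigenspace. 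That step needs its own argument (e.g., from the structure of the excision isomorphism or a direct $\Z/2\Z$-grading computation), and in the cited reference it is handled as part of establishing the top-grading identification.
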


We begin with the following proposition:

\begin{proposition} \label{prop:02-13}
Let $K \subset S^3$ be a knot with $r_0(K)-\cinvt(K)=2$ and $\cinvt(K) \leq 1$.  Then $K$ has Seifert genus 1.  Moreover, $\cinvt(K) \geq -1$ with equality if and only if $K$ is the left-handed trefoil.
\end{proposition}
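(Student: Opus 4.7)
The plan is to first reduce to a small set of possible values of $\cinvt(K)$, then count dimensions in the eigenspace decomposition of $I^\#(S^3_0(K),\mu)$ to force $g(K)=1$. The hypothesis $r_0(K) = \cinvt(K)+2$ combined with $r_0(K)\geq |\cinvt(K)|$ immediately gives $\cinvt(K)\geq -1$, and together with $\cinvt(K)\leq 1$ and the parity result of Theorem~\ref{thm:nu-odd} this leaves only the three cases $\cinvt(K)\in\{-1,0,1\}$. In particular the inequality $\cinvt(K)\geq -1$ asserted in the proposition is already established.

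For $\cinvt(K)=-1$, the hypothesis becomes $r_0(K)=1=|\cinvt(K)|$, so $\mirror K$ satisfies $\cinvt(\mirror K) = r_0(\mirror K) = 1$ and is therefore a nontrivial instanton L-space knot. Then \cite[Theorem~1.15]{bs-lspace} forces $\mirror K$ to be a fibered, strongly quasipositive knot with $r_0(\mirror K) = 2g(\mirror K)-1 = 1$, hence of genus $1$. The only genus-one fibered strongly quasipositive knot is the right-handed trefoil, so $K$ is the left-handed trefoil, yielding both $g(K)=1$ and the ``equality'' part of the proposition.

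It remains to show $g(K)=1$ when $\cinvt(K)\in\{0,1\}$. The main tool is the $\Z/2\Z$-graded Euler characteristic of $I^\#(S^3_0(K),\mu)$: since $b_1(S^3_0(K))=1$ we have $\chi(I^\#(S^3_0(K),\mu))=0$ by \eqref{eq:chi}, so $\dim_\godd I^\#(S^3_0(K),\mu) = \tfrac12\dim I^\#(S^3_0(K),\mu)$. The plan is to identify three odd-graded eigenspace contributions that together exceed this half-dimension whenever $g(K)\geq 2$. By Lemma~\ref{lem:fiber-detection}, $\dim I^\#_\godd(S^3_0(K),\mu;t_{g-1})\geq 1$; by the conjugation symmetry of Theorem~\ref{thm:conjugation} (using that each $c_k$ preserves the $\Z/2\Z$-grading and sends the $t_i$-eigenspace to the $t_{-i}$-eigenspace), $\dim I^\#_\godd(t_{1-g})\geq 1$ as well; and Proposition~\ref{prop:y0-nonzero} produces a nonzero element $y_0\in I^\#_\godd(t_0)$ whenever $\cinvt(K)>0$, or $\cinvt(K)=0$ and $I^\#(X_0,\tilde\nu_0)$ is injective.

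The case analysis is then routine. For $\cinvt(K)=1$, $r_0(K)=3$, we have $\dim I^\#(S^3_0(K),\mu)=4$ and $\dim_\godd=2$, so the contributions at the three distinct eigenspaces $t_0,t_{g-1},t_{1-g}$ sum to at least $3>2$ when $g\geq 2$, a contradiction. For $\cinvt(K)=0$, $r_0(K)=2$, Corollary~\ref{cor:nu-0-twisted} separates the analysis into the V-shape case ($\dim I^\#(S^3_0(K),\mu)=4$, with $I^\#(X_0,\tilde\nu_0)$ injective so that Proposition~\ref{prop:y0-nonzero} still applies) and the W-shape case ($\dim I^\#(S^3_0(K),\mu)=2$). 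The V-shape case runs identically to $\cinvt=1$; in the W-shape case $\dim_\godd=1$, and the two fiber-detection contributions at $t_{g-1}$ and $t_{1-g}$ alone already sum to at least $2$ when $g\geq 2$, again a contradiction. The main organizational point to keep straight is which of $I^\#(X_0,\nu_0)$ and $I^\#(X_0,\tilde\nu_0)$ is injective in the V- versus W-shape subcases, so that Proposition~\ref{prop:y0-nonzero} is applied to the correct map; beyond that, no step is technically involved once Theorem~\ref{thm:conjugation}, Lemma~\ref{lem:fiber-detection}, and Proposition~\ref{prop:y0-nonzero} are in hand.
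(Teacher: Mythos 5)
Your proof is correct and uses essentially the same ingredients as the paper's: Lemma~\ref{lem:fiber-detection}, the conjugation symmetry identifying the $t_{g-1}$ and $t_{1-g}$ eigenspaces, the Euler characteristic constraint $\chi(I^\#(S^3_0(K),\mu))=0$, and Proposition~\ref{prop:y0-nonzero}. The only noteworthy difference is organizational: you compute $\dim I^\#(S^3_0(K),\mu)$ explicitly in each of the cases $\cinvt(K)=1$, $\cinvt(K)=0$ V-shaped, and $\cinvt(K)=0$ W-shaped and derive a dimension-count contradiction in each, whereas the paper handles $\cinvt(K)\in\{0,1\}$ uniformly by pairing the eigenspace lower bound $\dim I^\#(S^3_0(K),\mu)\geq 4$ (valid whenever $g\geq 2$) against the exact-triangle upper bound $\dim I^\#(S^3_0(K),\mu)\leq 4$, then observing that equality forces both $y_0=0$ (via the $\chi=0$ count) and $I^\#(X_0,\tilde\nu_0)$ injective, at which point Proposition~\ref{prop:y0-nonzero} gives the contradiction. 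Your version is a bit longer but has the small virtue that the W-shape sub-case of $\cinvt(K)=0$ is dispatched without invoking Proposition~\ref{prop:y0-nonzero} at all. One small point of attribution: the isomorphism $I^\#_\godd(t_{g-1})\cong I^\#_\godd(t_{1-g})$ already follows from the grading-preserving conjugation maps in \eqref{eq:conjugation-y}--\eqref{eq:conjugation-c_i}; you don't need the full cobordism-level statement of Theorem~\ref{thm:conjugation} for that step (though you do need Theorem~\ref{thm:conjugation} downstream, inside Proposition~\ref{prop:y0-nonzero}).
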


\begin{proof}
We note that since $r_0(K) > 0$ for all nontrivial $K$, we must have
\[ \cinvt(K) = r_0(K)-2 \geq -1. \] Furthermore, we have that \[ (\cinvt(K), r_0(K)) = (-1,1) \text{ or } (0,2) \text{ or } (1,3). \]
If $(\cinvt(K),r_0(K)) = (-1,1)$ then $\mirror{K}$ is a genus-1 instanton L-space knot, so $K$ must be the left-handed trefoil \cite[Corollary~7.13]{bs-lspace}.  We may therefore assume from now on that $(\cinvt(K),r_0(K))$ is either $(0,2)$ or $(1,3)$.

Let $g$ be the genus of $K$, and suppose that $g \geq 2$.  Then the homomorphisms $t_{1-g}$, $t_0$, and $t_{1-g}$ are distinct. Moreover, the $t_{g-1}$- and $t_{1-g}$-eigenspaces of $I^\#(S^3_0(K),\mu)$ are isomorphic as $\Z/2\Z$-graded modules by any of the conjugation symmetries of \eqref{eq:conjugation-y}, so by Lemma~\ref{lem:fiber-detection} we must have
\[ \dim I^\#_\godd(S^3_0(K),\mu;t_{1-g}) + \dim I^\#_\godd(S^3_0(K),\mu;t_{g-1}) \geq 2. \]
We use \eqref{eq:chi} to compute that
\[ \dim I^\#(S^3_0(K),\mu) = 2\dim I^\#_\godd(S^3_0(K),\mu) \geq 4, \]
and if equality holds then $I^\#_\godd(S^3_0(K),\mu;t_0) = 0$, hence the element $y_0$ is zero.

Now Theorem~\ref{thm:dim-surgery} tells us that $\dim I^\#(S^3_1(K)) = 3$, so by the exactness of \eqref{eq:surgery-triangle-0}, we must have
\[ \dim I^\#(S^3_0(K),\mu) \leq 4, \]
hence in fact equality holds and the map
\[ I^\#(X_0,\tilde\nu_0): I^\#(S^3) \to I^\#(S^3_0(K),\mu) \]
must be injective.  But since $\cinvt(K) \geq 0$ and $I^\#(X_0,\tilde\nu_0)$ is injective, Proposition~\ref{prop:y0-nonzero} tells us that $y_0$ is nonzero after all, and we have a contradiction.
\end{proof}

We would like to understand knots with $(\cinvt(K),r_0(K))$ equal to $(0,2)$ or $(1,3)$.  The only known examples are the figure eight and $\mirror{5_2}$, per \cite[Table~1]{bs-concordance}, and we expect that there are no others.  In order to find further restrictions on such knots, we will study their instanton homology in more detail.  In what follows we use the same notation as in Section~\ref{sec:homology-cobordism} for other versions of instanton homology, notably $I(Y)$ and $\hat{I}(Y)$ for an integral homology sphere and $I(Y)_\mu$ for a homology $S^1\times S^2$.

The connected sum formulas of Theorem~\ref{thm:fukaya-sum-general} simplify further for certain surgeries on knots, as described in \cite[Corollary~1.5]{scaduto} for $\pm1$-surgeries and \cite[\S9.8]{scaduto} for zero-surgeries.

\begin{theorem}[\cite{scaduto}] \label{thm:fukaya-sum}
Let $K \subset S^3$ be a knot of Seifert genus at most 2.  Then
\[ I^\#(S^3_{\pm1}(K)) \cong H_*(\pt) \oplus \left(H_*(S^3) \otimes \bigoplus_{j=0}^3 \hat{I}_j(S^3_{\pm1}(K))\right) \]
as absolutely $\Z/4\Z$-graded modules, and
\[ I^\#(S^3_0(K),\mu) \otimes H_*(S^4) \cong I(S^3_0(K))_{\mu} \otimes H_*(S^3) \]
as relatively $\Z/4\Z$-graded modules.
\end{theorem}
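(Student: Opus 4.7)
The plan is to reduce both statements to the general Fukaya formula in Theorem~\ref{thm:fukaya-sum-general} and then verify that the operator $u^2 - 64$ vanishes identically on the relevant instanton homology under the hypothesis $g(K) \leq 2$. For the $\pm 1$-surgery case, Theorem~\ref{thm:fukaya-sum-general} gives
\[ I^\#(S^3_{\pm 1}(K)) \cong H_*(\pt) \oplus \left(H_*(S^3) \otimes \ker(u^2-64)\right), \]
so the stated isomorphism is equivalent to showing that $u^2 - 64$ acts as zero on $\hat{I}(S^3_{\pm 1}(K)) = \bigoplus_{j=0}^3 \hat{I}_j(S^3_{\pm 1}(K))$.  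For the zero-surgery case, since $\dim H_*(S^4) = \dim H_*(S^3) = 2$, comparing dimensions in the general formula shows that the stated isomorphism is equivalent to $\ker(u^2 - 64) = I(S^3_0(K))_\mu$, i.e., to the same vanishing of $u^2 - 64$ on $I(S^3_0(K))_\mu$.

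Next, I would establish the vanishing of $u^2 - 64$ using eigenvalue bounds coming from a capped-off Seifert surface.  For zero-surgery, the capped-off Seifert surface $\hat\Sigma \subset S^3_0(K)$ has genus $g = g(K) \leq 2$, and the adjunction-style bounds proved for the $\mu$-map (as in the structure theorem from \cite[Theorem~5.1]{bs-lspace}) force the eigenvalues of $\mu(\hat\Sigma)$ on $I(S^3_0(K))_\mu$ to lie in $\{0, \pm 2\}$ when $g \leq 2$.  A parallel argument works for $\pm 1$-surgery using $\Sigma_{\pm 1} \subset X_{\pm 1}$, the capped-off Seifert surface of genus $g \leq 2$.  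The Floer-theoretic relationship between $\mu([\pt])$ and $\mu([\Sigma])$, encoded in Fukaya's sum formula and the companion relation used by Scaduto to derive Theorem~\ref{thm:fukaya-sum-general} in the first place, then translates these bounds on $\mu([\Sigma])$ into the polynomial identity $u^2 = 64$ on the appropriate piece of instanton homology.

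The main obstacle is verifying that the generalized eigenspace decomposition collapses, i.e., that $u^2 - 64$ vanishes on the nose rather than merely being nilpotent.  When $g(K) \geq 3$, eigenvalues of $\mu([\Sigma])$ can exceed $\pm 2$ in absolute value and one generally needs a higher-degree polynomial in $u^2$ to kill the action; moreover, nontrivial Jordan blocks can appear.  The genus hypothesis $g \leq 2$ cuts off both possibilities: it makes the eigenvalue spectrum of $\mu([\Sigma])$ simple enough to extract the clean relation $u^2 = 64$, and (since all $\mu$-operators have even degree and commute with the Floer differential) semisimplicity in that minimal-degree setting comes for free.  Since both reductions and the $u^2 = 64$ check are carried out in Scaduto's paper (\cite[Corollary~1.5]{scaduto} and \cite[\S9.8]{scaduto}), the end of the proof is essentially a citation, with the genus-2 bound entering precisely at the step where the relevant polynomial is shown to be $u^2 - 64$.
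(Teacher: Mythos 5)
The paper itself contains no proof of this statement: the theorem header is tagged \cite{scaduto}, and the preceding sentence says explicitly that the simplification of Theorem~\ref{thm:fukaya-sum-general} ``is described in \cite[Corollary~1.5]{scaduto} for $\pm1$-surgeries and \cite[\S9.8]{scaduto} for zero-surgeries.'' So you are not reproducing the paper's argument; you are attempting to reconstruct Scaduto's. Your opening reduction is correct and clean: given Theorem~\ref{thm:fukaya-sum-general}, both claimed isomorphisms are equivalent to the vanishing of $u^2-64$ on $\hat{I}(S^3_{\pm1}(K))$ and on $I(S^3_0(K))_\mu$ respectively.

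Where the sketch gets shaky is in how the genus hypothesis forces $u^2-64=0$, and especially in the $\pm1$-surgery case. For zero-surgery there is a closed surface $\hat\Sigma$ of genus $g(K)\le 2$ generating $H_2(S^3_0(K))$, so eigenvalue bounds on $\mu(\hat\Sigma)$ (eigenvalues in $\{0,\pm2\}$) are available, and one can then try to invoke the Kronheimer--Mrowka-type relations between $\mu(\hat\Sigma)$ and $\mu(\pt)$ to get $u^2=64$. But for $\pm1$-surgery, $S^3_{\pm1}(K)$ is a homology sphere with $H_2=0$, so there is no surface operator to apply this to; the surface $\Sigma_{\pm1}$ lives in the four-dimensional cobordism $X_{\pm1}$, not in the 3-manifold, and ``a parallel argument'' does not go through directly. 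Scaduto in fact deduces the $\pm1$-surgery statement from the zero-surgery case via the surgery exact triangle rather than from an adjunction bound inside $X_{\pm1}$. Likewise, ``semisimplicity comes for free'' from degree considerations is asserted but not argued: ruling out nontrivial Jordan blocks for $u$ is precisely the content of the genus hypothesis, and it deserves more than a phrase. Since you ultimately defer to \cite[Corollary~1.5, \S9.8]{scaduto} for both of these steps, your proposal is in effect the same citation the paper makes, prefaced by a sketch whose hardest steps (the surgery-triangle transfer from zero-surgery to $\pm1$-surgery, and the absence of nilpotency) remain unjustified.
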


Cobordism maps of the form $I^\#(X,\nu): I^\#(Y_1) \to I^\#(Y_2)$ are homogeneous with respect to the $\Z/4\Z$ grading: according to \cite[Proposition~7.1]{scaduto}, we have
\begin{equation} \label{eq:cobordism-degree}
\deg I^\#(X,\nu) \equiv -\frac{3}{2}(\chi(X)+\sigma(X)) + \frac{1}{2}(b_1(Y_2)-b_1(Y_1)) + 2[\nu]^2 \pmod{4}.
\end{equation}
This will be useful in proving the following proposition.

\begin{proposition} \label{prop:almost-cinvt-0}
Suppose that $(\cinvt(K),r_0(K)) = (0,2)$.  Then $K$ is the figure eight, and it satisfies
\begin{align*}
\dim I^\#(S^3_0(K),\mu) &= 2, & \dim I^\#(S^3_0(K)) &= 4.
\end{align*}
\end{proposition}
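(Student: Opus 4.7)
The plan is to use Proposition~\ref{prop:02-13} to reduce to $g(K)=1$, then use Corollary~\ref{cor:nu-0-twisted} to pin down the two possible configurations of dimensions of $I^\#$ of zero-surgery, and finally rule out the ``wrong'' configuration using the Fukaya sum formula from Theorem~\ref{thm:fukaya-sum} together with the Fr\o yshov invariant.

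First, Proposition~\ref{prop:02-13} applied to $(\cinvt(K),r_0(K))=(0,2)$ gives that $K$ has Seifert genus one, so the adjunction inequality collapses the eigenspace decompositions of both $I^\#(S^3_0(K))$ and $I^\#(S^3_0(K),\mu)$ onto the single $t_0$-eigenspace.  Corollary~\ref{cor:nu-0-twisted} then tells us that
\[ \{\dim I^\#(S^3_0(K)),\,\dim I^\#(S^3_0(K),\mu)\} = \{r_0(K),r_0(K)+2\} = \{2,4\}. \]
I will show that $\dim I^\#(S^3_0(K),\mu)=2$ and $\dim I^\#(S^3_0(K))=4$, i.e.\ that $K$ is W-shaped.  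Once this is established, identifying $K$ is immediate: from $b_1(S^3_0(K))=1$ we have $\chi(I^\#(S^3_0(K),\mu))=0$, so $\dim I^\#_\godd(S^3_0(K),\mu;t_0)=1$, and then Lemma~\ref{lem:fiber-detection} forces $K$ to be fibered.  The only fibered knots of genus one are the two trefoils and the figure eight, and the trefoils are instanton L-space knots with $\cinvt=\pm 1\neq 0$, so $K=4_1$ and the advertised dimensions follow.

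The main obstacle is therefore to rule out the V-shape case $\dim I^\#(S^3_0(K),\mu)=4$, $\dim I^\#(S^3_0(K))=2$.  Here I plan to invoke Theorem~\ref{thm:fukaya-sum}, which is applicable because $g(K)\leq 2$.  Its zero-surgery piece translates the assumption into $\dim I(S^3_0(K))_\mu = 4$, and then Floer's surgery exact triangle together with $I(S^3)=0$ forces $\dim I(S^3_1(K))=4$.  On the other hand Theorem~\ref{thm:dim-surgery} gives $\dim I^\#(S^3_1(K))=3$, and the $\pm 1$-surgery piece of Theorem~\ref{thm:fukaya-sum} then determines $\dim\hat I(S^3_1(K))$ exactly.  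The Fr\o yshov relation $\dim I(Y)-\dim\hat I(Y)=2|h(Y)|$ used in the proof of Proposition~\ref{prop:h-1-surgery}, combined with the sign constraint $h(S^3_1(K))<0$ coming from Remark~\ref{rem:V-shaped-h} (which applies because V-shaped knots with $\cinvt=0$ have $h(S^3_1(K))<0$), then pins $|h(S^3_1(K))|$ to an inadmissible value and yields the contradiction.

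The delicate part of this last step will be to keep careful track of the grading conventions in Theorem~\ref{thm:fukaya-sum} (in particular the distinction between the absolute $\Z/4\Z$-grading used for $\pm 1$-surgery and the relative one used for zero-surgery) so as to confirm that the predicted value of $|h(S^3_1(K))|$ really is forbidden; if necessary I would supplement the parity/integrality argument with the degree formula~\eqref{eq:cobordism-degree} applied to the cobordisms $X_{\pm 1}$ together with the conjugation symmetry of Theorem~\ref{thm:conjugation}, exactly as in the proof of Theorem~\ref{thm:nu-zero-surgery}.
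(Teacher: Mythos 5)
Your opening steps are on target and match the paper exactly: Proposition~\ref{prop:02-13} gives $g(K)=1$, Corollary~\ref{cor:nu-0-twisted} gives $\{\dim I^\#(S^3_0(K)), \dim I^\#(S^3_0(K),\mu)\} = \{2,4\}$, and the argument that W-shape forces $K$ to be fibered (via Lemma~\ref{lem:fiber-detection}) and hence the figure eight is exactly right. The problem is in the step that is supposed to rule out the V-shape case, and it is a real gap, not a matter of bookkeeping.

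Run the numbers you set up: V-shape gives $\dim I^\#(S^3_0(K),\mu)=4$, so by the zero-surgery part of Theorem~\ref{thm:fukaya-sum} we get $\dim I(S^3_0(K))_\mu=4$, and Floer's triangle gives $\dim I(S^3_1(K))=4$. Theorem~\ref{thm:dim-surgery} gives $\dim I^\#(S^3_1(K))=3$, so the $\pm1$-surgery part of Theorem~\ref{thm:fukaya-sum} (with $\bigoplus_{j=0}^3 \hat I_j$ being half of $\hat I$ by $4$-periodicity) gives $\dim\hat I(S^3_1(K))=2$. The Fr\o yshov relation then gives $|h(S^3_1(K))|=1$, and Remark~\ref{rem:V-shaped-h} gives the sign, so $h(S^3_1(K))=-1$. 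But $h(S^3_1(K))=-1$ is \emph{not} inadmissible for a genus-one knot: the Fr\o yshov bound (\cite[Lemma~9]{froyshov}, cited in the paper) only says $-1\le h(S^3_1(K))\le 0$, and $h=-1$ is attained (the paper itself observes, in Remark~\ref{rem:3-1-alexander}, that $h(S^3_1(K))=-1$ for the analogous $(\cinvt,r_0)=(1,3)$ case). Supplementing with the degree formula and conjugation symmetry as in Theorem~\ref{thm:nu-zero-surgery} only reproduces the V/W dichotomy you already have; it does not produce a contradiction either.

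The paper's actual route to a contradiction is substantially deeper and is the missing ingredient here. Having computed $h(S^3_1(K))=-1$, the paper uses the explicit $\Z/4\Z$-grading computations from \eqref{eq:cobordism-degree} (as recorded in \cite[Corollary~5.2]{lpcs}) and the two surgery triangles to pin down the \emph{graded} module $I(S^3_{-1}(K))\cong \Q_1\oplus\Q_3\oplus\Q_5\oplus\Q_7$, from which $\chi(I(S^3_{-1}(K)))=2\lambda(S^3_{-1}(K))=-\Delta_K''(1)$ forces $\Delta_K(t)=2t-3+2t^{-1}$. It then passes to the $(-1,2)$-cable $C$ of $K$: $\Delta_C(t)=\Delta_K(t^2)$ gives $\chi(I(S^3_{-1}(C)))=-16$, hence $\dim I(S^3_0(C))_\mu=\dim I(S^3_{-1}(C))\ge 16$, while Gordon's identification $S^3_{-1}(C)\cong S^3_{-1/4}(K)$ together with Theorem~\ref{thm:dim-surgery} and Theorem~\ref{thm:fukaya-sum} caps $\dim I(S^3_0(C))_\mu$ at $8$ or $10$. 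That is where the contradiction comes from, and no amount of dimension-counting on $\pm1$-surgery alone will substitute for it.
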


\begin{proof}
We note from Proposition~\ref{prop:02-13} that $K$ has Seifert genus $1$, and that
\[ \{ \dim I^\#(S^3_0(K)), \dim I^\#(S^3_0(K),\mu) \} = \{ 2, 4 \} \]
by Corollary~\ref{cor:nu-0-twisted}.  If $\dim I^\#(S^3_0(K),\mu) = 2$ (which means that $\dim I^\#(S^3_0(K))=4$), then by \eqref{eq:chi} we know that $I^\#_\godd(S^3_0(K),\mu)$ is 1-dimensional, which by Lemma~\ref{lem:fiber-detection} is equivalent to $K$ being fibered.  If $K$ were a trefoil then we would have $(\cinvt(K),r_0(K)) = (\pm1,1)$, so $K$ must be the figure eight instead.

Assuming from now on that $K$ is not the figure eight, we must have $\dim I^\#(S^3_0(K),\mu)=4$ and so
\[ \dim I^\#(S^3_0(K)) = 2. \]
We will use this computation of $I^\#(S^3_0(K))$ to completely determine the instanton homology $I(S^3_{-1}(K))$, and hence the Alexander polynomial $\Delta_K(t)$.  We will then use this to get a lower bound on the dimension of $I(S^3_{-1}(C))$, where $C$ is the $(-1,2)$-cable of $K$, and show that the given values of $(\cinvt(K),r_0(K))$ prevent $I(S^3_{-1}(C))$ from being as large as required, leading to a contradiction.

To begin, Theorem~\ref{thm:fukaya-sum} implies that $I^\#(S^3_{\pm1}(K))$ always has a $\Q_0$ summand, where we use subscripts to denote the $\Z/4\Z$ grading.  The total dimension is 3 and the Euler characteristic is 1 by \eqref{eq:chi}, so there are two more $\Q$ summands, one in an odd grading and one in an even grading.  Any odd element of $\Z/4\Z$ is adjacent to any even element, so we can write
\begin{align*}
I^\#(S^3_{-1}(K)) &\cong \Q_0 \oplus \Q_{k-1} \oplus \Q_k \\
I^\#(S^3_{1}(K)) &\cong \Q_0 \oplus \Q_{m-1} \oplus \Q_m
\end{align*}
for some $k,m \in \Z/4\Z$.

We now examine the surgery exact triangles coming from the $n=-1$ and $n=0$ cases of \eqref{eq:surgery-triangle}, namely
\[ \dots \to I^\#(S^3) \xrightarrow{F_{-1}} I^\#(S^3_{-1}(K)) \xrightarrow{G_0} I^\#(S^3_0(K)) \xrightarrow{H_0} \dots \]
and
\[ \dots \to I^\#(S^3) \xrightarrow{F_0} I^\#(S^3_0(K)) \xrightarrow{G_1} I^\#(S^3_1(K)) \xrightarrow{H_1} \dots. \]
Each of these maps is induced by a 2-handle cobordism, and hence has a degree mod 4 given by \eqref{eq:cobordism-degree}.
These degrees are computed in \cite[Corollary~5.2]{lpcs}: we have
\begin{align*}
\deg(F_{-1}) &\equiv 2, & \deg(G_0) &\equiv 3, & \deg(H_0) &\equiv 2, \\
\deg(F_0) &\equiv 3, & \deg(G_1) &\equiv 2, & \deg(H_1) &\equiv 2.
\end{align*}
Since $I^\#(S^3) \cong \Q_0$ and $\dim I^\#(S^3_0(K)) = 2$ while $\dim I^\#(S^3_{\pm1}(K)) = 3$, the maps $H_0$ and $F_0$ are both zero.  In the first triangle, the image of $F_{-1}$ is a $\Q_2$ summand of $I^\#(S^3_{-1})$, so $k$ must be either $2$ or $3$ and then
\[ I^\#(S^3_{-1}(K)) \cong \begin{cases} \Q_0 \oplus \Q_1 \oplus \Q_2, & k=2 \\ \Q_0 \oplus \Q_2 \oplus \Q_3, & k=3 \end{cases}
\quad\Longrightarrow\quad
I^\#(S^3_0(K)) \cong \begin{cases} \Q_0 \oplus \Q_3, & k=2 \\ \Q_2 \oplus \Q_3, & k=3. \end{cases} \]
In the second triangle, the surjection $H_1$ must send a $\Q_2$ summand of $I^\#(S^3_1(K))$ onto $I^\#(S^3) \cong \Q_0$, so $m$ is either $2$ or $3$.  Then
\[ I^\#(S^3_{1}(K)) \cong \begin{cases} \Q_0 \oplus \Q_1 \oplus \Q_2, & m=2 \\ \Q_0 \oplus \Q_2 \oplus \Q_3, & m=3 \end{cases}
\quad\Longrightarrow\quad
I^\#(S^3_0(K)) \cong \begin{cases}
\Q_2 \oplus \Q_3, & m=2 \\
\Q_1 \oplus \Q_2, & m=3. \end{cases} \]
Since both computations must produce the same value of $I^\#(S^3_0(K))$, we have $k=3$ and $m=2$.

We have shown that $I^\#(S^3_{-1}(K)) \cong \Q_0 \oplus \Q_2 \oplus \Q_3$, so Theorem~\ref{thm:fukaya-sum} now tells us that
\[ \hat{I}(S^3_{-1}(K)) \cong \Q_3 \oplus \Q_7, \]
since $\hat{I}$ is mod 4 periodic.  Since $K$ has genus $1$, the Fr{\o}yshov $h$ invariant satisfies
\[ 0 \leq h(S^3_{-1}(K)) \leq 1 \]
by \cite[Lemma~9]{froyshov}.  If $h(S^3_{-1}(K))$ were zero then we would have $\hat{I}(S^3_{-1}(K)) \cong I(S^3_{-1}(K))$, but by Floer's exact triangle \cite{floer-surgery,braam-donaldson} the latter is isomorphic to $I(S^3_0(K))_\mu$, which has the same dimension as $I^\#(S^3_0(K),\mu) \cong \Q^4$ by Theorem~\ref{thm:fukaya-sum}.  Thus $h(S^3_{-1}(K)) = 1$, and by the definition of the $h$ invariant we conclude that
\[ \dim \hat{I}_j(S^3_{-1}(K)) = \dim I_j(S^3_{-1}(K)) - \begin{cases} 0, & j\neq 1,5 \\ 1, & j=1,5; \end{cases} \]
this follows implicitly from \cite[\S8]{froyshov}.  (It is claimed explicitly in \cite[\S9.3]{scaduto}, but for gradings $0,4$ instead of $1,5$ due to a typo; it is easy to check from the material immediately preceding that claim that the version claimed here is correct.)  Thus
\[ I(S^3_{-1}(K)) \cong \Q_1 \oplus \Q_3 \oplus \Q_5 \oplus \Q_7. \]
In particular we have
\[ -4 = \chi(I(S^3_{-1}(K))) = 2\lambda(S^3_{-1}(K)) = -\Delta''_K(1), \]
where $\lambda$ is the Casson invariant, so if $\Delta_K(t) = at + (1-2a) + at^{-1}$ then $a=2$.  In other words, $\Delta_K(t) = 2t-3+2t^{-1}$.

We now let $C = C_{-1,2}(K)$ be the $(-1,2)$-cable of $K$, represented by a curve in $\partial N(K)$ in the homotopy class $\mu^{-1}\lambda^2$.  Then $C$ has genus $2$ and Alexander polynomial
\[ \Delta_C(t) = \Delta_K(t^2) = 2t^2 - 3 + 2t^{-2}, \]
so that the instanton homology of $S^3_{-1}(C)$ has Euler characteristic
\[ \chi(I(S^3_{-1}(C))) = -\Delta''_C(1) = -16. \]
Applying Floer's exact triangle again, we have 
\begin{equation} \label{eq:0-surgery-C12}
\dim I(S^3_0(C))_\mu = \dim I(S^3_{-1}(C)) \geq 16.
\end{equation}
On the other hand, a result of Gordon \cite[Corollary~7.3]{gordon} says that
\[ S^3_{-1}(C) \cong S^3_{-1/4}(K), \]
and so we apply Theorem~\ref{thm:dim-surgery} with $(\cinvt(K),r_0(K))=(0,2)$ to get
\[ \dim I^\#(S^3_{-1}(C)) = \dim I^\#(S^3_{-1/4}(K)) = 9, \]
hence by applying the exact triangle \eqref{eq:surgery-triangle-0} to $\mirror{C}$ and reversing orientation we conclude that
\[ \dim I^\#(S^3_0(C),\mu) = 8 \text{ or } 10. \]
But now Theorem~\ref{thm:fukaya-sum} says that $\dim I(S^3_0(C))_\mu$ is 8 or 10, contradicting \eqref{eq:0-surgery-C12}.
\end{proof}

\begin{remark} \label{rem:3-1-alexander}
As discussed in Remark~\ref{rem:r0-equals-3}, the knots $K$ with $r_0(K)=3$ have also been classified since the initial appearance of this paper: the only such knots are $T_{2,5}$, $5_2$, and their mirrors.  This was proved by Farber, Reinoso, and Wang \cite{frw-cinquefoil} in the case $\cinvt(K) = \pm 3$, and by Li and Ye \cite[\S8]{li-ye-2} in the case $\cinvt(K)=\pm1$ following our work \cite[Theorem~3.13]{bs-characterizing-5_2} in the Heegaard Floer setting.  These results require substantially different techniques that are beyond the scope of this paper.
\end{remark}

As mentioned in the introduction, Proposition~\ref{prop:almost-cinvt-0} implies Theorem \ref{thm:main-r0-small} as a corollary:

\begin{proof}[Proof of Theorem \ref{thm:main-r0-small}]
Suppose $r_0(K) \leq 2$. Then we must have
\[ (\cinvt(K), r_0(K)) = (0,0) \text{ or } (\pm1,1) \text{ or } (0,2). \] As mentioned in the introduction, the first two cases correspond to the unknot and trefoils, respectively \cite[Proposition~7.12]{bs-lspace}. Proposition~\ref{prop:almost-cinvt-0} implies that $K$ is the figure eight knot in the third case.
\end{proof}

Proposition~\ref{prop:almost-cinvt-0} also allows us to understand when the framed instanton homology of zero-surgery is as small as possible.  In particular, it resolves a question from \cite{bs-concordance} about whether the figure eight knot is ``V-shaped'' or ``W-shaped'', by proving it to be W-shaped.

\begin{theorem} \label{thm:small-zero-surgery}
We have $\dim I^\#(S^3_0(K)) = 2$ if and only if $K$ is the unknot or a trefoil.
\end{theorem}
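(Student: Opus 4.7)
The plan is to characterize both directions of the equivalence by running through the possible values of the pair $(\cinvt(K), r_0(K))$ and using our earlier dimension formulas together with Proposition~\ref{prop:almost-cinvt-0}. The forward direction is easy: the unknot is rationally slice, so by Proposition~\ref{prop:rationally-slice} it is W-shaped, giving $\dim I^\#(S^3_0(K)) = r_0(K) + 2 = 0 + 2 = 2$; and a trefoil has $(\cinvt(K), r_0(K)) = (\pm 1, 1)$ with $\cinvt(K) \neq 0$, so the identity $\dim I^\#(S^3_0(K)) = r_0(K) + |\cinvt(K)|$ from \cite[Proposition~3.3]{bs-concordance} (cited at the start of Section~\ref{sec:zero-surgery}) gives $\dim I^\#(S^3_0(K)) = 2$.

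For the reverse direction, suppose $\dim I^\#(S^3_0(K)) = 2$. I would split into cases based on whether $\cinvt(K)$ vanishes. If $\cinvt(K) \neq 0$, then $r_0(K) + |\cinvt(K)| = 2$, and the constraints $r_0(K) \geq |\cinvt(K)| \geq 1$ together with $r_0(K) \equiv \cinvt(K) \pmod{2}$ force $r_0(K) = |\cinvt(K)| = 1$. Thus $K$ is an instanton L-space knot of genus one, so by \cite[Corollary~7.13]{bs-lspace} (also used in the proof of Proposition~\ref{prop:02-13}), $K$ must be a trefoil.

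If instead $\cinvt(K) = 0$, then Corollary~\ref{cor:nu-0-twisted} tells us that $\dim I^\#(S^3_0(K))$ is either $r_0(K)$ or $r_0(K) + 2$, so the hypothesis $\dim I^\#(S^3_0(K)) = 2$ forces $r_0(K) \in \{0, 2\}$. The subcase $r_0(K) = 0$ gives $(\cinvt(K), r_0(K)) = (0,0)$, which by \cite[Proposition~7.12]{bs-lspace} (as invoked in the proof of Theorem~\ref{thm:main-r0-small}) means $K$ is the unknot. The remaining subcase $r_0(K) = 2$ is exactly where Proposition~\ref{prop:almost-cinvt-0} bites: it would require $K$ to be V-shaped, since $\dim I^\#(S^3_0(K)) = r_0(K) = 2$ rather than $r_0(K) + 2 = 4$. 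But Proposition~\ref{prop:almost-cinvt-0} says $(\cinvt(K), r_0(K)) = (0, 2)$ forces $K$ to be the figure eight and, simultaneously, $\dim I^\#(S^3_0(K)) = 4$, which is a W-shaped configuration. This contradiction rules out $r_0(K) = 2$ and completes the proof.

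The main substantive obstacle here is entirely shouldered by Proposition~\ref{prop:almost-cinvt-0}, which already eliminates V-shaped knots with $(\cinvt, r_0) = (0, 2)$ by a substantial Alexander polynomial and cabling computation. Once that proposition is in hand, the present theorem is just a finite case analysis on $(\cinvt(K), r_0(K))$, and no new geometric input is required.
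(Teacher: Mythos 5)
Your proof is correct and follows essentially the same case analysis as the paper's: split on whether $\cinvt(K)$ vanishes, use the surgery dimension formula and the constraints $r_0 \geq |\cinvt|$ and parity to reduce to $(\cinvt,r_0)\in\{(\pm1,1),(0,0),(0,2)\}$, and then invoke \cite[Corollary~7.13]{bs-lspace} and Proposition~\ref{prop:almost-cinvt-0} to pin down the trefoils, the unknot, and rule out the last case. The only cosmetic difference is that you verify the unknot's zero-surgery dimension via Proposition~\ref{prop:rationally-slice} (W-shapedness of rationally slice knots) rather than citing Scaduto's computation $\dim I^\#(S^1\times S^2)=2$ directly, which is an equivalent and equally acceptable route.
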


\begin{proof}
If $\cinvt(K) \neq 0$, then $\dim I^\#(S^3_0(K)) = 2$ if and only if $r_0(K) + |\cinvt(K)| = 2$, by Theorem~\ref{thm:dim-surgery}.  Since $r_0(K) \geq |\cinvt(K)| \geq 1$, this happens precisely when $r_0(K) = |\cinvt(K)| = \pm1$, so either $K$ or its mirror is a genus-one instanton L-space knot, and the only such knot is the right-handed trefoil \cite[Corollary~7.13]{bs-lspace}.

Assuming now that $\cinvt(K)=0$, we know by Corollary~\ref{cor:nu-0-twisted} that
\[ 2 = \dim I^\#(S^3_0(K)) \in \{ r_0(K), r_0(K)+2 \}, \]
so $r_0(K)$ is either $0$ or $2$.  If $r_0(K)=0$ then $K$ is the unknot, and indeed we have
\[ \dim I^\#(S^1\times S^2) = 2 \]
by \cite[\S7.6]{scaduto}.  But if $r_0(K)=2$ then Proposition~\ref{prop:almost-cinvt-0} says that $K$ would have to be the figure eight, for which $\dim I^\#(S^3_0(K))=4$ anyway.
\end{proof}

\bibliographystyle{alpha}
\bibliography{References}

\end{document}